\tikzstyle{V}=[draw, fill =black, circle, inner sep=0pt, minimum size=2pt]
\newtheorem{theorem}{Theorem}[section]
\newtheorem{proposition}[theorem]{Proposition}
\newtheorem{lemma}[theorem]{Lemma}
\newtheorem{corollary}[theorem]{Corollary}
\newtheorem{conjecture}[theorem]{Conjecture}
\theoremstyle{definition}
\newtheorem{example}[theorem]{Example}
\newtheorem{defi}[theorem]{Definition}
\newtheorem{remark}[theorem]{Remark}
\newcommand{\blue}[1]{\color{blue}{#1}\color{black}}
\newcommand{\red}[1]{\color{red}{#1}\color{black}}
\newcommand{\ds}{\displaystyle}
\def\l{\lambda}
\def\m{\mu}
\def\n{\nu}
\def\a{\alpha}
\def\b{\beta}
\newcommand{\cir}{\circ}
\newcommand{\bu}{\bullet}
\newcommand{\rt}{\red{2}}
\newcommand{\rth}{\red{3}}
\newcommand{\rf}{\red{4}}
\newcommand{\rfi}{\red{5}}\newcommand{\ra}{\red{a}}\newcommand{\rb}{\red{b}}\newcommand{\rp}{\red{p}}
\newcommand{\rs}{\red{6}}
\newcommand{\rse}{\red{7}}
\newcommand{\bth}{\blue{3}}
\newcommand{\bfo}{\blue{4}}
\author[Ballantine]{Cristina Ballantine}
\thanks{This work was partially supported by a grant from the Simons Foundation (\#245997 to C. Ballantine)}
\address{Department of Mathematics and Computer Science, College of the Holy Cross, Worcester, MA 01610}
\email{cballant@holycross.edu}
\author[Orellana]{Rosa Orellana }
\thanks{R. Orellana was partially supported by the NSF Grant DMS-130512}
\address{Department of Mathematics,  Dartmouth College, 6188 Kemeny Hall, Hanover, NH 03755}
\email{Rosa.C.Orellana@Dartmouth.edu}
\title[Schur-Positivity in a Square]{Schur-Positivity in a Square}
\keywords{Schur-positivity, Littlewood-Richardson Coefficients, Kronecker product}
\begin{document}

\begin{abstract}
Determining if a symmetric function is Schur-positive is a prevalent and, in general, a notoriously difficult problem.  In this paper we study the Schur-positivity of a family of symmetric functions.  Given a partition $\nu$, we denote by $\nu^c$ its complement in a square partition $(m^m)$.   We conjecture a  Schur-positivity criterion  for symmetric functions of the form $s_{\mu'}s_{\mu^c}-s_{\nu'}s_{\nu^c}$, where $\nu$ is a partition of weight $|\mu|-1$ contained in $\mu$ and the complement of $\mu$ is taken in the same square partition as the complement of $\nu$. We prove the conjecture in many cases. 

 \end{abstract}
\maketitle


\section*{Introduction}

The ring of symmetric functions has as a basis the Schur functions, $s_\lambda$, indexed by partitions $\lambda$. This basis is of particular importance in representation theory because its elements occur as characters of  the general linear group, $GL_n$, and they correspond to characters of the symmetric group, $S_n$,  via the Frobenius map.  In addition, the Schur functions are representatives of Schubert classes in the cohomology of the Grassmanian.  Often, given a symmetric function, we are  interested in writing it in the Schur basis.   If the coefficients in the Schur basis expansion are all non-negative integers, then the symmetric function corresponds to the character of a representation of $GL_n$ or $S_n$. In this case, the coefficients are simply giving the decomposition of the character in terms of irreducible characters.  We will call a symmetric function {\em Schur-positive} if it is a linear combination of Schur functions with non-negative coefficients.

In recent years there has been increased interest in studying the Schur-positivity of expressions of the form 
\begin{equation}\label{generaleq}
s_\lambda s_\mu - s_\alpha s_\beta.
\end{equation}
See for example \cite{BBR, BM, FFLP, KWW, Ki,  LPP, O}.   These expressions  can also be interpreted as differences of skew Schur functions which have been studied in  \cite{MvW1, MvW2}.  The Schur-positivity of such expressions is equivalent to inequalities between Littlewood-Richardson coefficients.  In this paper, we study the Schur-positivity of  a family of  expressions of this form.  Let $\nu$ and $\mu$ be  partitions such that $|\mu|=|\nu|+1$ and $\nu \subseteq \mu$, and  let $m$ be an integer such that $m \geq \mu_1+\ell(\mu)$.   We denote by  $\nu^c$ (respectively $\mu^c$)  the complement of $\nu$ (respectively $\mu$) in the square partition $(m^m)$ and by $\nu'$ (respectively $\mu'$) the conjugate of $\nu$ (respectively $\mu$).  We are interested in the Schur-positivity of  expressions of the form
\begin{equation} \label{oursymfun}
s_{\mu'}s_{\mu^c} - s_{\nu'}s_{\nu^c}.
\end{equation}
As we explain in section \ref{prod}, these expressions arise in the study of the Kronecker product  of a square shape and a hook shape.   Determining the Schur decomposition of this Kronecker product is of particular interest in a paper by Scharf, Thibon and Wybourne \cite{STW}  on the powers of the Vandermonde determinant and its application to the quantum Hall effect.  In their paper, the $q$-discriminant is expanded as a linear combination of Schur functions where the coefficients are specializations of the Kronecker products mentioned above. 

Central to our paper is a  conjectural criterion  for the Schur-positivity of  (\ref{oursymfun}). \vspace{.05in}

\noindent {\bf Conjecture:} Let $\nu\vdash n$ and $\mu \vdash n+1$  be a partitions  such that $\nu\subseteq \mu$. Then, if complements are taken in a large enough square, $s_{\mu'}s_{\mu^c}-s_{\nu'}s_{\nu^c}$ is Schur-positive if and only if the following conditions are satisfied. 
\begin{enumerate}
\item[(C1)] $\nu$ or $\nu'$ is of the form $\beta+(s^s)+\alpha$, where $\b$ and $\a$ are partitions such that,  \begin{enumerate}

\item[(i)] if  $\beta_1=i$ , then $\b$  contains $(i^{s+1}, i, i-1, i-2, \ldots, 1)$,

\item[(ii)] $s\neq0$ if and only if $\a \neq \emptyset$.

\end{enumerate}

\item[(C2)]  If $\nu$ (respectively $\nu'$) is a partition $\beta+(s^s)+\alpha$ as in (C1), then $\mu$ (respectively $\mu'$) is the partition $\beta+(s^s,1)+\alpha$.
\end{enumerate}


If $\nu$ satisfies (C1), we say that $\nu$ is of \textbf{\textit{type $\mathbf{1}$}}  if $s=0$ and of \textbf{\textit{type $\mathbf{2}$}} if $s\geq 1$. 

In \cite{PM} McNamara gives several necessary conditions for Schur-positivity. However, most articles  considering special cases of (\ref{generaleq}) focus on sufficient conditions.    The strength of our conjecture lies in the fact that is it a criterion. In this article, we prove the conjecture in many cases. 


All results of this paper are  valid if all complements are taken in a large enough rectangle instead of a square. Since the proofs for complements in rectangles do not add any new insight, to simplify the exposition, we present the results with complements taken in a large enough square. 

The paper is organized as follows. In section 1 we review the notation and basic facts about partitions and Schur functions and discuss products of the form $s_{\mu'}s_{\mu^c}$. In section 2 we discuss symmetry and stability properties of expressions of the form (\ref{oursymfun}), we introduce the main conjecture, and discuss type possibilities for a partition and its conjugate. In section 3 we study partitions of type 1 and show that the conjecture holds if the partition contains a large rectangle of the same width as the partition. Moreover, we show that the conjecture holds for all partitions of type 1 of width at most  four. In section 3 we also conjecture the smallest partition in lexicographic order appearing in the decomposition of $s_{\nu'}s_{\nu^c}$ with $\nu$ of type 1. The validity of this conjecture  implies that  condition (C2)  on  $\mu$ is necessary for the Schur-positivity of $s_{\mu'}s_{\mu^c}-s_{\nu'}s_{\nu^c}$  if $\nu$ satisfies (C1). In section 4 we establish several properties of  partitions of type 2 and prove the conjecture for partitions of type 2 with $\b_1=0$ or $1$. In section 5 we consider the failure of Schur-positivity for the symmetric function  (\ref{oursymfun}) if $\nu$ does not satisfy (C1) (\textit{i.e.}, $\nu$ is neither of type 1, nor of type 2). In section 6 we offer some final remarks, including unsuccessful strategies we considered in our attempt to prove the conjecture.

\subsection*{Acknowledgements:} The first author would like to thank Vyjayanthi Chari, Thomas Lam, Ronald King, and Arun Ram for helpful discussions. We are particularly grateful to Nicolas Thi\'ery for his help calculating the Schur decomposition in large examples. These calculations helped us  formulate the lex-minimality  conjecture in section 3.3.

\section{Preliminaries}

\subsection{Partitions and Schur functions}

For details and proofs of the contents presented here see \cite{ma} or
\cite[Chapter 7]{stanleyEC2}.  Let $n$ be a non-negative integer. A \emph{partition} of  $n$
is a weakly decreasing sequence of non-negative integers,
$\l:=(\l_1,\l_2,\cdots,\l_{\ell})$, such that $|\l|=\sum \l_i=n$. We write $\l\vdash
n$ to mean $\l$ is a partition of $n$. The nonzero integers $\lambda_i$ are called
the \emph{parts} of $\l$. We identify a partition with its \emph{Young diagram},
i.e. the array of left-justified squares (boxes) with $\l_1$ boxes in the first row,
$\l_2$ boxes in the second row, and so on. The rows are arranged in matrix form from
top to bottom. By the box in position $(i,j)$ we mean the box  in the $i$-th row and
$j$-th column of $\l$. The \emph{length} of $\l$, $\ell(\l)$, is the number of rows
in the Young diagram. 

\begin{figure}[ht]
\centering
{\tiny \yng(6,4,3,1)}
\caption{ $\l=(6,4,3,1), \ \ \ \ell(\l)=4, \ \ \ |\l|=14$}
\end{figure}
Given two partitions $\l$ and $\m$, we write $\m\subseteq \l$ if and only if
$\ell(\m) \leq \ell(\l)$ and $\m_i\leq \l_i$ for all $1\leq i\leq \ell(\m)$. If $\m
\subseteq \l$, we denote by $\l/\m$ the skew shape obtained by removing the boxes
corresponding to $\m$ from $\l$.

\begin{figure}[ht]
\centering
{\tiny \young(:::\hfil\hfil\hfil,:\hfil\hfil\hfil,:\hfil\hfill,\hfil)}
\caption{  $\l/\m$ with $ \l=(6,4,3,1)$  and  $\m=(3,1,1)$ }
\end{figure}

 For any two Young diagrams (or skew shapes) $\l$ and $\m$, we denote by  $\l \times \m$  any skew diagram consisting only of the diagram $\l$ followed by the diagram $\mu$ such that $\l$ and $\mu$ have no common edges.  That is, the rows (respectively  columns) of $\l$ are above the rows (respectively to the right of  the columns) of $\m$.  Note that $\l$ and $\m$ could have one common corner, \textit{i.e},  the furthest northeast corner of $\m$ can connect with the lowest southwest corner of $\l$.

\begin{figure}[ht]
\centering
{\tiny \young(::::\hfil\hfil,::::\hfil,\hfil\hfil\hfil,\hfil\hfil)} 
\caption{ $\l\times \m$ with $\l=(2,1)$ and $\m=(3,2)$.}
\end{figure}
 
A \emph{semi-standard Young tableau} (SSYT) \emph{of shape} $\l/\m$ is a filling of
the boxes in the Young diagram of the skew shape $\l/\m$ with positive integers so that the numbers
weakly increase in each row from left to right and strictly increase in each column
from top to bottom. The \emph{type} of a SSYT $T$ is the sequence of non-negative
integers $(t_1,t_2,\ldots)$, where $t_i$ is the number of labels $i$ in $T$. 
The \textit{superstandard tableau} of a partition $\l$ is the SSYT of shape $\l$ and type $\l$. 
\begin{figure}[ht]
\centering
{\tiny \young(:::2234,::1446,:1366,224)} 
\caption{  SSYT of shape $\l/\m=(7,6,5,3)/(3,2,1)$ and type $(2,4,2,4,0,3)$.}
\end{figure}

\vskip 0in Given a SSYT $T$ of shape $\l/\m$ and type $(t_1,t_2,\ldots)$, we define
its \emph{weight}, $w(T)$, to be the monomial obtained by replacing each $i$ in $T$
by $x_i$ and taking the product over all boxes, i.e.
$w(T)=x_1^{t_1}x_2^{t_2}\cdots$. For example, the weight of the SSYT in Figure 4 is
$x_1^2x_2^4x_3^2x_4^4x_6^3$. The skew Schur function $s_{\l/\m}$ is defined
combinatorially by the formal power series
$$s_{\l/\m}= \sum_T w(T),$$
where the sum runs over all SSYTs of shape $\l/\m$. To obtain the usual Schur
function one sets $\m =\emptyset$.  It
follows directly from the combinatorial definition of Schur functions  that $s_{\l
\times \m} = s_{\l}s_{\m}$. \vskip 0in The space of homogeneous symmetric
functions of degree $n$ is denoted by $\Lambda^n$. A basis for this space is given
by the Schur functions $\{ s_\l\,|\, \lambda\vdash n\}$. The Hall inner product on
$\Lambda^n$ is denoted by $\langle \ , \ \rangle$ and it is defined by
$$\langle s_{\l},s_\m\rangle=\delta_{\l\m},$$
 where  $\delta_{\l\m}$  denotes the Kronecker delta. 
 
 
 \subsection{The Littlewood-Richardson Rule}
 The \emph{Littlewood-Richardson
coefficients} are defined via the Hall inner product on symmetric functions as
follows:
$$c_{\m\, \n}^{\l} := \langle s_\l, s_\m s_\n \rangle=\langle s_{\l/\m},
s_\n \rangle. $$ That is, skewing is the adjoint operator of multiplication with
respect to this inner product. The Littlewood-Richardson coefficients can be 
computed using the Littlewood-Richardson rule.  Before presenting the
rule we need to recall two additional notions. A \emph{lattice permutation} is a
sequence $a_1a_2\cdots a_n$ such that in any initial factor $a_1a_2\cdots a_j$, for each $1\leq i \leq n$, the
number of labels $i$ is at least  the number of labels $(i+1)$. For
example $11122321$ is a lattice permutation. The \emph{reverse reading word} of a
tableau is the sequence of entries of $T$ obtained by reading the entries from right
to left and top to bottom, starting with the first row.
 \vskip 0.05in

 \noindent \textbf{Example:} The reverse reading word of the tableau {\tiny
\Yvcentermath1$\young(::12,3568,479)$} is $218653974$. \vskip 0.05in 

We denote by $LR(\l/\mu,\nu)$ the collection of SSYTs of shape $\l/\m$ and
type $\n$ whose reverse reading word is a lattice permutation. We refer to a tableau in $LR(\l/\mu,\nu)$ as a \textit{Littlewood-Richardson tableau} of shape $\l/\m$ and
type $\n$. The \emph{Littlewood-Richardson rule} states that the Littlewood-Richardson
coefficient $c_{\m\, \n}^{\l}$ is equal to the cardinality of $LR(\l/\m,\nu)$.  \vskip 0in 

We denote by $c_{\m\,  \n\,  \eta}^{\l}$ the multiplicity of $s_{\l}$ in the product $s_{\m}s_{\n}s_{\eta}$, \textit{i.e.}, $c_{\m\,  \n\,  \eta}^{\l}=\langle s_{\l}, s_{\m}s_{\n}s_{\eta}\rangle$.

In the next remark, we draw attention to a condition on tableaux in $LR(\l/\m,\nu)$ which will be used later in our discussion. First we introduce a definition.

Recall that given a partition $\nu$, a \textit{horizontal strip} in $\nu$ is a skew shape $\nu/\psi$ (for some partition $\psi\subseteq \nu$) such that no two boxes in $\nu/\psi$ are in the same column. 

\begin{remark}\label{horizontal-strips} The lattice permutation condition on a tableau  $T\in LR(\l/\m,\nu)$ imposes the following condition. Let $\nu^{(1)}=\nu$. The labels in the last row of $T$ must form a horizontal strip in the superstandard tableau of $\nu^{(1)}$. Denote this horizontal strip by $h_1$. Note that if we list the labels in the last row of $T$ in order from left to right, they appear in $h_1$ in that order when read from right to left. Let $\nu^{(2)}$ be the partition obtained from $\nu^{(1)}$ by removing the boxes of $h_1$. Then, the labels in the second to last row of $T$ form a horizontal strip in the superstandard tableau of $\nu^{(2)}$. This process continues recursively. We denote by $h_j$ the horizontal strip in $\nu^{(j)}$ containing the labels of the $j$-th row form the bottom in $T$ and let $\nu^{(j+1)}$ be the partition obtained form $\nu^{(j)}$ by removing the boxes of $h_j$. Then, the labels in the $(j+1)$-st row form the bottom in $T$ must form a horizontal strip in the superstandard tableau of $\nu^{(j+1)}$. 

\end{remark}

We refer to the condition in Remark \ref{horizontal-strips} as \textit{the rows of $T$  form horizontal strips in the superstandard tableau of $\nu$}.

{\small
\begin{figure}[ht]
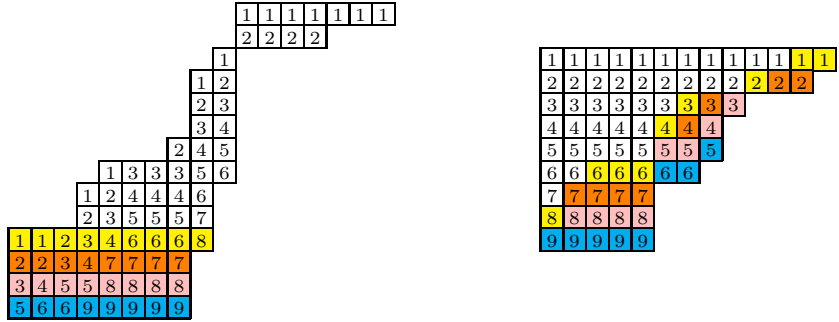

\centering
\ytableausetup{smalltableaux}
\begin{ytableau}
\none &\none &\none &\none& \none &\none &\none &\none& \none& \none &  *(white) 1&  *(white) 1&  *(white) 1&  *(white) 1&  *(white) 1&  *(white) 1&  *(white) 1\\
\none &\none &\none &\none& \none &\none &\none &\none& \none& \none & *(white) 2& *(white) 2& *(white) 2& *(white) 2\\
\none &\none &\none &\none& \none &\none &\none &\none& \none &  *(white) 1\\
\none &\none &\none& \none &\none &\none &\none& \none &  *(white) 1 & *(white) 2\\
\none &\none &\none& \none &\none &\none &\none& \none &  *(white) 2 & *(white) 3\\
\none &\none &\none& \none &\none &\none & \none &\none & *(white) 3 & *(white) 4\\
\none &\none &\none& \none &\none & \none &\none  &  *(white) 2 & *(white) 4 & *(white) 5\\
\none &\none& \none &\none & *(white) 1 & *(white) 3 & *(white) 3 &  *(white) 3 & *(white) 5 & *(white) 6\\
\none& \none &\none &*(white) 1 & *(white) 2 & *(white) 4 & *(white) 4 &  *(white) 4 & *(white) 6 \\
 \none &\none &\none &*(white) 2 & *(white) 3 & *(white) 5 & *(white) 5 &  *(white) 5 & *(white) 7 \\
*(yellow) 1 &*(yellow) 1 &*(yellow) 2 &*(yellow) 3 & *(yellow) 4 & *(yellow) 6 & *(yellow) 6 &  *(yellow) 6 & *(yellow) 8\\
*(orange) 2 &*(orange) 2 &*(orange) 3 &*(orange) 4 & *(orange) 7 & *(orange) 7 & *(orange) 7 &  *(orange) 7 \\
*(pink) 3 &*(pink) 4 &*(pink) 5 &*(pink) 5 & *(pink) 8 & *(pink) 8 & *(pink) 8 &  *(pink) 8  \\
*(cyan) 5 &*(cyan) 6 &*(cyan) 6 &*(cyan) 9 & *(cyan) 9 & *(cyan) 9 & *(cyan) 9 &  *(cyan) 9  \\
\end{ytableau}
\hspace{50pt}
\begin{ytableau}
\none \\
\none \\
 *(white) 1 & *(white) 1 & *(white) 1 & *(white) 1 & *(white) 1 & *(white) 1 & *(white) 1 & *(white) 1 & *(white) 1 & *(white) 1 & *(white) 1 &*(yellow) 1 & *(yellow) 1\\
*(white) 2 & *(white) 2 & *(white) 2 & *(white) 2 & *(white) 2 & *(white) 2 & *(white) 2 & *(white) 2 & *(white) 2 &*(yellow) 2 & *(orange) 2 & *(orange) 2 \\
*(white) 3 & *(white) 3 & *(white) 3 & *(white) 3 & *(white) 3 & *(white) 3 & *(yellow) 3 & *(orange) 3 & *(pink) 3  \\
*(white) 4 & *(white) 4 & *(white) 4 & *(white) 4 & *(white) 4 & *(yellow) 4 & *(orange) 4 & *(pink) 4 \\
*(white) 5 & *(white) 5 &*(white) 5 &*(white) 5 &*(white) 5 &*(pink) 5 &*(pink) 5 &*(cyan) 5 \\
*(white) 6 & *(white) 6 & *(yellow) 6 & *(yellow) 6 & *(yellow) 6 & *(cyan) 6 & *(cyan) 6 \\
*(white) 7 & *(orange) 7 & *(orange) 7 & *(orange) 7 & *(orange) 7 \\ 
*(yellow) 8 & *(pink) 8 & *(pink) 8 & *(pink) 8 & *(pink) 8 \\
*(cyan) 9 & *(cyan) 9 & *(cyan) 9 & *(cyan) 9 & *(cyan) 9 \\
\end{ytableau}
\caption{ A Tableau $T$  and the corresponding superstandard tableau}\label{fig:h-s}
\end{figure}}

Figure \ref{fig:h-s} shows, on the left, a  tableau $T\in LR(\l/\mu, \nu)$ with $\l=(17, 14, 10^6, 9^3, 8^3)$, $\mu=(10^2, 9, 8^3, 7, 4, 3^2)$, and $\nu=(13, 12, 9, 8^2, 7, 5^3)$. On the right, it shows the superstandard tableau of $\nu$. 
The colors indicate  the labels in some of the bottom rows of $T$ and the corresponding horizontal strips in the superstandard tableau of $\nu$. 

\subsection{The product of the conjugate and the complement of a partition}\label{prod}

Let $\nu=(\nu_1, \nu_2, \ldots, \nu_{\ell})$ be a partition. We denote by $\nu'$ the \emph{conjugate partition} of $\nu$, i.e., the partition whose rows are the columns of $\nu$. If $D$ is a skew-diagram,  $D^*$ denotes $D$ rotated by $180^\circ$. 


If $(m^m)$ is a square partition and $\mu \subseteq (m^m)$, the \emph{complement partition} of $\mu$ in $(m^m)$, denoted $\mu^c$, is the partition   $((m^m)/\mu)^*$. Whenever we need to emphasize $m$, we write $\mu^{c,m}$ for $\mu^c$.  See Figure \ref{fig:complement} for the shape of the complement. 

\begin{figure}[ht]
\centering
\begin{picture}(130,110)
\put(0,0){\line(0,0){100}}
\put(0,0){\line(1,0){70}}
\put(0,100){\line(1,0){110}}
\put(80,15){\line(0,0){15}}
\put(80,30){\line(1,0){10}}
\put(90,30){\line(0,0){10}}
\put(90,40){\line(1,0){20}}
\put(110,40){\line(0,0){60}}
\put(70,0){\line(0,0){15}}
\put(70,15){\line(1,0){10}}
\put(95,15){$\mu^*$}
\put(-13,45){$m$}
\put(-7,0){\line(0,0){35}}
\put(-10,0){\line(1,0){6}}
\put(-7,60){\line(0,0){40}}
\put(-10,100){\line(1,0){6}}
\put(50,105){$m$}
\put(0,107){\line(1,0){45}}
\put(0,104){\line(0,0){6}}
\put(65,107){\line(1,0){45}}
\put(110,104){\line(0,0){6}}
\end{picture}
\caption{The complement of $\mu$, i.e., $\mu^c$.}
\label{fig:complement}
\end{figure}
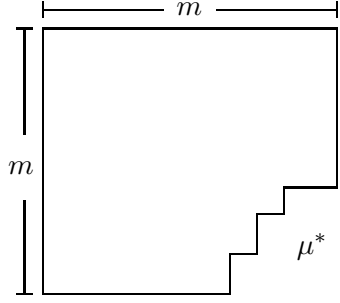

Given a partition $\m$ and a square $(m^m)$, we will be interested in the product $s_{\m'}s_{\m^c}$.

Our interest in this particular product has its origin in  the Kronecker product of a hook and a square shape, \text{i.e.}, $s_{(n-k,1^k)}\ast s_{(m^m)}$.  We briefly explain this connection.

Using Pieri's rule and induction, one can easily see that $$\ds s_{(n-k,1^k)} =  \sum_{i=0}^k (-1)^i s_{(n-k+i)} s_{(1^{k-i})}.$$ Then, using Littlewood's formula \cite{L}, we obtain
\[(s_{(n-k+i)} s_{(1^{k-i})})\ast s_{(m^m)} = \sum_{\mu\vdash n-k+i, \nu\vdash k-i} c_{\mu\, \nu}^{(m^m)} s_\mu s_{\nu'}.\]
Since $\ds c_{\mu,\nu}^{(m^m)} = \left\{ \begin{array}{ll}1 & \mbox{ \ if \ } \mu, \nu\subset (m^m) \mbox{\  and \ }\nu =\mu^c\\ 0 & \mbox{\ else.} \end{array} \right.$, we have 
\[s_{(n-k,1^k)}\ast s_{(m^t)}=\sum_{\eta \vdash n} \left( \sum_{i=0}^k (-1)^{k-i}\sum_{\mu \vdash i}c_{\mu'\mu^c}^{\eta} \right)s_{\eta}.\]

The  focus of this article article is the study of Schur-positivity of differences of  products of the form $s_{\m'}s_{\m^c}$. 

Notice that if we choose $m$ large, then the partitions  occurring in the expansion of $s_{\m'}s_{\m^c}$ have the shape shown in Figure \ref{fig:shape-of-partitions}.  That is, if $c_{\m'\m^c}^\lambda\neq 0$, then $\lambda = (\eta^c + \gamma, \sigma)$, where $\eta \subseteq \mu$, $|\gamma|+|\sigma|=|\eta|$ and the complement of $\eta$ is taken in $(m^m)$.

\begin{figure}[ht]
\centering
\begin{picture}(130,110)
\put(0,-10){\line(0,0){110}}
\put(0,20){\line(1,0){80}}
\put(0,100){\line(1,0){140}}
\put(80,20){\line(0,0){10}}
\put(80,30){\line(1,0){10}}
\put(90,30){\line(0,0){10}}
\put(90,40){\line(1,0){20}}
\put(110,40){\line(0,0){60}}
\put(95,25){$\eta^*$}
\put(140,100){\line(0,-1){10}}
\put(140,90){\line(-1,0){5}}
\put(135,90){\line(0,-1){10}}
\put(135,80){\line(-1,0){10}}
\put(125,80){\line(0,-1){10}}
\put(125,70){\line(-1,0){15}}
\put(120,85){$\gamma$}
\put(0,-10){\line(1,0){10}}
\put(10,-10){\line(0,1){5}}
\put(10,-5){\line(1,0){10}}
\put(20,-5){\line(0,1){10}}
\put(20,5){\line(1,0){10}}
\put(30,5){\line(0,1){15}}
\put(10,5){$\sigma$}
\put(-13,60){$m$}
\put(-7,20){\line(0,0){35}}
\put(-10,20){\line(1,0){6}}
\put(-7,70){\line(0,0){30}}
\put(-10,100){\line(1,0){6}}
\put(50,105){$m$}
\put(0,107){\line(1,0){45}}
\put(0,104){\line(0,0){6}}
\put(65,107){\line(1,0){45}}
\put(110,104){\line(0,0){6}}
\end{picture}
\caption{ The shape of the partition $\lambda=(\eta^c+\gamma, \sigma)$ for large $m$.}
\label{fig:shape-of-partitions}
\end{figure}
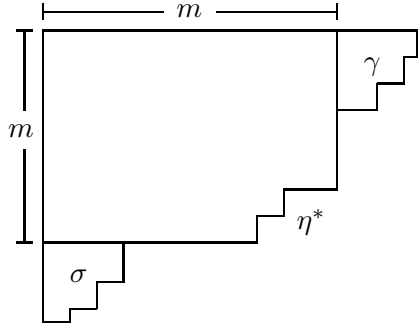

\begin{remark}\label{stability:product}
In order for $\lambda$ to occur with  nonzero coefficient  in the expansion of $s_{\m'}s_{\m^c}$, there must be LR tableaux of shape $\lambda/\m^c$ and type $\m'$.  With the notation of Figure \ref{fig:shape-of-partitions}, we must have  $\ell(\gamma)\leq \mu_1$ and $\sigma_1\leq \ell(\m)$.  In addition,  $\eta_1\leq \m_1$ and $\ell(\eta) \leq \ell(\m)$.   Therefore, if $m\geq \mu_1+\ell(\mu)$,  the skew diagram $\lambda/{\m^c}$ is the disjoint union of three distinct diagrams, $\gamma, (\mu/\eta)^*$, and $\sigma$, and the label fillings in Littlewood-Richardson tableaux do not depend on $m$.  Hence, the decomposition of the product $s_{\m'}s_{\m^c}$ becomes stable, i.e. does not change  for  $m\geq \mu_1+\ell(\mu)$.
\end{remark}

For the rest of the article  we  assume that complements are taken in a square $(m^m)$ with  $m\geq \mu_1+\ell(\mu)$.   Thus, we  assume that all partitions $\lambda$ such that $\ds c_{\mu'\mu^c}^{\l}\neq 0$, have the shape in Figure \ref{fig:shape-of-partitions}, \textit{i.e.}, $\lambda = (\eta^c + \gamma, \sigma)$, where $\eta \subseteq \mu$ and $|\gamma|+|\sigma|=|\eta|$.

The following proposition describes a symmetry property for the expansion of the product $s_{\m'}s_{\m^c}$. 

\begin{proposition} \label{sym} Let $\mu$ be a partition and $m\in \mathbb{Z}$ with $m \geq \mu_1+\ell(\mu)$. 
Consider the partitions $\lambda=(\eta^c+\gamma,\sigma)$ and $\overline{\lambda}=(\eta^c+\sigma,\gamma)$ with $\eta, \gamma, \sigma$ as above. The coefficients of $s_\lambda$ and $s_{\overline{\lambda}}$ in $s_{\mu'}s_{\mu^c}$  are equal. Moreover, these coefficients are equal to $c^{\mu'}_{\gamma\, \sigma\,(\mu/\eta)^*}$.

\end{proposition}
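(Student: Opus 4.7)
The plan is to reduce the coefficient $c^\lambda_{\mu',\mu^c}$ to a triple product of Schur functions via skewing, using the disjoint-union structure of $\lambda/\mu^c$ that was already hinted at in Remark \ref{stability:product}. The first step is to verify that when $m \geq \mu_1 + \ell(\mu)$, the skew diagram $\lambda/\mu^c$ decomposes as $\gamma \times (\mu/\eta)^* \times \sigma$ in the sense of the $\times$-operation defined in Section 1.1. Concretely, since $\eta \subseteq \mu$ gives $\mu^c \subseteq \eta^c$ as partitions in $(m^m)$, removing $\mu^c$ from the top $m$ rows of $\lambda = (\eta^c+\gamma,\sigma)$ leaves three regions: the $\gamma$ part sits in the columns strictly to the right of column $m$ (since $(\eta^c)_i = m$ for all $i \leq \ell(\gamma) \leq \mu_1 \leq m-\ell(\mu)$), the ``middle'' part $\eta^c/\mu^c$ is congruent to $(\mu/\eta)^*$ and occupies rows $m-\ell(\mu)+1$ through $m$ within columns $\leq m$, and $\sigma$ occupies rows $m+1$ through $m+\ell(\sigma)$ within columns $\leq \sigma_1 \leq \ell(\mu) \leq m-\mu_1$. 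A quick bookkeeping using the inequalities $\ell(\gamma) \leq \mu_1$, $\sigma_1 \leq \ell(\mu)$, $\eta_1 \leq \mu_1$, and $m \geq \mu_1 + \ell(\mu)$ verifies that these three pieces are arranged in the proper northeast-to-southwest order (with at most corner contact) to assemble as $\gamma \times (\mu/\eta)^* \times \sigma$.

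Once the geometric decomposition is in hand, the combinatorial definition of skew Schur functions gives
\[
s_{\lambda/\mu^c} \;=\; s_\gamma \, s_{(\mu/\eta)^*} \, s_\sigma,
\]
since fillings of a $\times$-union of skew shapes factor as independent fillings of the components. Combining this with the adjointness relation
\[
c^\lambda_{\mu',\mu^c} \;=\; \langle s_\lambda, s_{\mu'} s_{\mu^c}\rangle \;=\; \langle s_{\lambda/\mu^c}, s_{\mu'}\rangle,
\]
we obtain
\[
c^\lambda_{\mu',\mu^c} \;=\; \langle s_\gamma \, s_{(\mu/\eta)^*} \, s_\sigma,\, s_{\mu'}\rangle \;=\; c^{\mu'}_{\gamma\,\sigma\,(\mu/\eta)^*},
\]
extending the notation for the triple coefficient to allow a skew shape in the third position (it is simply $\langle s_{\mu'},\, s_\gamma s_\sigma s_{(\mu/\eta)^*}\rangle$). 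This proves the ``moreover'' part.

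For the symmetry claim, I apply the same argument with $\gamma$ and $\sigma$ interchanged: the partition $\overline{\lambda} = (\eta^c+\sigma,\gamma)$ still lies in the stable range (all the relevant size constraints depend only on the multiset $\{\gamma,\sigma\}$ and on $\eta$ and $\mu$), so the identical decomposition yields $\overline{\lambda}/\mu^c = \sigma \times (\mu/\eta)^* \times \gamma$ and hence
\[
c^{\overline{\lambda}}_{\mu',\mu^c} \;=\; \langle s_\sigma\, s_{(\mu/\eta)^*}\, s_\gamma,\, s_{\mu'}\rangle \;=\; c^{\mu'}_{\gamma\,\sigma\,(\mu/\eta)^*}
\]
by commutativity of the product of symmetric functions. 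Comparing the two equalities gives the desired $c^\lambda_{\mu',\mu^c} = c^{\overline{\lambda}}_{\mu',\mu^c}$.

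The only step that requires any real care is the geometric verification of the disjoint-union decomposition of $\lambda/\mu^c$, and in particular checking that $\overline{\lambda}$ really is a partition and lies in the same stable range; but this is the content of Remark \ref{stability:product}, and after that the proposition follows by a direct application of the adjointness formula and commutativity. I expect no substantive obstacle.
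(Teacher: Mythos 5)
Your proposal is correct and follows essentially the same route as the paper: decompose $\lambda/\mu^c$ into the disjoint union $\gamma\times(\mu/\eta)^*\times\sigma$ (which the paper delegates to Remark \ref{stability:product}), apply adjointness of skewing, and use commutativity of the product to identify the coefficient with $c^{\mu'}_{\gamma\,\sigma\,(\mu/\eta)^*}$ and to swap $\gamma$ and $\sigma$ for $\overline{\lambda}$. The only difference is that you spell out the geometric bookkeeping for the decomposition explicitly, which the paper leaves to the earlier remark.
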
 
\begin{proof} If $\l$, $\mu$ and $m$  are as in the statement of the proposition, we have $$\ds \langle s_{\lambda},s_{\mu'}s_{\mu^c}\rangle= \langle s_{\lambda/\mu^c},s_{\mu'} \rangle=\langle s_{\gamma}s_{(\mu/\eta)^*}s_{\sigma},s_{\mu'}\rangle=\langle s_{\gamma}s_{\sigma}s_{(\mu/\eta)^*},s_{\mu'}\rangle.$$ Thus, $\ds \langle s_{\lambda},s_{\mu'}s_{\mu^c}\rangle=c^{\mu'}_{\gamma\, \sigma\, (\mu/\eta)^*}$. Moreover,  $$\label{switch} \langle s_{\lambda},s_{\mu'}s_{\mu^c}\rangle=\langle s_{\sigma}s_{(\mu/\eta)^*}s_{\gamma},s_{\mu'}\rangle= \langle s_{\overline{\lambda}/\mu^c},s_{\mu'} \rangle= \langle s_{\overline{\lambda}},s_{\mu'}s_{\mu^c}\rangle.$$ 
 \end{proof}  

A consequence of Proposition \ref{sym} is the following combinatorial interpretation of the coefficient of $s_{\l}$ in $s_{\mu'}s_{\mu^c}$. 

\begin{corollary}\label{cor-sym} If $\l$, $\mu$, and $m$ are as in Proposition \ref{sym}, then $$\langle s_{\l},s_{\mu'}s_{\mu^c}\rangle=|LR(\gamma\times\sigma\times (\mu/\eta)^*, \mu')|.$$
\end{corollary}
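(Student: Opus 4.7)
The plan is to leverage Proposition \ref{sym}, which already equates $\langle s_\l, s_{\mu'}s_{\mu^c}\rangle$ with $c^{\mu'}_{\gamma\,\sigma\,(\mu/\eta)^*}$, interpreted as the coefficient of $s_{\mu'}$ in the product $s_\gamma \, s_\sigma \, s_{(\mu/\eta)^*}$. All that remains is to identify this coefficient with the claimed cardinality of Littlewood-Richardson tableaux on the disjoint skew shape $\gamma \times \sigma \times (\mu/\eta)^*$.

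The key tool is the multiplicativity identity $s_{D_1}\,s_{D_2} = s_{D_1 \times D_2}$ for (possibly skew) Young diagrams, which is recalled in the preliminaries and is immediate from the combinatorial definition of (skew) Schur functions as generating functions over semistandard fillings. Applied twice, this gives
$$s_\gamma \, s_\sigma \, s_{(\mu/\eta)^*} \;=\; s_{\gamma \times \sigma \times (\mu/\eta)^*}.$$
Taking the Hall inner product with $s_{\mu'}$ and using adjointness of multiplication and skewing then yields
$$c^{\mu'}_{\gamma\,\sigma\,(\mu/\eta)^*} \;=\; \langle s_{\gamma \times \sigma \times (\mu/\eta)^*},\, s_{\mu'}\rangle.$$

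Finally, I would apply the Littlewood-Richardson rule to the skew shape $D := \gamma \times \sigma \times (\mu/\eta)^*$, viewed as a genuine skew diagram $L/M$ by placing $\gamma$, $\sigma$, and $(\mu/\eta)^*$ disjointly along the antidiagonal of a sufficiently large rectangle (with at most shared corners, as allowed in the definition of $\times$). The rule gives $\langle s_{L/M}, s_{\mu'}\rangle = |LR(L/M, \mu')|$; since the boxes of $L/M$ are literally the boxes of $\gamma \sqcup \sigma \sqcup (\mu/\eta)^*$ and the reverse reading word on $L/M$ reads each component row-by-row from right to left, top to bottom, the set of valid fillings is exactly $LR(\gamma \times \sigma \times (\mu/\eta)^*, \mu')$, completing the argument. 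There is no genuine obstacle: the corollary is a chaining of Proposition \ref{sym}, the multiplicativity of Schur functions under $\times$, and the standard Littlewood-Richardson rule.
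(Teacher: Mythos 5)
Your proof is correct and follows exactly the route the paper intends: the corollary is stated there without a written proof, as an immediate consequence of Proposition \ref{sym} combined with the identity $s_{\l\times\m}=s_{\l}s_{\m}$ from the preliminaries and the Littlewood--Richardson rule applied to the disjoint skew shape. Your explicit verification that the reverse reading word and the fillings of the concatenated shape match the definition of $LR(\gamma\times\sigma\times(\mu/\eta)^*,\mu')$ is exactly the detail the paper leaves implicit.
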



\section{Schur-positivity  for a difference of products}

\bigskip

We say that a symmetric function $f$ is {\em Schur-positive} if every  coefficient in the expansion of $f$ as a linear combination of Schur functions is a  non-negative integer.  That is, if 
\[f= \sum_{\lambda} c_\lambda s_\lambda,\]
then $c_\lambda\geq 0$ for all $\lambda$.  

In this paper we are interested in the Schur-positivity of expressions of the form 
\[s_{\mu'} s_{\mu^c} - s_{\nu'}s_{\nu^c},\] 
where $\nu^c$ and $\mu^c$ are complements in the same square, $\nu \subseteq \mu$,  and $\mu \vdash |\nu|+1$.  We  introduce another definition to simplify the statements of the theorems.

\begin{defi}We say that $\mu$ \emph{covers} $\nu$,  if $\nu \subseteq \mu$, $|\mu|=|\nu|+1$, $\nu^c$ and $\mu^c$ are complements in the same square,  and $s_{\mu'}s_{\mu^c}-s_{\nu'}s_{\nu^c}$ is Schur-positive. \end{defi} 

For the rest of the article, whenever complements of different partitions appear in the same expression, it is understood that they are taken in the same square. 

In the proof of the following proposition we use the \emph{involution} $\omega$ on the ring of symmetric functions, i.e., $\omega: \Lambda\rightarrow \Lambda$. This is defined on the Schur functions by $\omega(s_\lambda) = s_{\lambda'}$.  It is a well-known fact that $\omega$ is an isometry with respect to the Hall inner product.  For  details  see \cite[Section 7.6]{stanleyEC2}. In particular, $\omega$ is an algebra endomorphism. This leads immediately to the following result.

\begin{proposition}\label{conj}
If $\mu$ covers $\nu $, then $\mu'$ covers $\nu'$.

\end{proposition}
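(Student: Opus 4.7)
The plan is to apply the involution $\omega$ directly to the difference $s_{\mu'}s_{\mu^c}-s_{\nu'}s_{\nu^c}$ and recognize the result as the difference associated to the pair $(\mu',\nu')$. The only conditions in the definition of ``covers'' that involve more than containment and cardinality are (a) that $\nu'\subseteq \mu'$ with $|\mu'|=|\nu'|+1$ (which are immediate from $\nu\subseteq \mu$ and $|\mu|=|\nu|+1$), and (b) that the Schur-difference is Schur-positive. So the whole argument reduces to two small facts: conjugation commutes with complementation in a square, and $\omega$ preserves Schur-positivity.

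First I would verify the commutation of conjugation and complementation. Writing $R$ for $180^\circ$ rotation and $T$ for transposition, one has $T((m^m))=(m^m)$, and $T$ and $R$ commute as operations on subdiagrams of the square. Since $\eta^c=R((m^m)/\eta)$, one obtains
\[
(\eta^c)'=TR((m^m)/\eta)=RT((m^m)/\eta)=R((m^m)/\eta')=(\eta')^c,
\]
and in particular $(\nu^c)'=(\nu')^c$ and $(\mu^c)'=(\mu')^c$. Note that if complements are taken in a square of side $m\geq \mu_1+\ell(\mu)$, the same square works for $\mu'$ and $\nu'$, so the ``same-square'' requirement is also inherited.

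Next I would apply $\omega$. Because $\omega$ is an algebra homomorphism with $\omega(s_\lambda)=s_{\lambda'}$, I compute
\[
\omega\bigl(s_{\mu'}s_{\mu^c}-s_{\nu'}s_{\nu^c}\bigr)=s_{\mu}s_{(\mu^c)'}-s_{\nu}s_{(\nu^c)'}=s_{(\mu')'}s_{(\mu')^c}-s_{(\nu')'}s_{(\nu')^c},
\]
using the identity from the previous step. Finally, if $s_{\mu'}s_{\mu^c}-s_{\nu'}s_{\nu^c}=\sum_\lambda c_\lambda s_\lambda$ with $c_\lambda\geq 0$, then its image under $\omega$ is $\sum_\lambda c_\lambda s_{\lambda'}$, which has the same (non-negative) coefficients; hence $s_{(\mu')'}s_{(\mu')^c}-s_{(\nu')'}s_{(\nu')^c}$ is Schur-positive, so $\mu'$ covers $\nu'$.

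There is no real obstacle here; the only point one must be careful about is the identification $(\eta^c)'=(\eta')^c$, which depends on complements being taken in the (self-conjugate) square $(m^m)$. If a rectangle $(m^n)$ with $m\neq n$ were used instead, the complement of $\mu'$ would naturally live in the transposed rectangle $(n^m)$; fortunately, the paper assumes $m\geq \mu_1+\ell(\mu)\geq \mu'_1+\ell(\mu')$, so a single square works for both pairs and the argument goes through cleanly.
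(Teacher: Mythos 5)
Your proof is correct and follows the same route as the paper, which derives the proposition immediately from the fact that $\omega$ is an algebra endomorphism sending $s_\lambda$ to $s_{\lambda'}$. Your additional verification that $(\eta^c)'=(\eta')^c$ for complements in a square is a detail the paper leaves implicit, but the underlying argument is identical.
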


\subsection{Symmetry and Stability} In this section we describe two properties of the coefficients  occurring in the expansion of 
$s_{\mu'}s_{\mu^c}- s_{\nu'}s_{\nu^c}$.   

\begin{proposition}[Symmetry] \label{cor-sym2} Let $\mu$ be a partition and $m\in \mathbb{Z}$ with $m \geq\mu_1+\ell(\mu)$.  If $\lambda=(\eta^c+\gamma,\sigma)$ and $\overline{\lambda}=(\eta^c+\sigma,\gamma)$,   then the coefficients of $s_\lambda$ and $s_{\overline{\lambda}}$ in  $s_\mu's_{\mu^c}-s_\nu's_{\nu^c}$ are equal. 
\end{proposition}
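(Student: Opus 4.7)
The plan is to apply Proposition~\ref{sym} independently to each of the two products appearing in the difference $s_{\mu'}s_{\mu^c} - s_{\nu'}s_{\nu^c}$ and then subtract. The statement is essentially a linearity consequence of the symmetry already established for a single product $s_{\mu'}s_{\mu^c}$, so the proof should be very short.

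First I would observe that the decomposition $\lambda=(\eta^c+\gamma,\sigma)$ described in Figure~\ref{fig:shape-of-partitions} is determined by $\lambda$ and $m$ alone: given $\lambda$, one reads off $\sigma$ from the rows of $\lambda$ below the $m$-th row, reads off $\gamma$ from the portions of the first $m$ rows that stick out beyond column $m$, and the remaining portion inside the $m\times m$ square is $\eta^c$, from which $\eta$ is recovered. In particular, the same decomposition is meaningful whether we pair $\lambda$ against $s_{\mu'}s_{\mu^c}$ or against $s_{\nu'}s_{\nu^c}$, and the partition $\overline{\lambda}=(\eta^c+\sigma,\gamma)$ is unambiguous.

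Next, since the hypothesis $m\geq \mu_1+\ell(\mu)$ together with the standing assumption $\nu\subseteq\mu$ (so that $\nu_1\leq \mu_1$ and $\ell(\nu)\leq \ell(\mu)$) also gives $m\geq \nu_1+\ell(\nu)$, the stability condition of Remark~\ref{stability:product} is satisfied for both $\mu$ and $\nu$. Hence Proposition~\ref{sym} applies to both products, yielding
\[
\langle s_{\lambda},s_{\mu'}s_{\mu^c}\rangle=\langle s_{\overline{\lambda}},s_{\mu'}s_{\mu^c}\rangle
\qquad\text{and}\qquad
\langle s_{\lambda},s_{\nu'}s_{\nu^c}\rangle=\langle s_{\overline{\lambda}},s_{\nu'}s_{\nu^c}\rangle.
\]
Subtracting these two identities gives
\[
\langle s_{\lambda},s_{\mu'}s_{\mu^c}-s_{\nu'}s_{\nu^c}\rangle
=\langle s_{\overline{\lambda}},s_{\mu'}s_{\mu^c}-s_{\nu'}s_{\nu^c}\rangle,
\]
which is exactly the claim.

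There is no real obstacle here; the only thing to be careful about is that the decomposition $(\eta,\gamma,\sigma)$ used in forming $\overline{\lambda}$ is the same whether one views $\lambda$ as a summand of $s_{\mu'}s_{\mu^c}$ or of $s_{\nu'}s_{\nu^c}$. As noted above, this is automatic because the decomposition depends only on $\lambda$ and $m$, not on the particular partition $\mu$ or $\nu$ we are pairing against. If $\lambda$ fails to lie in the support of one of the two products (for instance, if $\eta\not\subseteq\nu$), then the corresponding inner product simply vanishes on both sides, and the identity continues to hold.
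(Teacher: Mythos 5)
Your proof is correct and is essentially the same as the paper's, which simply notes that by Proposition~\ref{sym} both products $s_{\mu'}s_{\mu^c}$ and $s_{\nu'}s_{\nu^c}$ are individually symmetric and hence so is their difference. Your added checks (that $m\geq\mu_1+\ell(\mu)$ implies $m\geq\nu_1+\ell(\nu)$ via $\nu\subseteq\mu$, and that the triple $(\eta,\gamma,\sigma)$ depends only on $\lambda$ and $m$) are sensible elaborations of details the paper leaves implicit.
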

\begin{proof}
 If $m \geq\mu_1+\ell(\mu)$, by Proposition \ref{sym},  both products $s_{\m'}s_{\m^c}$ and $s_{\n'}s_{\n^c}$ are symmetric.  
\end{proof}

For  fixed $\nu$ and $\mu$, the differences $s_{\mu'}s_{\mu^c}-s_{\nu'}s_{\nu^c}$  satisfy the stability property given in the following proposition. 

\begin{proposition}[Stability] \label{stab} If $\nu$ and $\mu$ are partitions such that $\nu \subseteq \mu$ and $|\mu|=|\nu|+1$, then $s_{\mu'}s_{\mu^c}-s_{\nu'}s_{\nu^c}$ is stable in the sense that if   $m_1, m_2 \geq \mu_1+\ell(\mu)$, then $$\langle s_{(\eta^{c,{m_1}}+\gamma, \sigma)},  s_{\mu'}s_{\mu^{c,{m_1}}}-s_{\nu'}s_{\nu^{c,{m_1}}}\rangle=\langle s_{(\eta^{c,{m_2}}+\gamma, \sigma)},  s_{\mu'}s_{\mu^{c,{m_2}}}-s_{\nu'}s_{\nu^{c,{m_2}}}\rangle,$$ for all $\eta, \gamma, \sigma$ with $|\gamma|+|\sigma|=|\eta|$. 
\end{proposition}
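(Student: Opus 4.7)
The plan is to show separately that each of $\langle s_{(\eta^{c,m}+\gamma, \sigma)}, s_{\mu'}s_{\mu^{c,m}}\rangle$ and $\langle s_{(\eta^{c,m}+\gamma, \sigma)}, s_{\nu'}s_{\nu^{c,m}}\rangle$ is independent of $m$ for $m \geq \mu_1+\ell(\mu)$, and then subtract.

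For the $\mu$-coefficient, Corollary \ref{cor-sym} gives
\[
\langle s_{(\eta^{c,m}+\gamma, \sigma)}, s_{\mu'}s_{\mu^{c,m}}\rangle = |LR(\gamma\times\sigma\times(\mu/\eta)^*, \mu')|
\]
whenever $\eta \subseteq \mu$, and by Remark \ref{stability:product} it vanishes otherwise. The right-hand side depends only on the shapes $\gamma, \sigma, \mu, \eta$, so it is manifestly $m$-independent.

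For the $\nu$-coefficient, the difficulty is that our parameter $\eta$ is chosen with $\eta \subseteq \mu$ and may not satisfy $\eta \subseteq \nu$. I would resolve this by first observing that, in the stable range, the decomposition of $\lambda$ as $(\tilde\eta^{c,m}+\tilde\gamma, \tilde\sigma)$ with $\tilde\eta \subseteq (m^m)$, $\ell(\tilde\gamma) \leq m-\ell(\tilde\eta)$, and $\tilde\sigma_1 \leq m-\tilde\eta_1$ is \emph{unique}: $\tilde\sigma$ consists of the rows of $\lambda$ below row $m$, $\tilde\gamma$ is the row-overflow past column $m$ in the top $m$ rows, and $\tilde\eta^{c,m}$ is the top-left remainder inside $(m^m)$. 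Hence the triple $(\eta,\gamma,\sigma)$ attached to $\lambda$ is canonical, and by Remark \ref{stability:product} applied to $\nu$, $\langle s_\lambda, s_{\nu'}s_{\nu^{c,m}}\rangle = 0$ unless this canonical $\eta$ satisfies $\eta \subseteq \nu$; when $\eta \subseteq \nu$, Corollary \ref{cor-sym} applied to $\nu$ yields
\[
\langle s_\lambda, s_{\nu'}s_{\nu^{c,m}}\rangle = |LR(\gamma\times\sigma\times(\nu/\eta)^*, \nu')|,
\]
again independent of $m$. Subtracting the two $m$-independent expressions then proves the proposition; note that $\nu \subseteq \mu$ implies $m \geq \mu_1+\ell(\mu) \geq \nu_1+\ell(\nu)$, so Corollary \ref{cor-sym} does apply to $\nu$.

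The main obstacle is justifying the uniqueness of the canonical decomposition and verifying that $m \geq \mu_1+\ell(\mu)$ is the precise condition needed. Concretely, this hypothesis gives $\ell(\gamma) \leq \mu_1 \leq m-\ell(\eta)$ and $\sigma_1 \leq \ell(\mu) \leq m-\eta_1$, ensuring that the rows where $\gamma$ overflows, the rows where $\eta^{c,m}$ falls below $m$, and the $\sigma$-rows lie in three mutually disjoint row-ranges. This same numerical condition is what makes the three pieces $\gamma$, $(\mu/\eta)^*$ (or $(\nu/\eta)^*$), and $\sigma$ of the skew shape $\lambda/\mu^{c,m}$ (resp.\ $\lambda/\nu^{c,m}$) geometrically disconnected, which is precisely what licenses the factorization of skew Schur functions underpinning Corollary \ref{cor-sym}.
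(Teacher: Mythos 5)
Your proposal is correct and takes essentially the same route as the paper: the paper's proof likewise invokes Remark \ref{stability:product} and Corollary \ref{cor-sym} to note that $\langle s_{\lambda},s_{\mu'}s_{\mu^c}\rangle=|LR(\gamma\times\sigma\times (\mu/\eta)^*, \mu')|$ is independent of $m$ (and similarly for $\nu$), then subtracts. You supply somewhat more detail than the paper does --- in particular the uniqueness of the decomposition $\lambda=(\eta^{c,m}+\gamma,\sigma)$ and the case $\eta\subseteq\mu$ but $\eta\not\subseteq\nu$ --- but these are elaborations of the same argument, not a different one.
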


\begin{proof} This follows directly from Remark \ref{stability:product} and Corollary \ref{cor-sym} since $\langle s_{\l},s_{\mu'}s_{\mu^c}\rangle=|LR(\gamma\times\sigma\times (\mu/\eta)^*, \mu')|$ does not depend on $m$ if $m \geq \mu_1+\ell(\mu)$.
\end{proof}

The following example shows that the bound $\mu_1+\ell(\mu)$ on $m$ in Proposition \ref{stab} is sharp. 

\begin{example}
Suppose $\mu=(3,2,1)$ and $\nu=(3,2)$. Thus, $\mu_1=3$ and $\ell(\mu)=3$. Let $\eta=(2,2,1), \gamma=(1,1,1)$, and $\sigma=(1,1)$. One can use Maple to check that, if $m_1=6$, then $\ds \langle s_{(\eta^{c,{m_1}}+\gamma, \sigma)},  s_{\mu'}s_{\mu^{c,{m_1}}}-s_{\nu'}s_{\nu^{c,{m_1}}}\rangle=1$. However, if $m_2=5$,  $(\eta^{c,{m_2}}+\gamma, \sigma)$ is not a partition and thus  there is no Schur function $ s_{(\eta^{c,{m_2}}+\gamma, \sigma)}$ in the expansion of $s_{\m'}s_{\m^{c,m_2}} - s_{\n'}s_{\n^{c,m_2}}$. 

\end{example}


\subsection{Main Conjecture} 
We now introduce our main conjecture. It gives a characterization of partitions $\nu$ and $\mu$ for which  $s_{\m'}s_{\m^c} - s_{\n'}s_{\n^c}$ is Schur-positive (assuming complements are taken in sufficiently large squares). 


\begin{conjecture}\label{conjecture}
Let $\nu\vdash n$ and $\mu \vdash n+1$  be a partitions  such that $\nu\subseteq \mu$. Suppose complements are taken in $(m^m)$ with $m\geq \mu_1+\ell(\mu)$.  Then, $\mu$ covers $\nu$ if and only if the following conditions are satisfied. 
\begin{enumerate}
\item[\text{\emph{(C1)}}] $\nu$ or $\nu'$ is of the form $\beta+(s^s)+\alpha$, where $\b$ and $\a$ are partitions such that,  \begin{enumerate}

\item[(i)] if  $\beta_1=i$ , then $\b$  contains $(i^{s+1}, i, i-1, i-2, \ldots, 1)$,

\item[(ii)] $s\neq0$ if and only if $\a \neq \emptyset$.

\end{enumerate}

\item[\text{\emph{(C2)}}]  If $\nu$ (respectively $\nu'$) is a partition $\beta+(s^s)+\alpha$ as in \text{\emph{(C1)}}, then $\mu$ (respectively $\mu'$) is the partition $\beta+(s^s,1)+\alpha$.
\end{enumerate}
\end{conjecture}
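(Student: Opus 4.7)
My plan is to split the biconditional and attack the two directions with very different tools. By Proposition \ref{conj} applied to the involution $\omega$, I may reduce to the case where $\nu$ itself (rather than $\nu'$) is of the form $\beta+(s^s)+\alpha$. I would then translate all coefficient computations into the combinatorial form given by Corollary \ref{cor-sym}, so that for $\lambda = (\eta^c+\gamma,\sigma)$ the relevant difference becomes
\[ |LR(\gamma\times\sigma\times(\mu/\eta)^*, \mu')| - |LR(\gamma\times\sigma\times(\nu/\eta)^*, \nu')|, \]
and use the symmetry of Proposition \ref{cor-sym2} to halve the casework.

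For sufficiency, that is, (C1) and (C2) forcing Schur-positivity, I would handle type $1$ ($s=0$, $\alpha=\emptyset$) first, where $\mu$ is obtained from $\nu$ by adding a single cell just beneath the staircase portion of $\beta$. The goal is to construct, for each $\lambda$ and each $\eta$, an explicit injection from $LR(\gamma\times\sigma\times(\nu/\eta)^*,\nu')$ into $LR(\gamma\times\sigma\times(\mu/\eta)^*,\mu')$. The injection inserts a label corresponding to the extra cell of $\mu'/\nu'$ into a controlled position in a given LR-tableau, then propagates local adjustments to preserve the lattice-word condition encoded in Remark \ref{horizontal-strips}. For type $2$, the same scheme applies, but the middle block $(s^s)$ forces a preliminary normalization of each tableau before the insertion step can be defined coherently.

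For necessity I plan to argue contrapositively in two stages. If $\nu$ fails (C1), I would exhibit, for every possible cover $\mu$, a witness partition $\lambda$ of symmetric form $(\eta^c+\gamma,\gamma)$ (whose multiplicity in $s_{\nu'}s_{\nu^c}$ is boosted via Proposition \ref{cor-sym2}), with $\eta$ chosen close to $\nu$ so as to amplify the deficiency in $\beta$'s staircase or the incompatibility between $s$ and $\alpha$ in (C1)(ii). If $\nu$ satisfies (C1) but $\mu$ is not the cover prescribed by (C2), I would establish the lex-minimality conjecture of Section $3.3$: the lex-smallest partition in the support of $s_{\nu'}s_{\nu^c}$ is determined by $\nu$ and occurs with coefficient one. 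Any other cover $\mu$ would then shift this lex-minimum so that it disappears from the support of $s_{\mu'}s_{\mu^c}$, forcing a negative coefficient of $s_\lambda$ for $\lambda$ equal to the $\nu$-minimum.

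The main obstacle is the lex-minimality conjecture itself, on which the necessity of (C2) rests: pinning down the lex-minimum and proving it has coefficient one requires a delicate LR analysis sensitive to the exact staircase pattern inside $\beta$, and a uniform argument across both types seems elusive. A secondary difficulty is the type $2$ sufficiency when $\beta_1 \geq 2$, where the interplay of the $(s^s)$ block with the staircase appears to defeat purely local injections; I expect that a global rectification step, akin to jeu-de-taquin, will be needed before the insertion procedure can be made coherent.
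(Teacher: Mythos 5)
The statement you are asked to prove is the paper's \emph{main conjecture}, and the paper itself does not prove it: it only establishes special cases (type~1 partitions containing the rectangle $(\nu_1^{\nu_1-1})$ or of width at most four, type~2 partitions with $\beta_1\in\{0,1\}$, and various non-covering results via corner-symmetry). Your plan follows essentially the same strategy the authors use for those cases --- reduce via $\omega$ and Corollary~\ref{cor-sym} to counting LR tableaux, build explicit insertion-type injections for sufficiency, and exhibit witness partitions (ultimately resting on the lex-minimality conjecture) for necessity --- but it does not close any of the gaps that stop the paper short of a full proof.

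Concretely, three steps in your outline are named but not supplied, and each is precisely where the known arguments break down. First, the ``insertion with local adjustments'' injection for general type~1 partitions is not known to exist: the paper's example with $\nu=(5,5,5,3,3,1)$ shows the natural generalization of the bumping algorithm of Theorems~\ref{algorithm} and~\ref{algorithm2} produces a non-SSYT, so asserting that the lattice condition can be ``preserved by local adjustments'' begs the question; the same applies to your proposed ``global rectification step'' for type~2 with $\beta_1\ge 2$, which you yourself flag as conjectural. Second, the lex-minimality conjecture, on which your entire necessity argument for (C2) rests, is open; the paper proves it only when $\nu$ contains a large rectangle or has width four. Third, for $\nu$ failing (C1) you propose to ``exhibit a witness partition'' for every possible $\mu$, but the paper only manages this for restricted families (partitions $\beta+(s^s)$ with $\alpha=\emptyset$, and partitions with both $\nu$ and $\nu'$ corner-symmetric); no uniform construction is known. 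As written, your proposal is a programme that reproduces the paper's partial results rather than a proof of the conjecture, and its unproven ingredients coincide exactly with the obstacles the authors identify in their final remarks.
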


Suppose $\nu$ satisfies (C1).  We say that $\nu$ is of \textbf{\textit{type $\mathbf{1}$}}  if $s=0$ and of \textbf{\textit{type $\mathbf{2}$}} if $s\geq 1$. 

 Note that in (C1) it is possible to have $\b=\emptyset$. 

In Figure \ref{fig:type1}, we show  a general partition $\nu$ of type 1 together with the corresponding cover $\mu$. Inside the diagram of $\nu$ we show (in blue) the smallest partition of type 1 of  width equal to the width of $\nu$.  In Figure \ref{fig:type2} we show a general partition of type 2 together with the corresponding cover $\mu$.  In both cases the red box represents the  box added to $\nu$ to obtain $\mu$.  We will see below that this box is unique. 

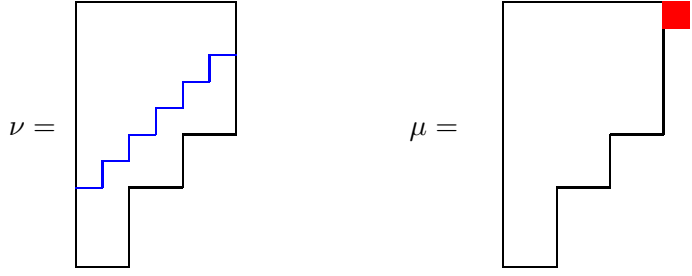
\begin{figure}[ht]
\centering
\begin{picture}(300,100)
\put(20,50){$\nu = $}
\put(45,0){\line(0,0){100}}
\put(45,100){\line(1,0){60}}
\put(105,50){\line(0,0){50}}
\put(95,80){\blue{\line(1,0){10}}}
\put(95,70){\blue{\line(0,0){10}}}
\put(85,70){\blue{\line(1,0){10}}}
\put(85,60){\blue{\line(0,0){10}}}
\put(75,60){\blue{\line(1,0){10}}}
\put(75,50){\blue{\line(0,0){10}}}
\put(65,50){\blue{\line(1,0){10}}}
\put(65,40){\blue{\line(0,0){10}}}
\put(55,40){\blue{\line(1,0){10}}}
\put(55,30){\blue{\line(0,0){10}}}
\put(45,30){\blue{\line(1,0){10}}}
\put(85,50){\line(1,0){20}}
\put(85,30){\line(0,0){20}}
\put(65,30){\line(1,0){20}}
\put(65,0){\line(0,0){30}}
\put(45,0){\line(1,0){20}}
\put(170,50){$\mu=$}
\put(205,0){\line(0,0){100}}
\put(205,100){\line(1,0){60}}
\put(265,50){\line(0,0){50}}
\put(245,50){\line(1,0){20}}
\put(245,30){\line(0,0){20}}
\put(225,30){\line(1,0){20}}
\put(225,0){\line(0,0){30}}
\put(205,0){\line(1,0){20}}
\multiput(265,90)(.5,0){20}{\red{\line(0,0){10}}}
\end{picture}
\caption{ A partition $\nu$ of type 1  and the corresponding partition $\mu$}
\label{fig:type1}
\end{figure}

\begin{figure}[ht]
\centering
\begin{picture}(300,100)
\put(20,50){$\nu=$}
\put (45,0){\line(0,0){100}}
\put(45,100){\line(1,0){110}}
\put(85,40){\line(0,0){60}}
\put(125,60){\line(0,0){40}}
\put(85,60){\line(1,0){40}}
\put(155,90){\line(0,0){10}}
\put(145,90){\line(1,0){10}}
\put(145,80){\line(0,0){10}}
\put(135,80){\line(1,0){10}}
\put(135,70){\line(0,0){10}}
\put(125,70){\line(1,0){10}}
\put(75,40){\line(1,0){10}}
\put(75,20){\line(0,0){20}}
\put(65,20){\line(1,0){10}}
\put(65,10){\line(0,0){10}}
\put(55,10){\line(1,0){10}}
\put(55,0){\line(0,0){10}}
\put(45,0){\line(1,0){10}}
\put(133,85){$\alpha$}
\put(95,80){$(s^s)$}
\put(60,70){$\beta$}
\put(180,50){$\mu = $}

\put (205,0){\line(0,0){100}}
\put(205,100){\line(1,0){110}}
\put(245,40){\line(0,0){60}}
\put(285,60){\line(0,0){40}}
\put(245,60){\line(1,0){40}}
\put(315,90){\line(0,0){10}}
\put(305,90){\line(1,0){10}}
\put(305,80){\line(0,0){10}}
\put(295,80){\line(1,0){10}}
\put(295,70){\line(0,0){10}}
\put(285,70){\line(1,0){10}}
\put(235,40){\line(1,0){10}}
\put(235,20){\line(0,0){20}}
\put(225,20){\line(1,0){10}}
\put(225,10){\line(0,0){10}}
\put(215,10){\line(1,0){10}}
\put(215,0){\line(0,0){10}}
\put(205,0){\line(1,0){10}}
\put(293,85){$\alpha$}
\put(255,80){$(s^s)$}
\put(220,70){$\beta$}
\put(245,50){\red{\line(1,0){10}}}
\multiput(245,50)(.5,0){20}{\red{\line(0,0){10}}}
\end{picture}
\caption{ A partition $\nu$ of type 2 and the corresponding partition $\mu$ }
\label{fig:type2}
\end{figure}


The next two lemmas list some properties of partitions satisfying (C1). They follow directly from condition (C1).
\begin{lemma}  \label{properties of beta} Suppose $\nu=\b+(s^s)+\alpha$ satisfies the conditions in (C1).  Then, 
\begin{enumerate}
\item[(i)]  $\b_1=\b_2=\ldots =\b_{s+1}=\b_{s+2}$ and $\b_j\geq s+\b_1-j+2$ for $j \geq s+2$. 

\item[(ii)]  $\b'_j\geq s+\b_1-j+2$ for  $1\leq j \leq \b_1$.

\item[(iii)] If $\b \neq \emptyset$, then $\ell(\b)\geq s+\b_1+1$. 

\item[(iv)] If $\l=(\eta^c+\gamma, \sigma)$ is such that $\ds c^{\l}_{\nu'\nu^c}\neq 0$ and $\nu$ is not self-conjugate, then $\eta\neq \emptyset$.

\item[(v)] If $s=0$ (\textit{i.e.}, $\nu$ is of type 1), and $\l=(\eta^c+\gamma, \sigma)$ is such that $\ds c^{\l}_{\nu'\nu^c}\neq 0$, then  $\eta\neq \emptyset$.
\end{enumerate}

\end{lemma}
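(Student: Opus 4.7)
My plan is to handle (i)--(iii) by direct unpacking of the containment condition in (C1)(i), then invoke Proposition \ref{sym} for (iv), and finally derive (v) from (iv) by ruling out self-conjugacy for type 1 partitions.

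First, set $i = \beta_1$. The contained shape $(i^{s+1}, i, i-1, \ldots, 1)$ has exactly $(s+1)+i$ parts, of which the first $s+2$ all equal $i$. Since $\beta$ contains this shape and $\beta_j \le \beta_1 = i$ for all $j$, I immediately get $\beta_1 = \beta_2 = \cdots = \beta_{s+2} = i$. For $j \ge s+2$ the $j$-th part of the contained shape is $i - (j - s - 2) = s + \beta_1 - j + 2$, so $\beta_j \ge s + \beta_1 - j + 2$. Conclusion (iii) is then immediate since $\ell(\beta) \ge s + i + 1$. For (ii), I would count $\beta'_j = \#\{k : \beta_k \ge j\}$ for $1 \le j \le i$: the first $s+1$ rows always contribute, and the bound from (i) guarantees that rows $s+2$ through $s + i - j + 2$ also satisfy $\beta_k \ge j$, yielding the total $s + \beta_1 - j + 2$.

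For (iv) the approach is to apply Proposition \ref{sym}, which gives $\langle s_\lambda, s_{\nu'} s_{\nu^c}\rangle = c^{\nu'}_{\gamma,\,\sigma,\,(\nu/\eta)^*}$. Setting $\eta = \emptyset$ forces $|\gamma| + |\sigma| = 0$, hence $\gamma = \sigma = \emptyset$, and the coefficient reduces to $\langle s_{\nu'}, s_{\nu^*}\rangle = \langle s_{\nu'}, s_\nu\rangle = \delta_{\nu',\nu}$ (using the standard fact $s_{D^*} = s_D$ for any skew shape $D$). When $\nu \ne \nu'$ this vanishes, so any $\lambda$ with nonzero coefficient must come from $\eta \ne \emptyset$.

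For (v) it suffices, by (iv), to show that a type 1 partition is never self-conjugate. When $s = 0$, condition (C1)(ii) forces $\alpha = \emptyset$, so $\nu = \beta$; then $\nu_1 = \beta_1 = i$, while by (iii), $\nu'_1 = \ell(\beta) \ge i + 1 > \nu_1$. Thus $\nu$ is not self-conjugate, and (iv) delivers (v). The main obstacle I anticipate is bookkeeping: the indexing around the ``rectangle'' of height $s+1$ glued to the staircase must be handled carefully at the juncture $j = s+2$, and one has to verify that the $j$-th part of the contained shape is really described by the single formula $\max(i,\, s + \beta_1 - j + 2)$ across the relevant range without off-by-one errors.
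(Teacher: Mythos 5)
Your proof is correct and supplies exactly the direct verification that the paper omits: the authors simply assert that the lemma ``follows directly from condition (C1),'' and your unpacking of the contained shape $(i^{s+1}, i, i-1,\ldots,1)$ for (i)--(iii), together with the reduction of (iv) to $\langle s_{\nu'}, s_{\nu^*}\rangle = \delta_{\nu'\nu}$ via Proposition \ref{sym} and the deduction of (v) from (iii) and (iv), is the intended argument. One minor slip in your closing remark: the $j$-th part of the contained shape is $\min(i,\, s+\beta_1-j+2)$, not $\max$, but this does not affect your actual proof.
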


In the next lemma we consider the effect of removing rows or columns from a diagram on the type of the partition.

\begin{lemma} \label{remove} \ 

\begin{enumerate}\item[(i)]  If $\nu$ is a partition of type 1, then so is any partition $\nu^{(k)}$ obtained from $\nu$ by removing its first $k$ columns, $1\leq k \leq \nu_1-1$. 

\item[(ii)] If $\nu=\b+(s^s)+\a$ is a partition of type 2, then so is any partition $\nu^{(k)}$ obtained from $\nu$ by removing its first $k$ columns, $1\leq k \leq \b_1$. 

\item[(iii)]  If $\nu=\b+(s^s)+\a$ is a partition of type 2, then the partition $\nu_{(s)}$ obtained from $\nu$ by removing its first $s$ rows is of type 1.

\end{enumerate}

\end{lemma}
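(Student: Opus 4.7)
The plan is to verify each part by unpacking the type-1 or type-2 decomposition of $\nu$ and checking that the conditions of (C1) transfer to the partition obtained after the removal. In all three parts, Lemma \ref{properties of beta} is the main tool: it records exactly how the staircase $(\beta_1^{s+1}, \beta_1, \beta_1-1, \ldots, 1)$ sits inside $\beta$, which is what governs whether (C1) continues to hold. The key structural observation is that, in a type-2 decomposition $\nu = \beta + (s^s) + \alpha$, the $(s^s)$- and $\alpha$-pieces lie entirely within the top $s$ rows and strictly to the right of the first $\beta_1$ columns, so the column- and row-removal operations act cleanly on the $\beta$-piece alone.

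For part (i), $\nu = \beta$ contains the staircase $(i^2, i-1, \ldots, 1)$ with $i = \nu_1$. Removing the first $k$ columns subtracts $k$ from each positive part, so $\nu^{(k)}_1 = i - k \geq 1$, and the contained staircase becomes $((i-k)^2, i-k-1, \ldots, 1)$ after dropping nonpositive rows. This is exactly what is required for $\nu^{(k)}$ to be of type 1.

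For part (ii), since $k \leq \beta_1$, the first $k$ columns of $\nu$ lie inside the $\beta$-piece and the remaining pieces are untouched, giving $\nu^{(k)} = \beta^{(k)} + (s^s) + \alpha$ with $\beta^{(k)}$ obtained from $\beta$ by removing its first $k$ columns. Subtracting $k$ from each inequality in Lemma \ref{properties of beta}(i) shows that $\beta^{(k)}_1 = \cdots = \beta^{(k)}_{s+2} = \beta_1 - k$ and $\beta^{(k)}_j \geq s + (\beta_1 - k) - j + 2$ for $j \geq s+2$; since $s$ and $\alpha$ are unchanged, $\nu^{(k)}$ satisfies (C1) with the same type. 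The boundary case $k = \beta_1$ gives $\beta^{(k)} = \emptyset$ with $\beta^{(k)}_1 = 0$, which makes (C1)(i) vacuous while (C1)(ii) persists.

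For part (iii), deleting the first $s$ rows of $\nu$ removes precisely the portion containing $\alpha$ and the $(s^s)$-square, leaving $\nu_{(s)} = (\beta_{s+1}, \beta_{s+2}, \ldots, \beta_{\ell(\beta)})$. Lemma \ref{properties of beta}(i) then gives $(\nu_{(s)})_1 = (\nu_{(s)})_2 = \beta_1$ and $(\nu_{(s)})_j \geq \beta_1 - j + 2$ for $j \geq 2$, so $\nu_{(s)}$ contains the staircase $(\beta_1, \beta_1, \beta_1 - 1, \ldots, 1)$. Taking $\beta' = \nu_{(s)}$, $s' = 0$, $\alpha' = \emptyset$ exhibits $\nu_{(s)}$ as type 1. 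The argument throughout is a direct structural verification; the only mild subtlety is the book-keeping for the extremal case in part (ii).
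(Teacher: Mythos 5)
Your proof is correct, and it is exactly the direct verification the paper intends: the paper states this lemma without proof, remarking only that it "follows directly from condition (C1)," and your argument supplies that routine check via Lemma \ref{properties of beta}. The only (harmless) degenerate case neither you nor the paper addresses is $\beta=\emptyset$ in part (iii), where $\nu_{(s)}=\emptyset$.
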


Conjecture \ref{conjecture} gives a criterion for  exactly when a partition covers another partition.  It might not be obvious  that  the conjecture implies that a partition $\nu$ satisfying condition (C1) is covered by a unique partition.  In what follows we prove this result. 

\begin{proposition}\label{lem:possible-types}\ 

\begin{enumerate}\item[(a)] If $\nu$ is a partition of type 2,  then  the  expression given in \text{\emph{(C1)}} is unique. 

\noindent \item[(b)]  If $\nu$ is a partition of type 1, then $\nu$ is not of type 2.  \textit{I.e.}, $\nu$ cannot be of type 1 and type 2 simultaneously.\end{enumerate} 

\end{proposition}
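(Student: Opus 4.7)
My plan is to show that in any type~2 presentation $\nu=\beta+(s^s)+\alpha$ the integer $s$ is forced by $\nu$ alone; once $s$ is fixed, Lemma~\ref{properties of beta}(i) dictates $\beta_j=\nu_j$ for $j\ge s+1$, $\beta_1=\cdots=\beta_s=\nu_{s+1}$, and $\alpha_j=\nu_j-\nu_{s+1}-s$ for $1\le j\le s$, so $(\beta,s,\alpha)$ is uniquely recovered and (a) follows. For (b) I then play the same structural features of a hypothetical type~2 decomposition off against the type~1 hypothesis on $\nu$, forcing contradictory inequalities between $\nu_1$ and $\nu_{s+1}$.

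For (a), I would assume toward a contradiction that $\nu=\beta+(s^s)+\alpha=\beta'+(t^t)+\alpha'$ with $s<t$, and set $i=\beta_1$, $i'=\beta'_1$. The key row to read off is $s+2$. If $s+2\le t$, then the first presentation gives $\nu_{s+2}=\beta_{s+2}=i$, while the second gives $\nu_{s+2}=\beta'_{s+2}+t+\alpha'_{s+2}\ge i'+t$ because $s+2\le t+2$ puts row $s+2$ of $\beta'$ still in the plateau $\beta'_1=\cdots=\beta'_{t+2}=i'$ from Lemma~\ref{properties of beta}(i). This yields $i\ge i'+t$, whence $t\le i$, so the staircase bound of Lemma~\ref{properties of beta}(i) applies to $\beta$ at row $t+1$ to give $\beta_{t+1}\ge i-t+s+1$; but the second presentation reads $\nu_{t+1}=\beta'_{t+1}=i'$, so $i\le i'+t-s-1$, contradicting $i\ge i'+t$ since $s\ge 1$. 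The remaining sub-case $t=s+1$ is dispatched directly: $\nu_{s+2}=\nu_{t+1}$ gives $i=i'$, and then $\nu_{s+1}=i=i'+t+\alpha'_{s+1}\ge i+s+1$ is impossible.

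For (b), let $\nu$ be type~1 with $i:=\nu_1$, so $\nu$ contains $(i,i,i-1,\ldots,1)$, and suppose for contradiction that $\nu$ also admits a type~2 presentation $\beta'+(s^s)+\alpha'$ with $s\ge 1$, $\alpha'_1\ge 1$, $i':=\beta'_1$. The type~2 data immediately give $\nu_1=i'+s+\alpha'_1\ge i'+s+1$ and $\nu_{s+1}=\beta'_{s+1}=i'$, using that the first $s+2$ rows of $\beta'$ all equal $i'$. For $1\le s\le i$ the type~1 containment forces $\nu_{s+1}\ge i-s+1$, so $i\le i'+s-1<i'+s+1\le i$, a contradiction. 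For $s>i$ the type~2 bound $\nu_1\ge i'+s+1\ge s+1>i$ already contradicts $\nu_1=i$.

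The only real obstacle I foresee is combinatorial bookkeeping — tracking which rows of $\beta'$ remain pinned to $i'$, for which row indices the staircase lower bound of Lemma~\ref{properties of beta}(i) actually applies (which needs $t\le s+i$, automatic from $i\ge t$), and cleanly isolating the boundary sub-case $t=s+1$ of~(a) where the generic estimate degenerates.
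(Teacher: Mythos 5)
Your argument is correct, and it takes a genuinely different route from the paper's. The paper handles both parts by a reduction: it strips off the first $\b_1$ columns of $\nu$ (Lemma~\ref{remove}), notes that the truncated partition is still of type $1$ or $2$ with respect to both presentations, and then derives a contradiction from the length inequality $\ell(\b)\geq s+\b_1+1$ of Lemma~\ref{properties of beta}(iii) --- leaving that last verification to the reader (``one can easily see''). You instead work directly with the row lengths of $\nu$, reading off $\nu_{s+1}$, $\nu_{s+2}$, $\nu_{t+1}$ and $\nu_1$ from each presentation and playing the plateau $\b_1=\cdots=\b_{s+2}$ against the staircase bound $\b_j\geq s+\b_1-j+2$ from Lemma~\ref{properties of beta}(i). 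Your version is longer and needs the boundary sub-case $t=s+1$ handled separately, but it is fully self-contained and, as a bonus, makes explicit that $s$ alone determines the data via $\b_j=\nu_j$ for $j\geq s+1$ and $\a_j=\nu_j-\nu_{s+1}-s$, a reconstruction the paper never writes down. One small caveat, shared with the paper's own proof: the identities $\nu_{s+1}=\b_{s+1}$ and $\b_j=\nu_j$ for $j>s$ use the implicit convention that $\ell(\a)\leq s$, i.e., that the $\a$ block sits entirely to the right of the square as in Figure~\ref{fig:type2}; condition (C1) as literally stated does not say this, but the paper relies on it throughout (for instance in Lemma~\ref{remove}(iii)), so assuming it is consistent with the intended definition.
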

\begin{proof}
(a) Suppose that $\nu=\beta+(s_1^{s_1})+\alpha=\tilde{\b}+(s_2^{s_2})+\tilde{\a}$, with $s_1, s_2\geq1$,  $\alpha, \tilde{\a} \neq \emptyset$, and $\b, \tilde{\b}$ satisfying the conditions of (C1).  Moreover, assume that  the two decompositions are different. Then, either $s_1=s_2$ and  $\b_1 \neq \tilde{\b}_1$ (say $\b_1 < \tilde{\b}_1$), or $s_1\neq s_2$ (say $s_2<s_1$). In either case, by Lemma \ref{remove}, $\nu^{(\b_1)}$ is a partition of type 2.  One can easily see that this partition does not satisfy (iii) of Lemma \ref{properties of beta}.

(b)   Suppose that $\nu$ is simultaneously of type 1 and 2, \textit{i.e.},  $\nu=\beta+(s^{s})+\alpha=\tilde{\b}$ with $s\geq 1$, $\a \neq \emptyset$ and $\b, \tilde{\b}$ as in (C1). By Lemma \ref{remove}, $\nu^{(\b_1)}$ is a partition of type 1. However, this partition does not satisfy (iii) of Lemma \ref{properties of beta}. 
\end{proof}

If $\nu$ is of type 2, since the decomposition $\nu=\b+(s^s)+\a$ is unique, we refer to the Young diagram formed by the boxes in $(s^s)$ as the \textit{square in} $\nu$. 

Since the criterion in Conjecture \ref{conjecture} depends on the shape of $\nu$ or $\nu'$,  we  explore the relationship between partitions of type 1 and 2 and their conjugates. 

\begin{proposition} \label{lem:conjugate-types}\

\begin{enumerate}

\item[(a)] If $\nu$ is of type 1, then $\nu'$ is not of type 1.

\item[(b)] If $\nu=\b+(s^s)+\a$ is of type 2 with $\b_1\geq 1$, then $\nu'$ is not of type 1.

\item[(c)]  If $\nu=(s^s)+\a$, $\a \neq \emptyset$  (\textit{i.e.}, $\nu$ is of type 2 with $\b_1=0$), then $\nu'$ is of type 1.

\item[(d)] If $\nu$ and $\nu'$ are both of type 2, then $\nu$ is of the form $\nu=\beta+(s^{s})+\alpha$ with $s \geq 1$, $\a\neq \emptyset$ and $\b$ as in \text{\emph{(C1)}} satisfying $\b_{s+\b_1}=\b_1$.
\end{enumerate}
\end{proposition}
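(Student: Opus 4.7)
My plan is to compute $\nu'$ explicitly from $\nu=\b+(s^s)+\a$ and check the type 1 or type 2 conditions on the result. Assuming the natural requirement $\ell(\a)\leq s$, counting parts of $\nu$ of size $\geq j$ separately for $j\leq \b_1$, $\b_1<j\leq \b_1+s$, and $j>\b_1+s$ yields
\[\nu' = (\b'_1, \b'_2, \ldots, \b'_{\b_1},\; \underbrace{s, s, \ldots, s}_{s},\; \a'_1, \ldots, \a'_{\a_1}).\]
I will refer to the three consecutive blocks as $A$, $B$, $C$. This formula will do most of the work.

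For part (a), I would apply Lemma \ref{properties of beta}(iii) to $\nu$ to get $\ell(\nu)\geq \nu_1+1$, so that $\nu'_1>\ell(\nu')$; if $\nu'$ were also type 1, applying the same inequality to $\nu'$ would give $\ell(\nu')\geq \nu'_1+1\geq \ell(\nu')+2$, a contradiction. For (b), $\b_1\geq 1$ gives $\nu'_{\b_1+1}=s$ from block $B$, but type 1 would require $\nu'_{\b_1+1}\geq \nu'_1-\b_1+1=\ell(\b)-\b_1+1\geq s+2$ by Lemma \ref{properties of beta}(iii), a contradiction. For (c), setting $\b_1=0$ deletes block $A$, leaving $\nu'=(s^s,\a'_1,\ldots,\a'_{\a_1})$; I would verify the type 1 conditions directly: $\nu'_1=\nu'_2=s$, $\ell(\nu')=s+\a_1\geq s+1$, and the staircase inclusion $\nu'_j\geq s-j+2$ for $2\leq j\leq s+1$, the only nontrivial point being $\a'_1=\ell(\a)\geq 1$, which holds since $\a\neq\emptyset$.

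For (d), assume both $\nu$ and $\nu'$ are type 2 and let $\nu'=\tilde{\b}+(\tilde{s}^{\tilde{s}})+\tilde{\a}$ be its unique type 2 decomposition (Proposition \ref{lem:possible-types}). The decomposition forces $\nu'_{\tilde{s}+1}=\nu'_{\tilde{s}+2}=\tilde{\b}_1$ together with the drop $\nu'_{\tilde{s}}-\nu'_{\tilde{s}+1}\geq \tilde{s}$. I would case-split on which of $A$, $B$, $C$ contain rows $\tilde{s}$ and $\tilde{s}+1$. The cases $\tilde{s}\geq \b_1+1$ fail because the consecutive drops inside $B$ and $C$ are all bounded by $s<\tilde{s}$ (using $\a'_1\leq s$). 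The case $\tilde{s}=\b_1-1$ fails because $\nu'_{\b_1}=\b'_{\b_1}\geq s+2$ cannot equal $\nu'_{\b_1+1}=s$. For $\tilde{s}\leq\b_1-2$, applying Lemma \ref{properties of beta}(i) to $\tilde{\b}$ gives $\tilde{\b}_{\b_1+1}\geq \tilde{s}+\tilde{\b}_1-\b_1+1=\tilde{s}+\b'_{\tilde{s}+1}-\b_1+1$; combined with $\tilde{\b}_{\b_1+1}=\nu'_{\b_1+1}=s$ this yields $\b'_{\tilde{s}+1}\leq s+\b_1-\tilde{s}-1$, contradicting the lower bound $\b'_{\tilde{s}+1}\geq s+\b_1-\tilde{s}+1$ from Lemma \ref{properties of beta}(ii) applied to $\b$. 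Only $\tilde{s}=\b_1$ survives; block $B$ then forces $\tilde{\b}_1=s$, and the drop condition $\b'_{\b_1}-s\geq \b_1$ is precisely the statement $\b_{s+\b_1}=\b_1$. The hardest step will be this case analysis in (d), and in particular the subcase $\tilde{s}\leq \b_1-2$, which relies on carefully contrasting the two staircase bounds provided by Lemma \ref{properties of beta}(i)--(ii) applied to $\tilde{\b}$ and $\b$ respectively.
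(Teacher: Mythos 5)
Your argument is correct, but for parts (b) and (d) it takes a genuinely different route from the paper. The paper proves (b) and (d) by reduction to part (a): using Lemma \ref{remove} it strips the first $s$ rows (and, in (d), also the first $t$ columns) to produce a pair of mutually conjugate partitions that would both have to be of type 1, contradicting (a); the cost is that the translation of the resulting statement ``$(\nu^{(t)})_{(s)}=\emptyset$'' into the precise conclusion $\b_{s+\b_1}=\b_1$ is left implicit. You instead compute $\nu'=(\b'_1,\dots,\b'_{\b_1},s^s,\a')$ explicitly and play the two staircase bounds of Lemma \ref{properties of beta}(i)--(ii) off against each other; this is longer and more computational, but it makes the origin of the condition $\b_{s+\b_1}=\b_1$ completely transparent (it is exactly the drop condition $\nu'_{\b_1}-\nu'_{\b_1+1}\geq\b_1$), and your (b) is self-contained rather than resting on Lemma \ref{remove}. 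Two small points, neither fatal. First, your conjugate formula requires $\ell(\a)\leq s$; this is indeed the intended reading of the decomposition $\b+(s^s)+\a$ (Figure \ref{fig:type2} and Lemma \ref{remove}(iii) both presuppose it), so flagging it as a standing assumption is the right move. Second, in the case $\tilde{s}\geq\b_1+1$ of (d), the stated reason ``all drops are bounded by $s<\tilde{s}$'' is inaccurate when $\b_1+1\leq\tilde{s}\leq\b_1+s$, where $s<\tilde{s}$ need not hold; but in that range rows $\tilde{s}$ and $\tilde{s}+1$ either both lie in block $B$ (drop $0$) or straddle the $B$/$C$ boundary (drop $s-\ell(\a)<\tilde{s}$), so the case still fails and the repair is immediate.
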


Before we prove Proposition \ref{lem:conjugate-types}, we give an example illustrating part $(d)$. 
\begin{example}Consider the  partition $\nu=(6,5,3,3,3,1)$. Then, both $\nu$ and  $\nu'=(6,5,5,2,2,1)$ are of type 2. We have $\b_1=3$, $s=2$, and $\b_{s+\b_1}=\b_5=3=\b_1$. The Young diagram of $\nu$ is given in Figure \ref{part(d)}.
\begin{figure}[h]
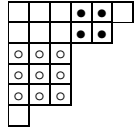

\centering
{\tiny \young(\hfill\hfill\hfill\bu\bu\hfill,\hfill\hfill\hfill\bu\bu,\cir\cir\cir,\cir\cir\cir,\cir\cir\cir,\hfil)}
\caption{ A partition of type 2 whose conjugate is of type 2}
\label{part(d)}
\end{figure}
\end{example}
The condition $\b_{s+\b_1}=\b_1$ ensures that the square in $\nu$ (marked with $\bu$ above) and the square in $\nu'$ (marked with $\cir$ above) meet but do not have any common edge. Then, assuming Conjecture \ref{conjecture} is true, the only cover of $\nu$ is the partition $\mu =(6,5,4,3,3,1)$. Moreover, $\mu'$ is the only cover of $\nu'$.

\begin{proof}
(a) Suppose $\nu$ is of type 1.  By Lemma \ref{properties of beta} (iii),  $\ell(\nu)\geq \nu_1+1$. Then, $\ell(\nu')=\nu_1\leq \ell(\nu)-1=\nu'_1-1$. Since $\nu'$ does not satisfy  Lemma \ref{properties of beta} (iii), it is not of type 1.   

(b)  Suppose $\nu=\beta+(s^{s})+\alpha$ is of type 2 with $\b_1\geq 1$ and $\nu'$ is of type 1. Then, by Lemma \ref{remove}, $\nu_{(s)}$ is of type 1 and $(\nu')^{(s)}$ is or type 1. Since $(\nu')^{(s)}=(\nu_{(s)})'$, this contradicts part (a). 

(c) If $\nu=(s^s)+\a$, $\a \neq \emptyset$, clearly $\nu'$ contains $(s,s,s-1,\ldots, 2,1)$.  

(d)   Suppose $\nu=\beta+(s^{s})+\alpha$  and $\nu'=\tilde{\b}+(t^t)+\tilde{\a}$ are  of type 2. Using Lemma \ref{remove}, we have $\nu^{(t)}$ is of type 2 and $(\nu^{(t)})_{(s)}$ is of type 1. Also, $(\nu')^{(s)}$ is of type 2 and $((\nu')^{(s)})_{(t)}$ is of type 1. Since  the conjugate of $(\nu^{(t)})_{(s)}$ is equal to $((\nu')^{(s)})_{(t)}$, by part (a), we must have $(\nu^{(t)})_{(s)}=\emptyset$. 
\end{proof}

Note that parts (b) and (c) of Proposition \ref{lem:conjugate-types} show that $\nu=\b+(s^s)+\a$ is of type 2 with conjugate  of type 1 if and only if $\b=\emptyset$. Equivalently, a partition $\nu$ is of type 1 with conjugate of type 2 if and only if    $(\nu_1^{\nu_1})\subseteq \nu$. 

\begin{theorem} \label{one-none} Assuming the validity of the main conjecture, a partition has either no cover or exactly one cover.
\end{theorem}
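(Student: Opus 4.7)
The plan is to argue by case analysis on which of $\nu$ and $\nu'$ satisfies condition (C1). By the main conjecture, any cover of $\nu$ arises from a decomposition of $\nu$ or of $\nu'$ satisfying (C1), with $\mu$ (resp.\ $\mu'$) determined by (C2). By Proposition~\ref{lem:possible-types}(a) the type 2 decomposition is unique, and the type 1 decomposition $\nu=\beta$ is trivially unique, so there are at most two candidate covers---one coming from $\nu$'s decomposition, one from $\nu'$'s. I need to show these candidates agree whenever both exist. By Proposition~\ref{lem:possible-types}(b) no partition is simultaneously type 1 and type 2, and by Proposition~\ref{lem:conjugate-types}(a) $\nu$ and $\nu'$ cannot both be type 1. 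So the only cases to check are: (i) exactly one of $\nu$, $\nu'$ is of type 1 and the other of type 2; or (ii) both are of type 2.

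In each case, the cover differs from $\nu$ by a single box, so it suffices to match the positions of these boxes, remembering that a box coming from $\nu'$'s decomposition lives in $\mu'$ and must be conjugated. For case (i), I would apply Proposition~\ref{conj} to reduce to the sub-case $\nu$ type 1 and $\nu'$ type 2. Then $\nu=\beta$ and the candidate from $\nu$ adds the box $(1,\nu_1+1)$ to $\mu$. By Proposition~\ref{lem:conjugate-types}(c), the type 2 decomposition of $\nu'$ must satisfy $\beta^*=\emptyset$, so $\nu'=((s^*)^{s^*})+\alpha^*$ with $s^*=\ell(\nu')=\nu_1$; the candidate from $\nu'$ then adds the box $(s^*+1,1)=(\nu_1+1,1)$ in $\mu'$, which is conjugate to $(1,\nu_1+1)$ in $\mu$. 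The two candidates coincide.

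For case (ii), write $\nu=\beta+(s^s)+\alpha$ and $\nu'=\beta^*+(s^{*s^*})+\alpha^*$. The candidate from $\nu$ adds a box at $(s+1,\beta_1+1)$ in $\mu$; the candidate from $\nu'$ adds a box at $(s^*+1,\beta^*_1+1)$ in $\mu'$, i.e.\ at $(\beta^*_1+1,s^*+1)$ in $\mu$. The central claim is that $s^*=\beta_1$ and $\beta^*_1=s$. The plan is first to invoke Proposition~\ref{lem:conjugate-types}(d) to obtain $\beta_{s+\beta_1}=\beta_1$, and combine this with the monotonicity of $\beta$ and the staircase condition to conclude that rows $s+1,\ldots,s+\beta_1$ of $\nu$ all have length exactly $\beta_1$; a direct count of rows of length $\ge j$ then yields $\nu'_j=s$ for $\beta_1+1\le j\le\beta_1+s$. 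I would then exhibit an explicit type 2 decomposition $\nu'=\tilde\beta+((\beta_1)^{\beta_1})+\tilde\alpha$ by setting $\tilde\beta_i=s$ for $i\le\beta_1$, $\tilde\beta_i=\nu'_i$ for $i>\beta_1$, and $\tilde\alpha_i=\nu'_i-s-\beta_1$ for $i\le\beta_1$, and verify that it satisfies all clauses of (C1). By the uniqueness part of Proposition~\ref{lem:possible-types}(a), this must coincide with the actual decomposition, forcing $s^*=\beta_1$ and $\beta^*_1=s$, so the two candidate boxes agree.

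The main obstacle will be the verification in case (ii): confirming that the constructed $\tilde\beta$ satisfies the full staircase containment required by (C1), and that $\tilde\alpha$ is nonempty. This requires careful computation of $\nu'_j$ throughout the range $1\le j\le\beta_1+s+1$, drawing on both the staircase structure of $\beta$ and the hypothesis $\alpha\ne\emptyset$ (which guarantees $\nu'_{\beta_1+s+1}\ge 1$, the bottom entry required by $\tilde\beta$'s staircase). Once this verification is complete, the uniqueness argument closes the proof with no further work.
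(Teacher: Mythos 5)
Your proposal is correct and follows essentially the same route as the paper: the paper's proof also splits into the cases governed by Propositions \ref{lem:possible-types} and \ref{lem:conjugate-types} and concludes in the doubly-type-2 case that the two squares meet at a corner so the added boxes coincide (your explicit computation via $\b_{s+\b_1}=\b_1$ just fills in what the paper leaves as a remark). One small citation slip: the fact that $\nu$ type 1 with $\nu'$ type 2 forces $\beta^*=\emptyset$ is the contrapositive of Proposition \ref{lem:conjugate-types}(b), not part (c).
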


\begin{proof}

If $\nu=\b+(s^s)+\a$ is of type 2 with $\b \neq \emptyset$ and $\nu'$ is not of type 2, then by Proposition \ref{lem:possible-types}  and Proposition \ref{lem:conjugate-types} (b), there is only one place where a box can be added to $\nu$ to create a partition $\mu$ satisfying (C2). 

If $\nu$ is of type 1 and $(\nu_1^{\nu_1})\not \subseteq \nu$, then by Proposition \ref{lem:possible-types} (b) and Proposition \ref{lem:conjugate-types} (b) and (c), there is only one place where a box can be added to $\nu$ to create a partition $\mu$ satisfying (C2). 

Finally, if $\nu=\b+(s^s)+\a$ is of type 2 with $\b=\emptyset$  or $\nu$ is of type 2 with conjugate of type 2, then, by Proposition \ref{lem:conjugate-types} (c) and (d),   the box  added to $\nu$ to create $\mu$ as in (C2) is the same  as the box  added to $\nu'$ to create $\mu'$ as in (C2). 
\end{proof}

In the next theorem, we prove the interesting property that, if Conjecture \ref{conjecture} is true, then covering partitions $\mu$ are not covered by any partition.

\begin{theorem} \label{covers don't cover} Assuming the validity of the Conjecture \ref{conjecture}, if $\mu$ covers $\nu$, then $\mu$ is not covered by any partition of size $|\mu|+1$. 
\end{theorem}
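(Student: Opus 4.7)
The plan is to invoke Conjecture \ref{conjecture}: to show $\mu$ is not covered by any partition of size $|\mu|+1$, it suffices to show that $\mu$ fails (C1), i.e.\ that neither $\mu$ nor $\mu'$ is of type 1 or of type 2. By Proposition \ref{conj} the cover relation is preserved under conjugation, so I may assume $\nu$ (not $\nu'$) satisfies (C1). Write $\nu=\beta+(s^s)+\alpha$, with $s=0$ and $\alpha=\emptyset$ in the type-1 case, and set $b:=\beta_1$. By (C2), $\mu=\beta+(s^s,1)+\alpha$, meaning that $\mu$ is obtained from $\nu$ by inserting a single new row of length $b+1$ between the $s$ square rows and the top of $\beta$. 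Combined with (C1)(i) (which guarantees $\beta_1=\cdots=\beta_{s+2}=b$), this yields three structural features of $\mu$ that I will use repeatedly: (F1) $\mu$ has exactly one row of length $b+1$, located at position $s+1$; (F2) $\mu_{s+2}=\mu_{s+3}=\cdots=\mu_{2s+3}=b$; and (F3) if $s\geq 1$, then $\mu_1,\ldots,\mu_s$ all have length at least $b+s>b+1$.

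Suppose for contradiction that $\mu=\tilde\beta+(\tilde s^{\tilde s})+\tilde\alpha$ is a decomposition satisfying (C1), allowing $\tilde s=0$ and $\tilde\alpha=\emptyset$ in the type-1 subcase. Applying (C1)(i) to $\tilde\beta$ gives $\tilde\beta_1=\tilde\beta_2=\cdots=\tilde\beta_{\tilde s+2}$, so $\mu$ must contain a block of $\tilde s+2\geq 2$ consecutive equal rows at positions $\tilde s+1,\ldots,2\tilde s+2$, all of value $\tilde\beta_1$. I would perform a three-case analysis according to the position of this block relative to $s+1$. \emph{Case (a): the block contains position $s+1$.} Then $\tilde\beta_1=b+1$, and the multiplicity of $b+1$ in $\mu$ is at least $\tilde s+2\geq 2$, contradicting (F1). \emph{Case (b): the block lies strictly below $s+1$}, forcing $\tilde s\geq s+1$. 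Then the row $\mu_{s+1}=b+1$ sits either among the pure square rows of the decomposition, so $\tilde\beta_1+\tilde s=b+1$ (and using $\tilde\beta_1=\mu_{\tilde s+1}$ together with Lemma \ref{properties of beta} applied to $\beta$ produces a contradiction with (C1)(i) for $\beta$), or among the $\tilde\alpha$-rows, in which case $\tilde\alpha_{s+1}=b+1-\tilde\beta_1-\tilde s\leq 0$, contradicting positivity of the parts of $\tilde\alpha$. \emph{Case (c): the block lies strictly above $s+1$}, i.e.\ $2\tilde s+2\leq s$. Then the staircase portion of $\tilde\beta$ that follows the block requires $\mu_{2\tilde s+3}=\tilde\beta_1-1$, $\mu_{2\tilde s+4}=\tilde\beta_1-2$, and so on; but the actual rows of $\mu$ in this range follow the jump $b+s\to b+1\to b$ described by (F1)--(F3), which cannot match a one-step staircase without violating (C1)(i) for $\beta$. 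The degenerate type-1 subcase $\tilde s=0$ collapses to requiring $\mu_1=\mu_2$, which fails at once by (F1) and (F3).

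For the conjugate $\mu'$, the argument runs in parallel: conjugation sends the inserted row to an inserted column of length $b+1$, producing the dual versions of (F1)--(F3), and one reruns the same three cases for any hypothetical decomposition of $\mu'$ satisfying (C1). The main obstacle will be case (c), because when $\beta$ is non-minimal it may have additional equal plateaus below the value $b$, creating spurious candidate positions where the $\tilde\beta_1$-block could sit; disposing of these requires combining (C1)(i) for both $\beta$ and $\tilde\beta$ and carefully matching each putative staircase in $\tilde\beta$ against the actual row-values of $\mu$. I expect case (c) and its conjugate mirror to be the technical core of the proof, while cases (a) and (b) should follow quickly from the structural features (F1)--(F3) and Lemma \ref{properties of beta}.
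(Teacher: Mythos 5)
Your overall strategy --- show that $\mu$ fails (C1) by ruling out every possible type-1 or type-2 decomposition of $\mu$ and of $\mu'$ --- is exactly the paper's, but your execution contains factual errors and an explicitly unfinished core. First, the structural facts you extract are not all correct. The partition $\mu=\beta+(s^s,1)+\alpha$ is obtained from $\nu$ by adding a single box to row $s+1$ (turning a row of length $b$ into one of length $b+1$), not by ``inserting a new row of length $b+1$''; the latter would add $b+1$ boxes, contradicting $|\mu|=|\nu|+1$. More importantly, (F2) is false: condition (C1)(i) only forces $\beta_1=\cdots=\beta_{s+2}=b$, so after the box is added the only guaranteed row of $\mu$ of length exactly $b$ below the distinguished row is $\mu_{s+2}=\beta_{s+2}=b$, while $\mu_{s+3}=\beta_{s+3}$ may already equal $b-1$; there is no plateau $\mu_{s+2}=\cdots=\mu_{2s+3}=b$. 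The same miscount infects your hypothetical decomposition $\mu=\tilde\beta+(\tilde s^{\tilde s})+\tilde\alpha$: the rows of $\mu$ at positions $1,\dots,\tilde s$ equal $\tilde\beta_j+\tilde s+\tilde\alpha_j$, not $\tilde\beta_1$, so the block of equal rows of value $\tilde\beta_1$ that (C1)(i) actually guarantees sits only at positions $\tilde s+1$ and $\tilde s+2$ --- two rows, not $\tilde s+2$ of them. Cases (b) and (c) are built on this wrong block, and case (c), which you yourself identify as ``the technical core,'' is left as a plan rather than an argument, as is the entire conjugate analysis. (Also, the justification of (F3) via $b+s>b+1$ fails for $s=1$; there you need $\alpha_1\ge 1$.) So the proposal is a sketch with errors, not a proof.

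The paper avoids the row-by-row matching entirely. Its tools are Lemma \ref{remove} (removing the first $k$ columns or the first $s$ rows preserves type), the length bound of Lemma \ref{properties of beta} (iii) (namely $\ell(\beta)\ge s+\beta_1+1$), and the observation from its Case (I)(b) that a partition and its conjugate cannot both be of type 1. For instance, to rule out $\mu$ being of type 2 when $\nu$ is of type 2, the paper strips off the first $\beta_1+1$ columns and checks that what remains violates the length bound; to rule out $\mu'$ being of type 1 or type 2, it reduces by row and column removal to the impossibility of a partition and its conjugate both being of type 1. If you want to salvage your argument, I would replace the plateau-matching in cases (b) and (c) with these reductions rather than trying to finish the staircase comparison by hand.
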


\begin{proof} Since $\mu$ covers $\nu$, the partition $\nu$ must satisfy (C1) and the partition $\mu$ must satisfy (C2). We show that $\mu$ does not satisfy (C1). We consider different cases for all possibilities of $\nu$ and $\mu$. 

\noindent Case (I): $\nu$ is a partition of type 1. Then $\mu=\nu+(1)$. 
\begin{enumerate}
\item[(a)] Since $\mu_1=\mu_2+1$, the partition $\mu$ is not of type 1.  

\item[(b)] Since  $\ell(\mu')=\nu_1+1$ and $\mu'_1=\ell(\nu)\geq \nu_1+1$, we have $\ell(\mu')<\mu'_1+1$. By Lemma \ref{properties of beta} (iii), $\mu'$ is not of type 1. 

\item[(c)] If $\mu$ is of type 2, $\mu=\nu+(1)=\b+(s^s)+\a$.  By Lemma \ref{remove},  $\nu^{(\b_1)}$ is of type 1. However,  since $\nu^{(\b_1)}+(1)=(s^s)+\a$, the partition $\nu^{(\b_1)}$ does not satisfy Lemma \ref{properties of beta} (iii).  Thus, $\mu$ is not of type 2.  

\item[(d)] If $\mu'=\b+(s^s)+\a$ is of type 2, then $\nu^{(s)}$ and $\mu'_{(s)}$ are both of type 1. As shown above, in part (b), that this is impossible.  Thus, $\mu'$ is not of type 2.  

\end{enumerate} 

\noindent Case (II):  $\nu=\b+(s^s)+\a$ is a partition of type 2. Then, $\mu=\b+(s^s,1)+\a$.

\begin{enumerate}
\item[(a)]  If $\mu$ is of type 1, then so is $\mu^{(\b_1+1)}$, a partition of length $s$ and width at least $s$. This contradicts Lemma \ref{properties of beta} (iii). Thus $\mu$ is not of type 1. 

\item[(b)]   If $\mu'$ is of type 1, by Lemma \ref{remove},  $(\mu')^{(s)}$ and  $\nu_{(s)}$ are both of type 1.  As shown in  Case (I) (b) of this proof,  this is impossible. Thus $\mu'$ is not of type 1.

\item[(c)]  Suppose  $\mu$ is of type 2, \textit{i.e.}, $\mu =\tilde{\b}+(t^t)+\tilde{\a}$. We must have $s \geq t+2$, otherwise the decomposition of $\nu$ as a type 2 partition is not unique. Therefore $\b+(s^s) \subseteq \tilde{\b}$. By Lemma \ref{remove},  $\mu^{(\b_1+1)}$ is of type 2. This partition does not satisfy (iii) of Lemma \ref{properties of beta}. Thus, $\mu$ is not of type 2.    

\item[(d)]  Suppose $\mu'$ is of type 2, \textit{i.e.}, $\mu'=\tilde{\b}+(t^t)+\tilde{\a}$. By Lemma \ref{remove},  $(\nu_{(s)})^{(t)}$ and  $((\mu')^{(s)})_{(t)}$ are both of type 1. As shown in  Case (I) (b) of this proof,  this is impossible. Thus, $\mu'$ is not of type 2.  
\end{enumerate}
\end{proof}

Let $\mathcal{C}_1$ be the set of all partitions satisfying (C1), \textit{i.e.}, $\mathcal{C}_1$ consists of partitions $\nu$ such that $\nu$ or $\nu'$ is equal to $\b+(s^s)+\a$, with $\a, \b$ satisfying the following conditions: 

(i)  If $\b_1=i\geq0$ , then $\b$  contains $(i^{s+1}, i, i-1, i-2, \ldots, 1)$.

(ii)  $s\neq 0$ if and only if $\a \neq \emptyset$. 

Let $\mathcal{C}_2$ be the set of all partitions satisfying  (C2), \textit{i.e.}, $\mathcal{C}_2$ consists of partitions $\mu$ such that $\mu$ or $\mu'$  is equal to $\b+(s^s,1)+\a$, where $\b$ and $\a$ are as above. 

The next result follows immediately from Theorem \ref{covers don't cover}. 

\begin{corollary} The sets  $\mathcal{C}_1$ and  $\mathcal{C}_2$ are disjoint. 

\end{corollary}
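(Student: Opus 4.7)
The plan is to deduce the corollary directly from Theorem \ref{covers don't cover}, essentially by rephrasing what that theorem already establishes. The key observation is that $\mathcal{C}_2$ consists of exactly those partitions which, under Conjecture \ref{conjecture}, arise as covers of some partition in $\mathcal{C}_1$, while a partition in $\mathcal{C}_1$ is (again under the conjecture and Theorem \ref{one-none}) guaranteed to admit a cover. Theorem \ref{covers don't cover} forbids any cover from being covered in turn, so no partition can simultaneously play both roles.

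Concretely, I would begin by taking $\mu \in \mathcal{C}_2$ and unpacking the definition: either $\mu$ or $\mu'$ has the form $\beta + (s^s,1) + \alpha$ with $\beta$ and $\alpha$ satisfying conditions (i) and (ii) of (C1). In the first case set $\nu := \beta + (s^s) + \alpha$; in the second case take $\nu$ so that $\nu' = \beta + (s^s) + \alpha$. By construction $\nu$ satisfies (C1) and $\mu$ satisfies (C2) relative to $\nu$, so Conjecture \ref{conjecture} yields that $\mu$ covers $\nu$. Theorem \ref{covers don't cover} then gives that $\mu$ is not covered by any partition of size $|\mu|+1$.

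To finish, I would argue by contradiction: if $\mu$ also belonged to $\mathcal{C}_1$, then $\mu$ satisfies (C1), and by Conjecture \ref{conjecture} together with Theorem \ref{one-none} there would exist a (unique) partition $\widetilde{\mu}$ of size $|\mu|+1$ that covers $\mu$. This contradicts the conclusion of the previous step, so $\mu \notin \mathcal{C}_1$, establishing $\mathcal{C}_1 \cap \mathcal{C}_2 = \emptyset$.

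There is no genuine obstacle here; the only subtlety worth flagging is that the argument invokes Conjecture \ref{conjecture} in two opposite directions, once to turn membership in $\mathcal{C}_2$ into an actual covering relation and once to turn membership in $\mathcal{C}_1$ into one. This double appeal can be avoided entirely by reading off the proof of Theorem \ref{covers don't cover} at a finer grain: the four sub-cases in Case (I) and Case (II) of that proof show, purely structurally, that any $\mu$ obtained from a partition $\nu$ satisfying (C1) by adding the prescribed box fails each of the four routes to membership in $\mathcal{C}_1$ (being of type $1$, having conjugate of type $1$, being of type $2$, or having conjugate of type $2$). Recording this case analysis as the proof of the corollary makes the disjointness of $\mathcal{C}_1$ and $\mathcal{C}_2$ an unconditional statement, which I would prefer stylistically since $\mathcal{C}_1$ and $\mathcal{C}_2$ are themselves defined without reference to Schur-positivity.
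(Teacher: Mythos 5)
Your proof is correct and takes essentially the same route as the paper, which derives the corollary immediately from Theorem \ref{covers don't cover}. Your closing observation is well taken: since the corollary is stated unconditionally while that theorem's statement is conditional on Conjecture \ref{conjecture}, the disjointness really rests on the purely structural case analysis inside the theorem's proof (showing any $\mu$ satisfying (C2) fails all four routes to (C1)), and your preferred unconditional formulation is exactly what the paper implicitly relies on.
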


We note that  $\mathcal{C}_1\cup \mathcal{C}_2$ is not equal to the set of all partitions. For example, partitions of the form $(s^s)$ are neither in $\mathcal{C}_1$ nor in $\mathcal{C}_2$. 


\section{Partitions of Type 1 }

In this section we prove that Conjecture \ref{conjecture} holds for some special cases of partitions of type 1.  In particular, we prove the conjecture for partitions $\nu$ of type 1 containing the rectangle $(\nu_1^{\nu_1-1})$ and those of width at most four. 

\subsection{Partitions of type 1 containing a large rectangle} 

Let $\nu=\b$ be a partition of type 1 with $\b_1=i$ and suppose $\b$ contains the rectangle $(i^{i-1})$. Therefore,  if $i\geq 2$, $\beta$ contains $(i^{i-1}, 2, 1)$, \textit{i.e.}, $\b'_i \geq i-1$. 
In the next theorem and its corollary we prove that $\nu$ is covered by $\mu=\nu+(1)$,  the partition obtained from $\nu$ by adding a box at the end of its first row, by showing that $\ds c_{\nu'\nu^c}^{\l}\leq c_{\mu'\mu^c}^{\l}$ for all partitions $\l$.  We use Corollary \ref{cor-sym} and consider Littlewood-Richardson tableau of shape $\gamma\times \sigma \times (\b/\eta)^*$ (respectively $\gamma\times \sigma \times ((\b+(1))/\eta)^*$) rather than of shape $ \gamma\times (\b/\eta)^*\times \sigma$ (respectively, $\gamma \times ((\b+(1))/\eta)^*\times\sigma$). This simplifies considerably the description  of tableaux and injections between sets of tableaux. 
For a partition $\l=(\eta^c+\gamma, \sigma)$ such that $\ds c_{\nu'\nu^c}^{\l}\neq 0$, we give an algorithm that assigns to each $T\in LR(\gamma\times\sigma\times (\beta/\eta)^*, \beta')$ a unique tableau in $LR(\gamma\times \sigma \times ((\b+(1))/\eta)^*,(\b+(1))')$. We denote by $x$  the box in the diagram  $\gamma\times \sigma \times ((\b+(1))/\eta)^*$ that is not in $\gamma\times \sigma \times (\b/\eta)^*$

\begin{figure}[ht]
\centering\hspace*{-2cm}
\begin{picture}(400,120)
\put(180,110){\small{$\gamma$}}
\put(170,90){\line(0,0){30}}
\put(170,120){\line(1,0){30}}
\put(200,110){\line(0,0){10}}
\put(190,110){\line(1,0){10}}
\put(190,100){\line(0,0){10}}
\put(180,100){\line(1,0){10}}
\put(180,90){\line(0,0){10}}
\put(170,90){\line(1,0){10}}
\put(130,73){\small{$\sigma$}}
\put(120,55){\line(0,0){30}}
\put(120,85){\line(1,0){30}}
\put(150,75){\line(0,0){10}}
\put(140,75){\line(1,0){10}}
\put(140,65){\line(0,0){10}}
\put(130,65){\line(1,0){10}}
\put(130,55){\line(0,0){10}}
\put(120,55){\line(1,0){10}}
\put(60,30){\small{$(\beta/\eta)^*$}}
\put(40,0){\line(1,0){30}}
\put(40,0){\line(0,0){30}}
\put(40,30){\line(1,0){10}}
\put(50,30){\line(0,0){10}}
\put(50,40){\line(1,0){10}}
\put(60,40){\line(0,0){10}}
\put(60,50){\line(1,0){40}}
\put(100,30){\line(0,0){20}}
\put(90,30){\line(1,0){10}}
\put(90,20){\line(0,0){10}}
\put(80,20){\line(1,0){10}}
\put(80,10){\line(0,0){10}}
\put(70,10){\line(1,0){10}}
\put(70,0){\line(0,0){10}}
\hspace*{.5cm}
\put(380,110){\small{$\gamma$}}
\put(370,90){\line(0,0){30}}
\put(370,120){\line(1,0){30}}
\put(400,110){\line(0,0){10}}
\put(390,110){\line(1,0){10}}
\put(390,100){\line(0,0){10}}
\put(380,100){\line(1,0){10}}
\put(380,90){\line(0,0){10}}
\put(370,90){\line(1,0){10}}
\put(330,73){\small{$\sigma$}}
\put(320,55){\line(0,0){30}}
\put(320,85){\line(1,0){30}}
\put(350,75){\line(0,0){10}}
\put(340,75){\line(1,0){10}}
\put(340,65){\line(0,0){10}}
\put(330,65){\line(1,0){10}}
\put(330,55){\line(0,0){10}}
\put(320,55){\line(1,0){10}}
\put(260,30){\small{$(\beta/\eta)^*$}}
\put(240,0){\line(1,0){30}}
\put(240,0){\line(0,0){30}}
\put(240,30){\line(1,0){10}}
\put(250,30){\line(0,0){10}}
\put(250,40){\line(1,0){10}}
\put(260,40){\line(0,0){10}}
\put(260,50){\line(1,0){40}}
\put(300,30){\line(0,0){20}}
\put(290,30){\line(1,0){10}}
\put(290,20){\line(0,0){10}}
\put(280,20){\line(1,0){10}}
\put(280,10){\line(0,0){10}}
\put(270,10){\line(1,0){10}}
\put(270,0){\line(0,0){10}}
\multiput(230,0)(.5,0){20}{\blue{\line(0,0){10}}}
\end{picture}
\caption{$\gamma\times \sigma\times (\beta/\eta)^*$ and $\gamma\times \sigma\times (\beta+(1)/\eta)^*$}
\label{fig:shapes}
\end{figure}
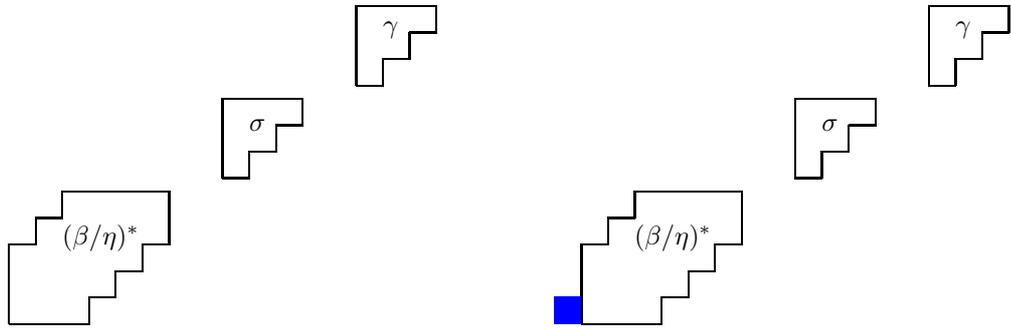

Figure \ref{fig:shapes} illustrates the shape of the diagrams to be  filled with the labels of the superstandard tableau of $\b'$ (for the diagram on the left)  respectively of $(\b', 1)$ (for the diagram on the right).  The box $x$ is  shaded blue.  The goal is to insert a label $i+1$ into a tableau of the shape shown on the left  so the result is a Littlewood Richardson  tableau of the shape on the right.   When we count columns in a skew shape such as those in Figure \ref{fig:shapes} we do so starting with the leftmost non-empty column. We only count non-empty columns.

\subsubsection*{\bf Algorithm:} Input a tableau $T \in LR(\gamma\times\sigma\times (\beta/\eta)^*, \beta')$.
\begin{itemize}
\item[{\bf (1)}] (Initializing step) Let  $j=\b_i$. If $\eta_1<i$ and $j<i$, set $q$ equal to the label in the first column and in row $i-j$ from the bottom in $T$.  

\item[{\bf (2)}] If $\eta_1=i$, let $v=i+1$. Go to (8).

\item[{\bf (3)}] If $\eta_1<i$, label $i+1$ replaces (bumps) the rightmost  label in the last row of $T$.
Go to (4).

\item[{\bf (4)}] If $j =i$  or $\eta_1\geq 2$, go to (7). Otherwise, go to (5).

\item[{\bf (5)}] If the leftmost label in the last row of $T$ is $i$, go to (6). Otherwise, go to (7). 

\item[{\bf (6)}] Let $v$ be the last bumped label. If ($q=j$ and $v=j+1$) or ($q=j+1$ and $v= j$), go to (7). Otherwise, $v$ bumps the label directly above it and go to (6). 

\item[{\bf (7)}] Let $v$ be the last bumped label. If there is no box directly to the left of  $v$, go to (8). Otherwise, $v$ bumps the label directly to its left and go to (7).

\item[{\bf (8)}] Place $v$ in box $x$. Output  tableau $T'$. {\bf STOP.}

\end{itemize}

\begin{example}
Consider the partition $\b=(6,6,6,6,6,3,1,1)$. Then $i=6, j=3$, and $\b'=(8,6,6,5,5,5)$. Let $\l=(\eta^c+\gamma, \sigma)$ with $\eta=(1,1,1,1,1), \gamma=(3,1)$, and $\sigma=(1)$. We show the effect of the algorithm on two tableaux  $T \in LR(\gamma\times\sigma\times (\beta/\eta)^*, \beta')$ in which  all labels $i$ are in the last row of $T$. In the first example, shown in Figure \ref{alg:1},  the label $q$ (marked in blue) in the first column and row $i-j=3$ from the bottom in $T$ is equal to $j=3$. In the second example, shown in Figure \ref{alg:2},  $q=j+1=4$. In $T$, the path of the insertion is marked with a  red line. In $T'$ the affected labels are marked in red. 

\begin{figure}[ht]
\centering
\begin{picture}(400,100)(0,0)
\put(25,20){\tiny \young(:::::::::111,:::::::::2,:::::::3,:::::1,:::::2,:::114,11222,23333,\bth4444,55555,66666)}
\put(150,50){$\longrightarrow$}
\put(210,20){\tiny \young(::::::::::111,::::::::::2,::::::::3,::::::1,::::::2,::::114,:11222,:23333,:\rf\rf\rf\rf\rfi,:5555\rs,\rth6666\rse)}
\put(50,5) {\small$T$} 
\put(245,5){\small $ T'$} 
\put(60,20){\red{\line(0,0){20}}}
\put(30,40){\red{\line(1,0){30}}}
\put(15,37.2){\red{$\longleftarrow$}}
\end{picture}
\caption{}
\label{alg:1}
\end{figure}

\begin{figure}[ht]
\centering
\begin{picture}(400,100)(0,0)
\put(25,20){\tiny \young(:::::::::111,:::::::::2,:::::::3,:::::1,:::::2,:::113,11222,33333,\bfo4444,55555,66666)} 
\put(150,50){$\longrightarrow$}
\put(210,20){\tiny \young(::::::::::111,::::::::::2,::::::::3,::::::1,::::::2,::::113,:11222,:\rth\rth\rth\rth\rf,:4444\rfi,:5555\rs,\rth6666\rse)}
\put(50,5){\small $T$} 
\put(245,5){\small $ T'$} 
\put(60,20){\red{\line(0,0){27}}}
\put(26,46){\red{\line(1,0){35}}}
\put(16,43){\red{$\longleftarrow$}}
\end{picture}
\caption{ }
\label{alg:2}
\end{figure}
\end{example}

\begin{theorem}\label{algorithm} Suppose  $\nu=\b$ is a partition of type 1 with $\b_1=i$ and such that $(i^{i-1})\subseteq \b$. Then, for each partition  $\l=(\eta^c+\gamma, \sigma)$ such that $\ds c_{\nu'\nu^c}^{\l}\neq 0$,  the above algorithm provides an injection 
$$LR(\gamma\times\sigma\times (\beta/\eta)^*, \beta') \hookrightarrow LR(\gamma\times \sigma \times ((\b+(1))/\eta)^*,(\b+(1))').$$

\end{theorem}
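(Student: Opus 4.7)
The plan is to verify, in order: (i) the algorithm terminates on every input; (ii) the output $T'$ is a semi-standard tableau of shape $\gamma \times \sigma \times ((\beta+(1))/\eta)^*$ with content $(\beta+(1))'$; (iii) the reverse reading word of $T'$ is a lattice permutation; and (iv) the algorithm admits a well-defined inverse on its image, hence is injective. Termination is immediate, since step (6) strictly decreases the row index of the bumped cell within a fixed column and step (7) strictly decreases the column index within a fixed row. The shape is correct because $v$ is placed into the unique cell $x$ of $((\beta+(1))/\eta)^*$ not present in $(\beta/\eta)^*$, and the content is correct because exactly one new label $i+1$ is introduced in step (3) while the remaining operations only permute existing entries; since $(\beta+(1))'=(\beta',1)$ is obtained from $\beta'$ by appending a single $1$ at position $i+1$, the final multiset of labels matches $(\beta+(1))'$.

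The substantive part of the theorem is verifying semi-standardness and the lattice property. The hypothesis $(i^{i-1}) \subseteq \beta$ is crucial: it forces $\beta'_j \geq i-1$ for $1 \leq j \leq i$, which in turn guarantees enough repetition of each label value in $T$ to absorb the local perturbation caused by the bumps without breaking row weak-increase or column strict-increase. The case split at step (6), governed by the reference label $q$ fixed in step (1), is precisely what routes the bump through legal moves: when $(q,v)=(j,j+1)$ or $(q,v)=(j+1,j)$, further upward motion would create a column violation with the label immediately to the left, so one switches to step (7) and bumps leftward instead. Using Remark \ref{horizontal-strips}, each leftward bump can be interpreted as shifting a horizontal strip $h_r$ one box to the right in the superstandard tableau of $\beta'$, while each upward bump reassigns a box between two consecutive strips. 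A careful but routine check shows that these local modifications assemble into a valid horizontal-strip decomposition of the superstandard tableau of $(\beta+(1))'$, where the new singleton row of $(\beta+(1))'$ accommodates the label $i+1$ deposited in the last row during step (3).

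For injectivity, the algorithm is locally reversible: given $T'$ in the image, the label in $x$ determines the final value of $v$, the location of the inserted $i+1$ can be located in the last row, and the sequence of bumps can then be undone in reverse, with the branching at each stage uniquely determined by the local label configuration. The main obstacle I anticipate is the verification at step (6): showing that the branching condition on $(q,v)$ exactly separates configurations where an upward bump remains legal from those where it does not, and that after switching to step (7) the resulting tableau still satisfies the horizontal-strip condition of Remark \ref{horizontal-strips} with respect to $(\beta+(1))'$. This requires a more global use of the hypothesis $(i^{i-1})\subseteq\beta$ to control the labels encountered in the leftmost columns; every other step has a strictly local effect and is straightforward to analyze by comparison.
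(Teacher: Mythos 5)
Your outline correctly identifies the skeleton of the argument (termination, shape/content, semistandardness, lattice condition, injectivity) and the role of the hypothesis $(i^{i-1})\subseteq\beta$, but it does not actually prove the two claims that carry all the content of the theorem. Everything difficult is deferred: ``a careful but routine check shows that these local modifications assemble into a valid horizontal-strip decomposition,'' and the step-(6) analysis is explicitly labelled as ``the main obstacle I anticipate'' rather than something you resolve. The check is not routine. The paper's proof works because, in the only hard case ($\eta_1=1$, $\beta_i<i$, and all labels $i$ in the last row of $T$), it first derives a rigid structural classification of such $T$: the bottom $i-j-1$ rows are completely forced; the first column contains $1,2,\dots,i$ with exactly one label missing, and that missing label cannot equal $j$ unless $j=2$; row $i-j$ from the bottom is filled with $j,j+1,\dots,j+1$ or $j+1,\dots,j+1$, so $q\in\{j,j+1\}$ and the label above $q$ is pinned down. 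Only with these facts in hand can one verify that the bump path of step (6) terminates at a legal cell, that the label deposited in $x$ is $j$ (or $1$ or $2$ when $j=2$), and hence that the lattice condition survives because $\beta'$ has one more $j$ than $j+1$. None of this is derived in your proposal, and without it the assertion that the branch condition in step (6) ``exactly separates'' legal from illegal upward bumps is unsupported.

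The injectivity argument has the same gap. ``Locally reversible'' is not established and is not how the paper proceeds: a priori two inputs $T_1$ (all $i$'s in the last row) and $T_2$ (not) follow different bump routes and could collide in the image. The paper rules this out by a global comparison — e.g., the rightmost entry of the second-to-last row of $T_1'$ is $i$ while that of $T_2'$ is $i-1$, and when both inputs have all $i$'s in the last row one compares the $i$-th columns of the outputs according to whether $q=j$ or $q=j+1$ — and every one of these comparisons leans on the structural properties above. To turn your proposal into a proof you would need to supply that classification and then carry out the case-by-case verification of both well-definedness and injectivity; as written, the proposal is a plan for a proof rather than a proof.
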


\begin{proof} Let $\l=(\eta^c+\gamma, \sigma)$ be a partition such that $\ds c_{\nu'\nu^c}^{\l}\neq 0$. We input a tableau  $T \in LR(\gamma\times\sigma\times (\beta/\eta)^*, \beta')$ into the algorithm. The algorithm produces a tableau $T'$. By construction, the shape of  $T'$ is $\gamma\times \sigma \times ((\b+(1))/\eta)^*$ and the type is $(\b+(1))'$. We  show that $T'$ is a SSYT whose reverse reading word is a lattice permutation and the map $T \rightarrow T'$   is an injection from $LR(\gamma\times\sigma\times (\beta/\eta)^*, \beta')$ into $LR(\gamma\times \sigma \times ((\b+(1))/\eta)^*,(\b+(1))')$.

If $\eta_1 =i$, then $$\gamma\times\sigma\times ((\beta+(1))/\eta)^*=\gamma\times\sigma\times (\beta/\eta)^*\times(1).$$ The algorithm simply places  the label $i+1$ in $x$. Clearly $T' \in LR(\gamma\times \sigma \times (\b/\eta)^*\times(1),(\b+(1))')$ and $T \rightarrow T'$ is an injection from $LR(\gamma\times\sigma\times (\beta/\eta)^*, \beta')$ into $LR(\gamma\times \sigma \times ((\b+(1))/\eta)^*,(\b+(1))')$.  In particular, this settles the case  $i=1$. 
 
Now suppose that $\eta_1<i$. If $i=2$ the algorithm replaces the label $2$ in the last row of $T$ by $3$ and places a label $2$ in the box $x$. The resulting tableaux is a SSYT. Since there are at least two labels $2$ in the superstandard tableau of $\b'$,  the reverse reading word of  $T'$ is a lattice permutation.  Suppose $T_1, T_2$ are two tableaux in  $LR(\gamma\times\sigma\times (\beta/\eta)^*, \beta')$ such that $T_1 \neq T_2$. Since $\eta_1\neq 2$, we must have $\b'_2=2$ and the last row of each tableaux $T_1, T_2$ consists of one box filled with $2$. Then, the tableaux $T_1, T_2$ differ in a row higher than the last row. Since in this case the algorithm only modifies the last row of a tableau in   $LR(\gamma\times\sigma\times (\beta/\eta)^*, \beta')$, it maps $T_1, T_2$ to different tableaux. Therefore, $T \rightarrow T'$ is an injection from $LR(\gamma\times\sigma\times (\beta/\eta)^*, \beta')$ into $LR(\gamma\times \sigma \times ((\b+(1))/\eta)^*,(\b+(1))')$. Together with the algorithm for the case $\eta_1=2$, this settles the case $i=2$. 
For the remainder of the proof, we assume that $i \geq 3$.

If $\b_i =i$ or $2\leq \eta_1<i$,  then the algorithm performs the loop in (7) without performing the loop in (6). Thus, the algorithm shifts all labels in the last row of $T$ one position to the left. The leftmost  label in this row is placed into the box $x$ and label $i+1$ is inserted into the rightmost box of the last row of $T$. This certainly creates a SSYT.  In this case, it is impossible for $T$ to contain all labels $i$ in the last row. Therefore, in the reverse reading word of $T'$, there is an $i$ before $i+1$ and thus the word  is a lattice permutation. It is straight forward to see that, in this case, for two different tableaux $T_1, T_2 \in LR(\gamma\times\sigma\times (\beta/\eta)^*, \beta')$   the algorithm produces different tableaux in $LR(\gamma\times \sigma \times ((\b+(1))/\eta)^*,(\b+(1))')$. Hence, we obtain an injection from $LR(\gamma\times\sigma\times (\beta/\eta)^*, \beta')$ into $LR(\gamma\times \sigma \times ((\b+(1))/\eta)^*,(\b+(1))')$.

For the remainder of the proof  assume that $\b_i<i$   and $\eta_1=1$. Then, we must have $\b_{i+1}=1$.  Let $T$ be a tableau in $LR(\gamma\times\sigma\times (\beta/\eta)^*, \beta')$ with at least one label $i$ in a row higher than the last.  Then the algorithm performs the loop in (7) without performing the loop in (6). As in the preceding paragraph, the resulting tableau is a SSYT whose reverse reading word is a lattice permutation. Moreover, if the algorithm is applied to two different tableaux $LR(\gamma\times\sigma\times (\beta/\eta)^*, \beta')$ such that  each tableaux has at least one label $i$ in a row higher than the last, the resulting tableaux are distinct.   
If  $T$ contains all labels $i$ in the last row, the algorithm performs the loop in (6) prior to the loop in (7). 
In this case, $T$ has to satisfy the following properties. (Recall that in this case $2 \leq j=\b_i\leq i-1$.)
\begin{itemize}
\item[(a)] For each $t$ such that $\min\{i,j+2\} \leq t \leq i$, row $i-t+1$ from the bottom has each box  filled with the label $t$ (\textit{i.e.}, the last row is filled with $i$, and, if $j <i-2$, the second to last row is filled with $i-1$, and so on until we reach row $(i-j-1)$  from the bottom). Moreover, $\ell(\eta)\geq \max\{1, i-j-1\}$. To see this, notice that in the superstandard tableau of $\b'$,  each label $i, i-1, \ldots, j+1$ appears an equal number of times while the number of labels $j$ is one more. 

\item[(b)] Because $\b'_1>\b'_2$, for each  label $p$ in the $(i-1)$-st column, the box directly to its right is either empty (\textit{i.e.}, not part of $(\b/\eta)^*$) or it contains a label greater than $p$. 

\item[(c)] The first column of $T$ contains the labels $1,2, \ldots, i$, with one label missing, in increasing order, from top to bottom. The missing label is between $1$ and $j+1$ but it cannot equal $j$ unless $j=2$. This is because, if $j>2$,  the number of labels $j-1$ equals the number of labels $j$ in the superstandard tableau of $\b'$. If label $j$ were missing from the first column of $T$, in the reverse reading word of $T$ we would have a  $j-1$ after the last $j$, which is impossible. 

\item[(d)]  Row $i-j$ from the bottom in $T$ is filled, from left  to right, with the labels $j, j+1, j+1, \ldots, j+1$ or with $j+1, j+1, \ldots, j+1$. Thus $q=j$ or $j+1$. The label directly above $q$  is exactly  $q-1$ (unless $j=2$, in which case, if $q=3$ the label directly above it could be $1$ or $2$).  If $q=j+1$ and $j>2$, row $i-j+1$ from the bottom in $T$ is filled with only $j$s.

\end{itemize}

If $T$ has all labels $i$ in the last row, the algorithm shifts labels up in column $i-1$ (the column of the rightmost $i$) until it reaches the row $i-j$ from the bottom, if $q=j$, or row $i-j+1$ form the bottom, if $q=j+1$. Then it continues  shifting  labels to the left in that row.  The leftmost label in this row is placed into the box $x$ and $i+1$ is inserted into the rightmost box of the last row. By properties (a), (b), and (d) above, $T'$ is a SSTY. If $j\neq 2$, the label placed in $x$ will always be $j$ and, since we have one more label $j$ than the number of labels $j+1$, the lattice permutation condition is not violated. If $j=2$, the label placed in $x$ is either $2$ or $1$. In either case the lattice permutation condition is not violated. Moreover, in the resulting tableau $T'$, there is a label $i$ directly above the label $i+1$. Thus $T' \in LR(\gamma\times \sigma \times ((\b+(1))/\eta)^*,(\b+(1))').$

Next, we will show that, if we apply the algorithm to   two different tableaux in $T_1, T_2 \in LR(\gamma\times\sigma\times (\beta/\eta)^*, \beta')$ with all labels $i$ in the last row, we obtain two distinct tableaux $T'_1, T'_2$. Consider first the case in which $j=\b_i=i-1$. Then the second to last row  of a tableau in $LR(\gamma\times\sigma\times (\beta/\eta)^*, \beta')$ with all labels $i$ in the last row must be filled with labels $i-1$. Therefore, $T_1$ and $T_2$ must differ in a row higher than the last two. Since the algorithm only affects labels in the last two rows, the resulting tableaux must be distinct. Now assume that $j=\b_i<i-1$. By property (a) above, $T_1$ and $T_2$ cannot differ in the last $i-j-1$ rows. By property (d) above, if $T_1$ and $T_2$ differ in row $i-j$ from the bottom, then they can only differ in the label $q$ (the leftmost label in this row). Suppose  $T_1$ has  $q=j$ while $T_2$ has $q=j+1$. Then, the $i$-th column of $T'_1$ is filled, from top to bottom, with the labels $1, 2, \ldots, j, j+2, j+3, \ldots i+1$.  The $i$-th column of $T'_2$ is filled, from top to bottom, with the labels $1, 2, \ldots, j-1, j+1, j+2, \ldots i+1$. Therefore, $T'_1\neq T'_2$. Now suppose  $T_1, T_2$ differ only in rows higher than row $i-j$ form the bottom. If they both have $q=j$,   the algorithm only affects the last $i-j$ rows of such a tableau and therefore $T'_1\neq T'_2$. If both $T_1, T_2$ have $q=j+1$, the algorithm only affects the last $i-j+1$ rows of such a tableau. Since tableau $T_1$ and $T_2$ do not differ in column $i-1$, and the algorithm shifts up in column $i-1$ and left in row $i-j+1$ from the bottom, we must have $T'_1\neq T'_2$.


Finally suppose that $T_1, T_2$ are tableaux in   $LR(\gamma\times\sigma\times (\beta/\eta)^*, \beta')$  such that $T_1$ has all labels $i$ in the last row and $T_2$ does not. Then the rightmost label in the second to last row is equal to $i$ in $T'_1$ and is equal to $i-1$ in $T'_2$. 

Thus,  if $\b'_i=i-1$ and $\eta_1=1$, the algorithm produces an injection from $LR(\gamma\times\sigma\times (\beta/\eta)^*, \beta')$ into $LR(\gamma\times \sigma \times ((\b+(1))/\eta)^*,(\b+(1))')$.  
\end{proof}

\begin{corollary}\label{sp-type1} If $\nu$ is a partition of type $1$ such that $(\nu_1^{\nu_1-1})\subseteq \nu$, then $\nu$ is covered by $\nu+(1)$.\end{corollary}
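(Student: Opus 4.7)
Since $\nu$ is of type $1$, the decomposition of $\nu$ per (C1) has $s=0$ and $\alpha=\emptyset$, so $\nu=\beta$. Condition (C2) then forces the candidate cover to be $\mu=\beta+(1)=\nu+(1)$. My plan is to verify that $s_{\mu'}s_{\mu^c}-s_{\nu'}s_{\nu^c}$ is Schur-positive when complements are taken in a square $(m^m)$ with $m\geq\mu_1+\ell(\mu)$, which is precisely the definition of $\mu$ covering $\nu$.

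First I would translate Schur-positivity into a pointwise inequality between Littlewood--Richardson numbers. By Remark \ref{stability:product}, any $\lambda$ appearing in either $s_{\nu'}s_{\nu^c}$ or $s_{\mu'}s_{\mu^c}$ has the form $\lambda=(\eta^c+\gamma,\sigma)$ for a unique triple $(\eta,\gamma,\sigma)$ with $\eta\subseteq\mu$ and $|\gamma|+|\sigma|=|\eta|$. Corollary \ref{cor-sym} then identifies
\[
c^{\lambda}_{\nu'\nu^c}=\bigl|LR(\gamma\times\sigma\times(\beta/\eta)^*,\beta')\bigr|,
\]
\[
c^{\lambda}_{\mu'\mu^c}=\bigl|LR(\gamma\times\sigma\times((\beta+(1))/\eta)^*,(\beta+(1))')\bigr|,
\]
so it suffices to show $c^{\lambda}_{\nu'\nu^c}\leq c^{\lambda}_{\mu'\mu^c}$ for every triple $(\eta,\gamma,\sigma)$.

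Next I would apply Theorem \ref{algorithm} term by term. The hypothesis $(\nu_1^{\nu_1-1})\subseteq\nu$ is exactly the assumption of that theorem, so for every $\lambda$ with $c^{\lambda}_{\nu'\nu^c}\neq 0$ the algorithm supplies an explicit injection between the two LR sets, yielding the desired inequality. For $\lambda$ with $c^{\lambda}_{\nu'\nu^c}=0$ the inequality is trivial. Summing over all $\lambda$ produces the Schur-positivity of $s_{\mu'}s_{\mu^c}-s_{\nu'}s_{\nu^c}$.

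The hard part has already been carried out in Theorem \ref{algorithm}, where one had to certify semistandardness, verify the lattice condition on the reverse reading word of the output tableau, and check injectivity across the various cases of $\eta_1$ and $\beta_i$. By comparison, this corollary is a packaging step: it uses Remark \ref{stability:product} to legitimize the uniform indexing of terms by $\lambda=(\eta^c+\gamma,\sigma)$, Corollary \ref{cor-sym} to convert the LR-tableau inequality into the coefficient comparison, and the reading of (C2) in the type $1$ case (where $s=0$) to pin down $\mu$ uniquely as $\nu+(1)$.
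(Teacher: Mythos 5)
Your proposal is correct and follows exactly the paper's intended route: the corollary is deduced from Theorem \ref{algorithm} by using Corollary \ref{cor-sym} to identify the coefficients $c^{\lambda}_{\nu'\nu^c}$ and $c^{\lambda}_{\mu'\mu^c}$ with the cardinalities of the two LR sets, and the injection supplied by the algorithm then gives $c^{\lambda}_{\nu'\nu^c}\leq c^{\lambda}_{\mu'\mu^c}$ for every $\lambda$. Nothing is missing.
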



In what follows we  show that if $\nu$ is of type 1 containing the rectangle $(\nu_1^{\nu_1-1})$  and $\mu$ does not satisfy  (C2), then $s_{\mu'}s_{\mu^c}- s_{\nu'}s_{\nu^c}$ is not Schur-positive.

\begin{proposition} \label{nsp-type1-first-col} If $\nu$ is a partition of type $1$ such that $(\nu_1^{\nu_1-1})\subseteq \nu$, then $\nu$ is not covered by $(\nu,1)$, the partition obtained from $\nu$ by adding a box at the end of its first column.
\end{proposition}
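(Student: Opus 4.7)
The plan is to exhibit a specific partition $\lambda$ with $\langle s_\lambda,\ s_{\mu'}s_{\mu^c}-s_{\nu'}s_{\nu^c}\rangle < 0$, where $\mu=(\nu,1)$. Choose $m\geq \nu_1+\ell(\nu)+1$ and set $d=m-\ell(\nu)$. Then $\mu^c$ equals $\nu^c$ with the single box at position $(d,m)$ removed, and $\mu'=\nu'+(1)$ has one more label of value $1$ in its type. By Corollary~\ref{cor-sym}, I can evaluate both coefficients as the number of Littlewood--Richardson tableaux of the disjoint skew shapes $\gamma\times\sigma\times(\nu/\eta)^*$ and $\gamma\times\sigma\times(\mu/\eta)^*$ of type $\nu'$ and $\mu'$, respectively.

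I would take $\lambda=(\eta^c+\gamma,\sigma)$ with $\eta$ obtained from $\nu$ by removing its southeast-most box and $\gamma,\sigma$ chosen so that column $m$ of $\lambda$ contains boxes in each of the $\nu_1$ consecutive rows $d,d+1,\ldots,d+\nu_1-1$; the hypothesis $(\nu_1^{\nu_1-1})\subseteq\nu$ is precisely what allows this vertical strip of length $\nu_1$ to fit in $\lambda$. To show $c^{\lambda}_{\nu'\nu^c}\geq 1$ I would exhibit an explicit Littlewood--Richardson tableau: in the simplest case $\nu_{\ell(\nu)}=1$ with $\gamma=\eta$ and $\sigma=\emptyset$, I take the superstandard tableau of $\eta$ (row $i$ filled with $i$'s) and fill the isolated box of $(\nu/\eta)^*$ with the label $1$, which has type exactly $\nu'$ and a reading word forming a lattice permutation. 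For $c^{\lambda}_{\mu'\mu^c}=0$, column $m$ of $\lambda/\mu^c$ contains $\nu_1$ strictly increasing boxes forced to be $1,2,\ldots,\nu_1$, and the adjacent columns of $\lambda/\mu^c$ (arranged via $\gamma,\sigma$) are each also of length $\nu_1$ and hence similarly forced. The resulting pointwise-determined filling has equal counts of every label $1,\ldots,\nu_1$, contradicting $\mu'_1=\nu'_1+1>\nu'_1\geq\nu'_2=\mu'_2$; when $\nu_1=1$, a simpler argument applies, since column $m$ of $\lambda/\mu^c$ is then of length $2>\ell(\mu')=1$ and cannot be column-strictly filled from a single label.

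The main obstacle is extending this construction uniformly across all type 1 partitions with $(\nu_1^{\nu_1-1})\subseteq\nu$. When $\nu_{\ell(\nu)}\geq 2$ or $\ell(\nu)>\nu_1+1$, the naive choice $\gamma=\eta$ violates $\ell(\gamma)\leq\nu_1$, so one must redistribute boxes between $\gamma$ and $\sigma$, or remove a different southeast corner of $\nu$ to form $\eta$, while still ensuring both that step one yields an LR tableau of type $\nu'$ and that step two yields the column-forcing mismatch. Once the witness $\lambda$ is secured, Corollary~\ref{cor-sym} delivers $\langle s_\lambda,\ s_{\mu'}s_{\mu^c}-s_{\nu'}s_{\nu^c}\rangle<0$, completing the proof that $(\nu,1)$ does not cover $\nu$.
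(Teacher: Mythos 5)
Your high-level strategy is the same as the paper's: produce a single witness $\lambda=(\eta^c+\gamma,\sigma)$ with $c^{\lambda}_{\nu'\nu^c}>c^{\lambda}_{(\nu,1)'(\nu,1)^c}$ and evaluate both coefficients via Corollary~\ref{cor-sym}. But your choice of witness is wrong, and the gap you flag at the end is not a technicality --- it is the whole proof. Taking $\eta$ to be $\nu$ minus its southeast-most box fails already for $\nu=(2,2,2)$ (type $1$, $(\nu_1^{\nu_1-1})=(2)\subseteq\nu$): there $\eta=(2,2,1)$, and with $\gamma=\eta'=(3,2)$, $\sigma=\emptyset$ the shape $\lambda/\nu^c$ is $(3,2)$ together with one isolated box, giving $c^{\lambda}_{\nu'\nu^c}=1$; but $\lambda/\mu^c$ is $(3,2)$ together with \emph{two} boxes lying in distinct rows and distinct columns (the removed box $(3,2)$ of $\nu$ and the added box $(4,1)$ of $\mu$ are not vertically adjacent when $\nu_{\ell(\nu)}\geq 2$), and one checks $c^{\lambda}_{\mu'\mu^c}=2$. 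So your witness makes the difference \emph{more} positive, not negative. Your proposed repairs (redistribute between $\gamma$ and $\sigma$, or remove a different corner) do not help here --- $(2,2,2)$ has only one removable corner, and keeping $|\eta|=|\nu|-1$ leaves the skew shape with too much room to absorb the extra label $1$. There is also an internal inconsistency: with $|\eta|=|\nu|-1$ the geometry of $\lambda=(\eta^c+\gamma,\sigma)$ cannot produce a column-$m$ vertical strip of length $\nu_1$ in the rows $d,\dots,d+\nu_1-1$ you describe (that would force $\ell(\eta)\leq\ell(\nu)-\nu_1+1$, which is incompatible with $\eta\subseteq\nu$ and $|\eta|=|\nu|-1$ once $\nu_1\geq 3$). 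Finally, note that the superstandard tableau of $\eta$ has type $\eta$, not $\eta'$; you need $\gamma=\eta'$ filled superstandardly for the type to come out to $\nu'$.

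The idea you are missing is the paper's: take $\eta$ much smaller, namely the lexicographically \emph{minimal} partition with $c^{(\eta^c+\gamma,\sigma)}_{\nu'\nu^c}\neq 0$, which turns out to be the reordering of the rows of $\beta/(\beta\cap\beta')$ (plus a small column correction $\xi$ when $\beta_{i}<i-1$). For that $\eta$ the paper shows $c^{\eta^c+\eta'}_{\nu'\nu^c}=1$ by exhibiting the unique Littlewood--Richardson filling of $\eta'\times(\beta/\eta)^*$ column by column, and then the vanishing of $c^{\eta^c+\eta'}_{(\nu,1)'(\nu,1)^c}$ follows from lex-minimality itself (the added box forces any contributing $\eta$ for $(\nu,1)$ to be lex-larger), rather than from a label-counting argument on a near-full shape. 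In its present form your argument establishes the claim only in the special case $\nu_{\ell(\nu)}=1$ and leaves the general case open.
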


\begin{proof} Let $\nu=\beta$ be as in the statement of the proposition. If $\b_1=i$ and $i\geq 2$, we have $(i^{i-1},2,1) \subseteq \b$. 

We claim that the smallest partition $\eta$ in lexicographic order such that 
 $\ds c_{\b' \b^c}^{\eta^c+\gamma, \sigma}\neq 0$ for some $\gamma, \sigma$ is  $$\eta=(\b_{i+1}, \b_{i+2}, \ldots, \b_{\ell(\b)}, \xi),$$ where $\xi=\emptyset$ if $i=1,2,3$ and, if $i \geq 4$, then $$\xi=\left\{ \begin{array}{ll} \emptyset & \mbox{if\ \ } i-1 \leq \b_i \leq i \\  & \\ (1^{i-1-\b_i}) & \mbox{if\ \ }  2 \leq \b_i<i-1.\end{array} \right.$$ Note that $\eta$ is obtained by reordering the parts of $\b/(\b \cap \b')$ to form a partition.
 
 Since $\ds c_{\b' \b^c}^{\eta^c+\gamma, \sigma}=|LR(\gamma\times\sigma\times (\beta/\eta)^*, \beta')|$, the columns of  $(\b/\eta)^*$ cannot be longer that $i$. Thus $(\b_{i+1}, \b_{i+2}, \ldots, \b_{\ell(\b)})\subseteq \eta$. Moreover, if $T \in LR(\gamma\times\sigma\times (\beta/\eta)^*, \beta')$, the reverse reading word of $T$ is a lattice permutation.  As explained in Remark \ref{horizontal-strips}, the rows of $T$ form horizontal strips in the superstandard tableau of $\b'$. 

 The largest horizontal strip in the superstandard tableau of $\b'$, read from right to left, ends in 
 \[\ldots, \b_i, \underbrace{i, i, \ldots, i}_\textrm{$i-1$ \mbox{times}}.\] 
 However, if $i \geq 4$ and $\b_i<i-1$, we can't have a label $\b_i$ (or smaller) at the end of the first column of $(\b/\eta)^*$ because this column has length $i-1$. Therefore, the last row of $(\b/\eta)^*$ contains only labels $i$. Then, since we are looking for the smallest $\eta$ in lexicographic order, the last column on $(\b/\eta)^*$ must have length less than $i$.  So, $\eta$ must contain $(\b_{i+1}, \b_{i+2}, \ldots, \b_{\ell(\b)},1)$. Similarly, since a label $\b_i$ cannot be placed in the  first column of $(\b/\eta)^*$ in any of the last $i-1-\b_i$ rows, $\eta$ must contain $(\b_{i+1}, \b_{i+2}, \ldots, \b_{\ell(\b)}, \xi)$ with $\xi$ as above.

Let $\eta= (\b_{i+1}, \b_{i+2}, \ldots, \b_{\ell(\b)}, \xi)$. Then, the partition $s_{(\eta^c+\eta')}$ appears in $s_{\b'}s_{\b^c}$ with multiplicity $1$, \textit{i.e.}, $\ds c_{\b' \b^c}^{\eta^c+\eta'}=1$. To see this, note that the only LR tableau of shape $\eta'\times (\b/\eta)^*$ and type $\b'$ is the one in which, $\eta'$ is filled to form a superstandard tableau and the columns of $(\b/\eta)^*$ are filled as follows from top to bottom: 

If $\b_i=i$, all columns are filled with $1, 2, \ldots, i$.

If  $\b_i=i-1$, the first column is filled with $1, 2, \ldots, i-1$ and  the next $i-1$ columns are filled with $1, 2, \ldots, i$.  

If $i \geq 4$ and $\b_i<i-1$, the first column is filled with $1, 2, \ldots, \b_i, \b_{i}+2, \ldots, i$, the next $i-\b_i-1$ columns are filled with $2, 3, \ldots, i$, the next $\b_i-1$ columns are filled with $1, 2, \ldots, i$, and  the right-most column is filled with $1, 2, \ldots, \b_i+1$.  

Since $\eta$ is the smallest partition in lexicographic order such that  $\ds c_{\b' \b^c}^{\eta^c+\gamma, \sigma}\neq 0$ for some $\gamma, \sigma$, it follows that $$\ds c_{(\b,1)' (\b,1)^c}^{\eta^c+\eta'}= 0.$$  Therefore, $(\b,1)$ does not cover $\b$. 
\end{proof}

\begin{theorem}\label{nsp-type1-restricted} If $\nu$ is a partition of type $1$ such that $(\nu_1^{\nu_1-1})\subseteq \nu$, then $\nu$ is not covered by any partition $\mu$ with $ \nu \subseteq \mu $, $|\mu|=|\nu|+1$, and $\mu \neq \nu+(1)$. 
\end{theorem}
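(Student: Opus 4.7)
The plan is to mimic the strategy of Proposition~\ref{nsp-type1-first-col}: for each candidate $\mu \neq \nu+(1)$ with $\nu \subseteq \mu$ and $|\mu|=|\nu|+1$, I will exhibit a partition $\lambda$ such that $c_{\nu'\nu^c}^{\lambda}>0$ but $c_{\mu'\mu^c}^{\lambda}=0$, which forces the coefficient of $s_\lambda$ in $s_{\mu'}s_{\mu^c}-s_{\nu'}s_{\nu^c}$ to be negative. Writing $\nu=\beta$ with $\beta_1=i$, the hypothesis $(i^{i-1})\subseteq\nu$ forces $\nu_1=\nu_2=\cdots=\nu_{i-1}=i$. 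Consequently, every outer corner of $\nu$ at which a box can be added lies either at position $(1,i+1)$ (yielding the permitted $\mu=\nu+(1)$) or at some position $(k,\nu_k+1)$ with $i\leq k\leq\ell(\nu)+1$ and $\nu_{k-1}>\nu_k$ (with the convention $\nu_{\ell(\nu)+1}=0$). The case $k=\ell(\nu)+1$, i.e.\ $\mu=(\nu,1)$, is exactly Proposition~\ref{nsp-type1-first-col}, so it remains to address $\mu=\nu+e_k$ with $i\leq k\leq\ell(\nu)$ and $\nu_{k-1}>\nu_k\geq 1$.

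For such $\mu$ I retain the lex-minimal partition $\eta=(\beta_{i+1},\ldots,\beta_{\ell(\beta)},\xi)$ from the proof of Proposition~\ref{nsp-type1-first-col} and set $\lambda=\eta^c+\eta'$. That proof already yields $c_{\nu'\nu^c}^{\lambda}=1$, realised by a unique tableau $T\in LR(\eta'\times(\nu/\eta)^*,\nu')$ whose filling is explicitly described. By Corollary~\ref{cor-sym}, showing $c_{\mu'\mu^c}^{\lambda}=0$ amounts to proving $LR(\eta'\times(\mu/\eta)^*,\mu')=\emptyset$. Since $\mu/\eta$ equals $\nu/\eta$ together with the single box at $(k,\nu_k+1)$ while $\mu'=\nu'+e_{\nu_k+1}$, the extra box falls in a determined row of $(\mu/\eta)^*$ and the extra label is $\nu_k+1\in\{2,\ldots,i\}$. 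I then argue in each of the subcases from the proof of Proposition~\ref{nsp-type1-first-col} (namely $\beta_i=i$, $\beta_i=i-1$, and, for $i\geq 4$, $2\leq\beta_i\leq i-2$) that the rigid filling $T$ forces a unique candidate label at the new position, and that this candidate either violates the strict column-increase of an SSYT or produces a reverse reading word that is not a lattice permutation --- the latter because the content increase $\nu'\to\mu'$ takes place in an interior column of $\nu'$, rather than at the trailing label $i$ of the superstandard tableau of $\nu'$.

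The chief obstacle I foresee is the subcase $k=i$, where the added box lies in the very row whose length $\beta_i$ dictates the correction term $\xi$; here the lex-minimal $\eta_\mu$ associated to $\mu$ can coincide with $\eta_\nu$ (for instance when $\nu_i=i-1$, so that $\mu_i=i$ and $\xi_\mu=\xi_\nu=\emptyset$), so the $\lambda$ chosen above no longer separates $\nu$ from $\mu$ and a different witness must be produced --- presumably by enlarging $\eta$ slightly while keeping $c_{\nu'\nu^c}^{\lambda}>0$, or by choosing different $\gamma,\sigma$ in the decomposition of Corollary~\ref{cor-sym}. A potentially cleaner alternative is Proposition~\ref{conj} ($\omega$-symmetry): since $\mu'=\nu'+e_{\nu_k+1}$ with $\nu_k+1\leq i$, each bad $\mu$ corresponds to adding a box in one of the first $i$ columns of $\nu'$, which suggests a dual, column-centric lattice-permutation argument. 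However, by Proposition~\ref{lem:conjugate-types}(a) $\nu'$ need not be of type~$1$, so Proposition~\ref{nsp-type1-first-col} does not transfer verbatim, and the direct analysis on $\nu$ outlined above remains the principal route.
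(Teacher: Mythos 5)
Your proposal diverges from the paper's proof and, as you yourself suspect, it has a genuine gap at the subcase $k=i$. Reusing the single lex-minimal witness $\eta$ of Proposition~\ref{nsp-type1-first-col} for every bad $\mu$ does not work: when the box is added at the outer corner $(i,\nu_i+1)$, the class $\lambda=\eta^c+\eta'$ can survive in $s_{\mu'}s_{\mu^c}$. Concretely, take $\nu=(3,3,2,1)$ (type $1$, $i=3$, $(3^2)\subseteq\nu$), so $\eta=(1)$, and $\mu=(3,3,3,1)$. Then $\eta'\times(\mu/\eta)^*=(1)\times\bigl((3,3,3,2)/(2)\bigr)$ admits the filling with rows (top to bottom) $1$; $1$; $1,1,2$; $2,2,3$; $3,3$, whose reverse reading word $1\,1\,2\,1\,1\,3\,2\,2\,3\,3$ is a lattice word of content $\mu'=(4,3,3)$. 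Hence $c_{\mu'\mu^c}^{\eta^c+\eta'}\geq 1$ and your chosen $\lambda$ does not separate $\nu$ from $\mu$. Since this is exactly the case you leave to "a different witness must be produced," the proof is incomplete precisely where it most needs an argument; the remaining cases $k>i$ are also only asserted ("the rigid filling forces a unique candidate label"), not verified.

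The paper closes this gap by organizing the cases differently: it parametrizes $\mu$ by the column $p$ ($1\leq p\leq i$) at whose bottom the new box sits, and for $p\geq 2$ it deletes the first $p-1$ columns of $\nu$. Because $(\nu_1^{\nu_1-1})\subseteq\nu$, the truncation $\tilde{\nu}=\nu^{(p-1)}$ has $\tilde{\nu}_j=j$ for $j=\tilde{\nu}_1$, so it again satisfies the hypothesis of Proposition~\ref{nsp-type1-first-col}, and adding the box to $\nu$ in column $p$ corresponds to forming $(\tilde{\nu},1)$. The witness $\tilde{\eta}$ for $\tilde{\nu}$ versus $(\tilde{\nu},1)$ is then transported back by gluing the deleted columns onto it, giving $\varepsilon=(\nu'_1,\ldots,\nu'_{p-1},(\tilde{\eta})')'$ with $c_{\nu'\nu^c}^{\varepsilon^c+\varepsilon'}=1$ and $c_{\mu'\mu^c}^{\varepsilon^c+\varepsilon'}=0$. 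In particular the witness genuinely depends on $\mu$ (through $p$), which is exactly what your single-$\eta$ strategy is missing. If you want to salvage your approach, the fix is essentially to adopt this column-truncation reduction rather than to search for ad hoc enlargements of $\eta$ in the $k=i$ case.
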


\begin{proof} Let $\nu=\beta$ be as in the statement of the proposition. If $\b_1=i$ and $i \geq 2$, we have $(i^{i-1},2,1) \subseteq \b$.   Let $\mu$ be a partition as in the statement of the theorem. Then, $\mu$ is obtained from $\nu$ by adding a box at the end of its $p$th column, for some $1 \leq p \leq i$. If $p=1$, we are in the case of Proposition \ref{nsp-type1-first-col} and the statement of the theorem is true. Suppose $p\geq 2$. Consider the partition $\tilde{\nu}=\nu^{(p-1)}$ obtained from $\nu$ by removing its first $p-1$ columns. Then, by Proposition \ref{nsp-type1-first-col}, $\tilde{\nu}$ is not covered by $(\tilde{\nu},1)$. Let $j=\tilde{\nu}_1$. Since $\ds \tilde{\nu}_j=j$, it follows from the proof of Proposition \ref{nsp-type1-first-col} that $\tilde{\eta}= (\tilde{\nu}_{j+1}, \tilde{\nu}_{j+2}\ldots, \tilde{\nu}_{\ell(\tilde{\nu})})$ is such that $\ds c_{\tilde{\nu}' \tilde{\nu}^c}^{\tilde{\eta}^c+\tilde{\eta}'}=1$ and $\ds c_{(\tilde{\nu},1)'(\tilde{\nu},1)^c}^{\tilde{\eta}^c+\tilde{\eta}'}=0$.\medskip

Let $\varepsilon$ be the partition obtained by adjoining  $\tilde{\eta}$ to the right of the first $p-1$ columns of $\nu$, \textit{i.e.}, $\varepsilon=(\nu'_1, \nu'_2, \ldots, \nu'_{p-1}, (\tilde{\eta})')'$. Then, $\tilde{\nu}/\tilde{\eta}=\nu/\varepsilon$ and $|LR(\tilde{\eta}'\times (\tilde{\nu}/\tilde{\eta})^*, \tilde{\nu}')|= |LR(\varepsilon'\times ({\nu}/\varepsilon)^*, {\nu}')|$. Similarly, $|LR(\tilde{\eta}'\times ((\tilde{\nu},1)/\tilde{\eta})^*, (\tilde{\nu},1)')|=|LR(\varepsilon'\times ({\mu}/\varepsilon)^*, {\mu}')|$. Therefore,  $\ds c_{{\nu}' {\nu}^c}^{\varepsilon^c+\varepsilon'}=1$ and $\ds c_{{\mu}' {\mu}^c}^{\varepsilon^c+\varepsilon'}=0$. Thus, $\mu$ does not cover $\nu$. 
\end{proof}


\subsection{Partitions of type 1 with small width}

In this section we show that, if $\b_1\leq 4$, then Conjecture \ref{conjecture} holds. 

\begin{proposition}
 If $\nu$ is a partition of type 1 with $\nu_1=1, 2,$ or $3$, then Conjecture \ref{conjecture} is  true. Moreover, if $\nu$ is a partition of type 1 with $\nu_1=4$ and $\nu'_4 \geq 3$, then Conjecture \ref{conjecture} is  true. 
\end{proposition}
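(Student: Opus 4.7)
The plan is to verify that in each of the listed cases the containment $(\nu_1^{\nu_1-1}) \subseteq \nu$ holds, which reduces the proposition to a direct application of Corollary \ref{sp-type1} (for sufficiency) and Theorem \ref{nsp-type1-restricted} (for necessity).

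First I would check the containment case by case. For $\nu_1 = 1$ the rectangle $(1^0) = \emptyset$ is vacuously contained in $\nu$. For $\nu_1 = 2$ only the single row $(2) \subseteq \nu$ is required, which is immediate from $\nu_1 = 2$. For $\nu_1 = 3$, since $\nu$ is of type 1, the defining condition (with $s = 0$ and $\beta_1 = 3$) forces $\nu$ to contain the staircase $(3, 3, 2, 1)$, and in particular $(3, 3)$. For $\nu_1 = 4$ with $\nu'_4 \geq 3$, the hypothesis on the fourth column says $\nu_1, \nu_2, \nu_3 \geq 4$, hence $(4, 4, 4) \subseteq \nu$.

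Once the containment is in place, Corollary \ref{sp-type1} immediately gives that $\mu := \nu + (1)$ covers $\nu$. This is the ``if'' direction of the conjecture, because for $\nu = \beta$ of type 1 (so $s = 0$ and $\alpha = \emptyset$) the partition prescribed by (C2) is $\beta + (s^s, 1) + \alpha = \beta + (1) = \nu + (1)$. The ``only if'' direction is the content of Theorem \ref{nsp-type1-restricted}: under the same hypothesis $(\nu_1^{\nu_1-1}) \subseteq \nu$, no partition $\mu \neq \nu + (1)$ with $|\mu| = |\nu| + 1$ and $\nu \subseteq \mu$ can cover $\nu$, so any $\mu$ violating (C2) fails to cover $\nu$.

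A small bookkeeping point worth stating is that when $(\nu_1^{\nu_1}) \subseteq \nu$, the conjugate $\nu'$ is itself of type 2 (as noted in the remark following Proposition \ref{lem:conjugate-types}), so (C2) also places a condition on $\mu'$. Writing $\nu' = (t^t) + \alpha''$ with $\beta'' = \emptyset$ and $t = \nu_1$, the partition required by (C2) with respect to $\nu'$ is $(t^t, 1) + \alpha'' = (\nu', 1) = (\nu + (1))'$, so the two instances of (C2) agree and no inconsistency arises. I do not anticipate any real obstacle, since the entire argument amounts to verifying that the hypotheses of the earlier results are met in each case.
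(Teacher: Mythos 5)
Your proposal is correct and matches the paper's proof, which is exactly the one-line observation that in each listed case $(\nu_1^{\nu_1-1})\subseteq\nu$, so that Corollary \ref{sp-type1} and Theorem \ref{nsp-type1-restricted} apply. Your case-by-case verification of the containment and the consistency check on $\nu'$ are more detail than the paper supplies, but the argument is the same.
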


\begin{proof}
In each of these cases $\nu$  contains $(\nu_1^{\nu_1-1})$ and the conjecture follows from Theorems \ref{sp-type1} and \ref{nsp-type1-restricted}. 
\end{proof}

We now consider partitions $\nu$ of type 1 with $\nu_1=4$ and $\nu'_4=2$. To show that $\nu$ is covered by $\nu+(1)$, for each partition $\l=(\eta^c+\gamma, \sigma)$ such that $\ds c_{\nu'\nu^c}^{\l}\neq 0$, we give an algorithm that assigns to each $T\in LR(\gamma\times\sigma\times (\beta/\eta)^*, \beta')$ a unique tableau in $LR(\gamma\times \sigma \times ((\b+(1))/\eta)^*,(\b+(1))')$. The labels in the superstandard tableau of $\nu'$ are $1, 2, 3, 4$ and there are exactly two labels $4$.

\subsubsection*{\bf Algorithm:} Input a tableau $T\in LR(\gamma\times\sigma\times (\beta/\eta)^*, \beta')$.
 \begin{enumerate}
\item[{\bf (1)}] If $\eta_1=4$, let $v=5$. Go to (8). 
\item[{\bf (2)}] If $\eta_1<4$, then $5$ bumps the rightmost  label in the last row of $T$.  Go to (3).
\item[{\bf (3)}] If all labels  $4$ are in the last row of $T$, go to (4). Otherwise, go to (6). 
\item[{\bf (4)}] $4$ bumps the rightmost label in the second to last row of T. Go to (5)
\item[{\bf (5)}] Let $v$ be the last bumped label. If there is a label $3$ in a row higher than the second to last, then go to (6). Otherwise, $v$ bumps the rightmost label in the third to last row of $T$. Go to (6).  
\item[{\bf (6)}] Let $v$ be the last bumped label and let $(j,k)$ be its position in $T$.   If $v$ is strictly less than the label in box  $(j+1, k-1)$ or this box is empty, then $v$ bumps the label in position $(j, k-1)$ (\textit{i.e.}, directly to its left). Go to (7). Otherwise, $v$ bumps the label in position $(j+1, k-1)$. Go to (7).
\item[{\bf (7)}] Let $v$ be the last bumped label. If the label directly to its left is empty, go to (8). Otherwise, go to (6).
\item[{\bf (8)}] Place $v$ in box $x$. Output tableau $T'$. {\bf STOP.}
 \end{enumerate}

\begin{theorem}\label{algorithm2} Suppose  $\nu=\b$ is a partition of type 1 with $\b_1=4$ and $\b'_4=2$. Then, for each partition  $\l=(\eta^c+\gamma, \sigma)$ such that $\ds c_{\nu'\nu^c}^{\l}\neq 0$,  the above algorithm provides an injection 
$$LR(\gamma\times\sigma\times (\beta/\eta)^*, \beta') \hookrightarrow LR(\gamma\times \sigma \times ((\b+(1))/\eta)^*,(\b+(1))').$$

\end{theorem}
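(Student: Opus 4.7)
The plan is to follow the scheme used in the proof of Theorem \ref{algorithm}, dividing into cases according to the behavior of the algorithm and verifying (i) that $T'$ is semistandard, (ii) that its reverse reading word is a lattice permutation, and (iii) that the assignment $T \mapsto T'$ is injective. The shape and type of $T'$ are automatic from the construction. Since $\nu$ is of type $1$ with $\beta_1=4$ and $\beta'_4=2$, we have $\beta_1=\beta_2=4$, $\beta_3=3$, $\beta_4\geq 2$, and the superstandard tableau of $\beta'$ contains exactly two labels $4$. This structural rigidity is the main tool throughout.

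The case $\eta_1=4$ is handled exactly as the analogous step of Theorem \ref{algorithm}: the skew shape is a disjoint union $\gamma\times \sigma\times (\beta/\eta)^*\times(1)$, so the algorithm merely places a $5$ in $x$ and the resulting tableau is trivially Littlewood--Richardson. For $\eta_1<4$, I would split on whether $T$ has both of its labels $4$ in the last row or not. In the easier subcase (a $4$ occurs higher up), the cascading bump triggered in (4) is not performed, and steps (6) and (7) only modify the last row of $T$: the $5$ enters from the right, entries shift one cell to the left along the last row, and the leftmost label falls into $x$. Semistandardness is immediate from the semistandardness of the last row of $T$, and the lattice permutation condition holds because the higher $4$ ensures a $4$ precedes the new $5$ in the reverse reading word.

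The main subcase is when both $4$'s sit in the last row of $T$. Here the algorithm first bumps a $4$ into the second to last row and then, conditionally on the absence of a $3$ higher up, bumps a $3$ into the third to last row, after which the bumped entry slides horizontally to $x$. Semistandardness has to be checked at each of the cascading bumps, using the branching rule in step (6) that compares the bumped label $v$ with the entry at position $(j+1,k-1)$; the rigidity coming from $\beta_3=3$ (which forces at most three cells per column of $\beta$ beyond the first two) limits the local configurations that can occur and reduces the verification to a finite case analysis. The lattice condition is preserved because there are only two labels $4$ to begin with, and any upward bump preserves the relative order of $3$'s and $4$'s in the reverse reading word.

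The hardest step will be proving injectivity in this last subcase, since the branching in step (6) means that outwardly similar outputs could in principle come from different inputs. The plan is to invert the algorithm by identifying, in $T'$, the column containing the newly placed $5$ and the columns whose top entries increased; these data pin down the bumping path uniquely. One then peels back the path row by row, recovering the label displaced from each row and reinserting it below, to reconstruct $T$. The explicit control afforded by $\beta'_4=2$ (so that at most two $4$'s move around) together with the fact that any $3$ moving up leaves a detectable trace, should make the inversion unambiguous. A final short argument, based on inspecting the last two rows of $T'$, distinguishes outputs coming from the two subcases and completes the proof of injectivity.
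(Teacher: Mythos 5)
Your plan follows the paper's proof essentially step for step: the case $\eta_1=4$ is dispatched as in Theorem \ref{algorithm}, the case of a label $4$ above the last row reduces to a leftward shift of the last row, and the case of both labels $4$ in the last row is settled by a finite case analysis (the paper enumerates the configurations explicitly according to $\eta_1\in\{1,2\}$, $\beta'_3\in\{3,4\}$, and $\eta_2$) followed by a direct pairwise injectivity check, which amounts to the same thing as your proposed inversion of the bumping path. One small correction: $\beta_3=3$ does not bound the height of column $3$ of $\beta$ (e.g.\ $\beta=(4,4,3,3,3,2,1)$ is of type $1$ with $\beta'_4=2$ and $\beta'_3=5$); the finiteness of the case analysis instead comes from the assumption that both labels $4$ lie in the last row of $T$, which forces the columns containing them to have height at most $4$ and hence $\beta'_2=4$ and $\beta'_3\le 4$.
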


\begin{proof} Let $\lambda=(\eta^c+\gamma, \sigma)$, with $\eta \subseteq \beta$ and $|\gamma|+|\sigma|=|\eta|$, be a partition such that $c^{\lambda}_{\beta'\beta^c}\neq0$.  We input a tableau  $T\in LR(\gamma\times\sigma\times (\beta/\eta)^*, \beta')$ into the algorithm. The algorithm produces a tableau $T'$. 
By construction, the shape of the tableau $T'$  is $\gamma\times \sigma \times ((\b+(1))/\eta)^*$ and the type is $(\b+(1))'$. We  show that $T'$ is a SSYT whose reverse reading word is a lattice permutation and the map obtained   is an injection from $LR(\gamma\times\sigma\times (\beta/\eta)^*, \beta')$ into $LR(\gamma\times \sigma \times ((\b+(1))/\eta)^*,(\b+(1))')$.  

If $\eta_1=4$, by the  argument of Theorem \ref{algorithm} when $\eta_1=i$, $T'$ is a SSYT whose reverse reading word is a lattice permutation and $T \rightarrow T'$ is an injection from $LR(\gamma\times\sigma\times (\beta/\eta)^*, \beta')$ into $LR(\gamma\times \sigma \times ((\b+(1))/\eta)^*,(\b+(1))')$.

 If $\eta_1<4$ and  $T$ is a tableau with less than two labels $4$ in the last row, the algorithm skips steps (4) and (5) and performs the loop in (6). Thus, the algorithm  shifts all labels in the last row of $T$ one position to the left. The leftmost label in this row is placed into the box $x$ and label $5$ is inserted into the rightmost box in the last row of $T$. As explained in the proof of Theorem \ref{algorithm}, the resulting tableau is in  $LR(\gamma\times \sigma \times ((\b+(1))/\eta)^*,(\b+(1))')$ and applying the algorithm to two different tableaux $T_1, T_2 \in LR(\gamma\times\sigma\times (\beta/\eta)^*, \beta')$ with less than two labels $4$ in the last row produces different tableaux in $LR(\gamma\times \sigma \times ((\b+(1))/\eta)^*,(\b+(1))')$.

 We now consider the case in which $T \in LR(\gamma\times\sigma\times (\beta/\eta)^*, \beta')$ is a tableau with both labels $4$ in the last row. We will explicitly perform the algorithm on all such tableaux to see that the resulting tableau $T'$ is in $LR(\gamma\times \sigma \times ((\b+(1))/\eta)^*,(\b+(1))')$. Moreover, this will  allow us to conclude that, if $\eta_1<4$, the algorithm  produces an injection from $LR(\gamma\times\sigma\times (\beta/\eta)^*, \beta')$ into $LR(\gamma\times \sigma \times ((\b+(1))/\eta)^*,(\b+(1))')$.  

 Note that in all diagrams below we only show the relevant labels. Since the algorithm does not affect $\gamma$ and $\sigma$, we only show the $(\b/\eta)^*$ part of $T$. It is possible for boxes without labels to not be part of the skew shape (the reader should imagine such boxes as not being part of the tableau) and columns of  boxes without labels could be higher than shown, as long as the resulting diagram is a skew shape. Labels that are not marked do not change when the algorithm is performed.

If both labels $4$ are in the last row of $T$, we have $\eta_1=1$ or $2$. Since $\b'_1\geq 5$, we also have $\eta_2 \geq 1$. We consider several cases depending on the shape of $\eta$ and $\b$.  \medskip
 
\noindent{Case \bf ( I
) $\eta_1=1$}. Then the last row of $T$ is  {\tiny \young(q44) }. 
 Since the third column of $T$ ends in $4$ and $\b'_2\geq 4$, we must have $\b'_2=4$.
Therefore, the number of labels $2$ is equal to four and we have at least five labels $1$. Moreover, $\b'_3=3$ or $4$. We consider these two cases separately. 
\begin{itemize}
\item[(a)]  Case $\b'_3=4$. Then the first three columns of $T$ are 
\begin{center} \tiny \young(:::\hfill,:11\hfill,:22\hfill,p33,q44)
 \end{center} 
 Since the number of labels $2$ equals the number of labels $3$, $q \neq 2$. Thus $q=3$ and $p=1$ or $2$. The result of applying the algorithm is shown below. 
 \begin{picture}(400,55)
\put(100,10) {\tiny \young(:::\hfill,:11\hfill,:22\hfill,p33,344)}
\put(165,25){$\rightarrow$}
\put(200,10) {\tiny \young(::::\hfill,::11\hfill,::22\hfill,:p\rth\rf,\rth\rth4\rfi)}
\put(108,-3){\small $T$}
\put(215,-3){\small $T'$} 
 \end{picture}
\vskip .1in
\noindent If $\eta_1=1$, $\b'_3=4$, and $T_1, T_2$ are  distinct tableaux in $LR(\gamma\times\sigma\times (\beta/\eta)^*, \beta')$ with both labels $4$ in the last row, then $T_1$ and $T_2$ can only differ in label $p$ or in a row higher than the second row from the bottom. Since the algorithm only affects the last two rows but does not change  the label $p$, the resulting tableaux $T'_1$ and $T'_2$ are different. 

\item[(b)]  Case $\b'_3=3$. Then the first three columns of $T$ are \begin{center} \tiny \young(:::\hfill,::1\hfill,:a2\hfill,pb3,q44)
 \end{center}
 If $q=2$, then $p=1$. We must also have $b=3$ because the number of  labels $2$ is one more than the number of  labels $3$.   
 Then, 
 
 \begin{picture}(400,58)
\put(100,10){\tiny \young(:::\hfill,::1\hfill,:a2\hfill,133,244)}
\put(165,25){$\rightarrow$}
\put(200,10){\tiny \young(::::\hfill,:::1\hfill,::a2\hfill,:1\rth\rf,\rt\rth4\rfi)}
\put(108,-3){\small $T$}
\put(215,-3){\small $T'$}
\end{picture}
\vskip .1in
\noindent  Suppose now that  $q=3$.  Below we give the result of the algorithm depending on whether  $b=2$ or $3$. Note that if $b=3$,  there are no labels $3$ in a row higher than the second to last row in $T$. 
 
\begin{picture}(400,58)
\put(25,13){\tiny  \young(:::\hfill,::1\hfill,:12\hfill,p23,344)}
\put(70,25){$\rightarrow$}
\put(95,13){\tiny \young(::::\hfill,:::1\hfill,::12\hfill,:\rt\rth\rf,\rp34\rfi)}
\put(200,13){\tiny \young(:::\hfill,::1\hfill,:a2,p33,344)}
\put(245,25){$\rightarrow$} 
\put(260,13){\tiny \young(::::\hfill,:::1\hfill,::\rt\rth,:p3\rf,\ra34\rfi)}
\put(35,0){\small $T$} 
\put(110,0){\small $T'$} 
\put(210,0){\small $T$}
\put(270,0){\small $T'$}
\end{picture}

\noindent Suppose $\eta_1=1$, $\b'_3=3$, and $T_1, T_2$ are   distinct tableaux in $LR(\gamma\times\sigma\times (\beta/\eta)^*, \beta')$ with both labels $4$ in the last row. If $T_1, T_2$ differ in label $q$, then $T'_1, T'_2$ either differ in the second label (from the top) in column $4$ or else in the first label in column $2$ (recall that the first column of  each $T'_1, T'_2$ consists of only the box $x$).  If $T_1$ and $T_2$ have the same label $q$, and $q=2$, then they differ only in rows higher than the second to last row. Since the algorithm in this case only affects the last two rows, the resulting tableaux are different. If $T_1$ and $T_2$ have the same label $q$, and $q=3$, then $T'_1$ and $T'_2$ either differ in the  label in the second box in column four or else in at least one of the following labels:  label in box $x$ or  the first box of the second column or  a label in a row higher than the third to last row. 
\end{itemize}
 
\noindent {\bf Case ( II ) $\eta_1=2$}. Then, the last row of $T$ consists entirely of $4$. Because the type $\b'$ allows only for the labels $1,2,3,4$, we must have $\b'_3=3$ or $4$.  We distinguish between the two cases below. 
\begin{itemize}
\item[(a)] Case $\b'_3=4$. Then the first two columns of $T$ are 
\[  \tiny \young(:::\hfill,:1\hfill\hfill,:2\hfill\hfill,p3\hfill,44)\]
 Below is the result of the algorithm depending on whether $\eta_2=1$ or $2$.
 
 \begin{picture}(400,58)
 \put(25,13){\tiny \young(:::\hfill,:11\hfill,:22\hfill,p33,44)}
 \put(70,25){$\rightarrow$}
 \put(95,13){\tiny \young(::::\hfill,::11\hfill,::22\hfill,:\rth\rth\rf,\rp4\rfi)}
 \put(200,13) {\tiny  \young(:::\hfill,:1\hfill\hfill,:2\hfill\hfill,p3,44)}
 \put(245,25){$\rightarrow$}
 \put(270,13){\tiny \young(::::\hfill,::1\hfill\hfill,::2\hfill\hfill,:\rth\rf,\rp4\rfi)}
\put(35,0){\small $T$}
\put(105,0){ \small $T'$}
\put(210,0){\small $T$}
\put(280,0){\small$T'$}
\end{picture}

\noindent Suppose $\eta_1=2$, $\eta_2=1$ or $2$, $\b'_3=4$, and $T_1, T_2$ are  distinct tableaux in $LR(\gamma\times\sigma\times (\beta/\eta)^*, \beta')$ with both labels $4$ in the last row. Then  $T'_1, T'_2$ differ in the label in box $x$ or in a label in a row higher than the second to last row. 
  
\item[(b)] Case  $\b'_3=3$. Then  the first two columns of $T$ are 
\[ \tiny \young(:::\hfill,::\hfill\hfill,:a\hfill\hfill,pb\hfill,44)\]
If  $\eta_2=2$, we have

\begin{picture}(400,60)
\put(100,13){\tiny \young(:::\hfill,::\hfill\hfill,:a\hfill\hfill,pb,44)}
\put(165,25){$\rightarrow$}
\put(200,13){\tiny \young(::::\hfill,:::\hfill\hfill,::a\hfill\hfill,:\rb\rf,\rp4\rfi)}
\put(108,0){ \small $T$} 
\put(215,0){\small $T'$} 
\end{picture}
 
 \noindent Suppose $\eta_1=\eta_2=2$, $\b'_3=3$, and $T_1, T_2$ are  distinct tableaux in $LR(\gamma\times\sigma\times (\beta/\eta)^*, \beta')$ with both labels $4$ in the last row. Then $T'_1, T'_2$ will differ in at least one of the following labels: label in box $x$, or first label in the second column, or  a label in a row higher than the second to last row. 
 
 If $\eta_2=1$, then we must have $\b'_2=4$. If $p \neq 3$, there is a label $3$ in a row higher than the second to last row in $T$. Below we show the result of the algorithm depending on whether $p\neq 3$ or $p=3$. 
 
 \begin{picture}(400,60)
\put(25,15){ \tiny \young(:::\hfill,::1\hfill,:a2\hfill,pb3,44)}
\put(70,25){ $\rightarrow$}
\put(95,15){\tiny  \young(::::\hfill,:::1\hfill,::a2\hfill,:\rb\rth\rf,\rp4\rfi)} 
\put(200,15){\tiny \young(:::\hfill,::1\hfill,:a2,333,44)}
\put(245,25){$\rightarrow$}
\put(270,15){\tiny \young(::::\hfill,:::1\hfill,::\rt\rth,:33\rf,\ra4\rfi)}
\put(35,0){\small $T$}
\put(105,0){\small $T'$}
\put(210,0){\small $T$}
\put(280,0){ \small $T'$}
 \end{picture}
 
\noindent Suppose  $\eta_1=2$, $\eta_2=1$, $\b'_3=3$, and $T_1, T_2$ are  two distinct tableaux in $LR(\gamma\times\sigma\times (\beta/\eta)^*, \beta')$ with both labels $4$ in the last row. Then $T'_1, T'_2$ will differ in the second box in  column four or else in one of the following labels: label in $x$, or the label in the first box of the second column, or the label in the first box of the third column, or a label in a row higher than the third to last row. 
\end{itemize}
The discussion above shows that if $T \in LR(\gamma\times\sigma\times (\beta/\eta)^*, \beta')$, the algorithm produces a tableau $T' \in LR(\gamma\times \sigma \times ((\b+(1))/\eta)^*,(\b+(1))')$. Moreover, we showed that if $T_1, T_2$ are two distinct tableaux in $LR(\gamma\times\sigma\times (\beta/\eta)^*, \beta')$ with both labels $4$ in the last row, then the resulting tableaux $T'_1, T'_2$ are distinct. We also showed that if $T_1, T_2$ are two distinct tableaux in $LR(\gamma\times\sigma\times (\beta/\eta)^*, \beta')$ with less than two  labels $4$ in the last row, then the resulting tableaux $T'_1, T'_2$ are distinct.

Now suppose $T_1, T_2$ are two distinct tableaux in $LR(\gamma\times\sigma\times (\beta/\eta)^*, \beta')$ such that $T_1$ has both labels $4$ in the last row and $T_2$ has at most one label $4$ in the last row. Then, if $\eta_1=1$ or $\eta_1=\eta_2=2$, $T'_1$ has a label $4$ directly above the label $5$ while  $T'_2$ has a label $2$ or $3$ directly above the label $5$. If $\eta_1=2$ and $\eta_2=1$, then in $T'_1$ the label in $x$ is less than or equal to the first label in the second column while in $T'_2$ the label in $x$ is strictly greater  than the first label in the second column. Therefore, in either case $T'_1 \neq T'_2$. 
\end{proof}

\begin{corollary} If $\nu$ is a partition of type 1 with $\nu_1=4$ and $\nu'_4=2$, then $\nu+(1)$ covers $\nu$. 
\end{corollary}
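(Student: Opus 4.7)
The plan is to deduce the corollary directly from Theorem \ref{algorithm2} together with Corollary \ref{cor-sym}, mirroring the way Corollary \ref{sp-type1} was obtained from Theorem \ref{algorithm}. Set $\mu=\nu+(1)$ and let $m\geq \mu_1+\ell(\mu)$ so all complements are taken in the stable range. By Remark \ref{stability:product}, every partition $\lambda$ occurring with nonzero coefficient in either of the products $s_{\nu'}s_{\nu^c}$ or $s_{\mu'}s_{\mu^c}$ must have the form $\lambda=(\eta^c+\gamma,\sigma)$ with $\eta\subseteq\mu$ and $|\gamma|+|\sigma|=|\eta|$; note also that any such $\eta$ that contributes to $s_{\nu'}s_{\nu^c}$ is automatically contained in $\nu=\beta$, so the relevant skew shapes $(\beta/\eta)^*$ and $((\beta+(1))/\eta)^*$ are well defined.

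First I would apply Corollary \ref{cor-sym} to rewrite the two relevant coefficients as
\[
\langle s_\lambda,s_{\nu'}s_{\nu^c}\rangle=|LR(\gamma\times\sigma\times(\beta/\eta)^*,\beta')|,\qquad
\langle s_\lambda,s_{\mu'}s_{\mu^c}\rangle=|LR(\gamma\times\sigma\times((\beta+(1))/\eta)^*,(\beta+(1))')|.
\]
Next, I would invoke Theorem \ref{algorithm2}, which under the hypotheses $\beta_1=4$ and $\beta'_4=2$ supplies an explicit injection
\[
LR(\gamma\times\sigma\times(\beta/\eta)^*,\beta')\hookrightarrow LR(\gamma\times\sigma\times((\beta+(1))/\eta)^*,(\beta+(1))'),
\]
valid for every $\eta\subseteq\beta$ and every $\gamma,\sigma$ with $|\gamma|+|\sigma|=|\eta|$. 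Combining these two facts yields
\[
\langle s_\lambda,s_{\nu'}s_{\nu^c}\rangle\leq\langle s_\lambda,s_{\mu'}s_{\mu^c}\rangle
\]
for every partition $\lambda$ in the support, while for $\lambda$ outside the common stable support both sides vanish.

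Since the inequality of Schur coefficients holds for all $\lambda$, the difference $s_{\mu'}s_{\mu^c}-s_{\nu'}s_{\nu^c}$ is a non-negative integer combination of Schur functions, i.e.\ Schur-positive, which is exactly the statement that $\mu=\nu+(1)$ covers $\nu$. There is no real obstacle here: all the work has already been absorbed into the verification carried out in Theorem \ref{algorithm2}; the corollary is just the translation of that injective map into the language of Schur-positivity via the inner-product computation of Corollary \ref{cor-sym}.
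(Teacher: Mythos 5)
Your proposal is correct and is essentially the paper's own (implicit) argument: the corollary is stated immediately after Theorem \ref{algorithm2} precisely because the injection of Littlewood--Richardson sets, combined with the coefficient interpretation in Corollary \ref{cor-sym}, gives $c_{\nu'\nu^c}^{\lambda}\leq c_{\mu'\mu^c}^{\lambda}$ for every $\lambda$ and hence Schur-positivity. The only refinement worth noting is that the theorem supplies the injection only for $\lambda$ with $c_{\nu'\nu^c}^{\lambda}\neq 0$, and for all other $\lambda$ the left-hand coefficient is zero, so the inequality holds trivially -- which you essentially observe.
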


\begin{proposition}\label{failure-sp-i=4}  If $\nu$ is a partition of type 1 with $\nu_1=4$ and $\nu'_4=2$, then $\nu$ is not covered by any partition $\mu$ such that  $\nu \subset \mu$, $|\mu|=|\nu|+1$, and $\mu \neq \nu+(1)$. 
\end{proposition}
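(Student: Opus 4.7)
The proof follows the same scheme as Theorem \ref{nsp-type1-restricted}: any $\mu$ as in the statement is obtained from $\nu$ by adjoining a single box at the bottom of some column $p \in \{1, 2, 3, 4\}$, and we split cases according to $p$. For $2 \leq p \leq 4$, the column-removal reduction of Theorem \ref{nsp-type1-restricted} carries over. Setting $\tilde\nu = \nu^{(p-1)}$, Lemma \ref{remove}(i) ensures that $\tilde\nu$ is of type 1, and a direct check shows $(\tilde\nu_1^{\tilde\nu_1 - 1}) \subseteq \tilde\nu$: when $p = 2$ we have $\tilde\nu_1 = 3 = \tilde\nu_2$, so $(3^2) \subseteq \tilde\nu$; when $p = 3$, $(\tilde\nu_1^{\tilde\nu_1 - 1}) = (2)$; when $p = 4$, $(\tilde\nu_1^{\tilde\nu_1 - 1}) = \emptyset$. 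Since $\mu^{(p-1)} = (\tilde\nu, 1)$, Proposition \ref{nsp-type1-first-col} supplies $\tilde\eta \subseteq \tilde\nu$ with $c^{\tilde\eta^c + \tilde\eta'}_{\tilde\nu' \tilde\nu^c} = 1$ and $c^{\tilde\eta^c + \tilde\eta'}_{(\tilde\nu, 1)'(\tilde\nu, 1)^c} = 0$; setting $\varepsilon = (\nu'_1, \ldots, \nu'_{p-1}, (\tilde\eta)')'$ as in Theorem \ref{nsp-type1-restricted} transfers these identities to $\nu$ and $\mu$, giving $c^{\varepsilon^c + \varepsilon'}_{\nu' \nu^c} = 1 \neq 0 = c^{\varepsilon^c + \varepsilon'}_{\mu' \mu^c}$, so $\mu$ does not cover $\nu$.

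For the remaining case $p = 1$, i.e., $\mu = (\nu, 1)$, we adapt the argument of Proposition \ref{nsp-type1-first-col} to the present setting (where the hypothesis $(4^3) \subseteq \nu$ no longer holds). The plan is to construct the smallest $\eta \subseteq \nu$ in lex order such that an LR tableau of shape $\eta' \times (\nu/\eta)^*$ of type $\nu'$ exists, verify this tableau is unique (so $c^{\eta^c + \eta'}_{\nu' \nu^c} = 1$), and then show $c^{\eta^c + \eta'}_{\mu' \mu^c} = 0$. The columns of $\nu^*$, from left to right, have heights $\nu'_4 = 2$, $\nu'_3$, $\nu'_2$, $\nu'_1 = \ell(\nu)$; since every column of $(\nu/\eta)^*$ must have height at most $4$ (the number of distinct labels in $\nu'$) and $\nu'_4 = 2$ supplies only two labels $4$, the minimal $\eta$ is forced to remove boxes from the top-left of $\nu$ in a specific pattern. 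A column-by-column tally of available labels, as in the proof of Proposition \ref{nsp-type1-first-col}, shows the LR filling is uniquely determined.

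The passage from $\nu$ to $\mu = (\nu, 1)$ adds a single box at the bottom of column $1$ of $\nu$, which after rotation becomes a new top box in column $4$ of $(\mu/\eta)^*$, and it increases the first part of the type from $\nu'_1$ to $\mu'_1 = \nu'_1 + 1$ (one extra label $1$). Consequently column $4$ of $(\mu/\eta)^*$ has height one more than column $4$ of $(\nu/\eta)^*$, while all other column heights are unchanged. Either this new height exceeds $4$, making a strictly increasing filling with labels in $\{1, 2, 3, 4\}$ impossible, or column $4$ becomes a third column of height $4$ in $(\mu/\eta)^*$, requiring a third label $4$ when only two are available. In either case no LR tableau exists, so $c^{\eta^c + \eta'}_{\mu' \mu^c} = 0$. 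The main obstacle is the explicit construction of $\eta$ together with a proof that the LR filling is unique in full generality; this proceeds by a case analysis based on the values of $\nu'_2$ and $\nu'_3$, analogous to the case analysis at the end of the proof of Proposition \ref{nsp-type1-first-col}.
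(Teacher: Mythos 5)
Your reduction of the cases $p=2,3,4$ to Proposition \ref{nsp-type1-first-col} via $\tilde\nu=\nu^{(p-1)}$ is correct and is exactly what the paper does (the paper phrases it as ``the argument in the proof of Theorem \ref{nsp-type1-restricted} shows that $\nu$ is not covered by any partition other than $\nu+(1)$,'' which is the same column-removal transfer you spell out). The problem is the case $p=1$, which is the actual content of the proposition. There you only state a plan --- find the lex-minimal $\eta$, show the LR filling is unique, show the coefficient vanishes for $(\nu,1)$ --- and then write that ``the main obstacle is the explicit construction of $\eta$ together with a proof that the LR filling is unique in full generality.'' That obstacle is the proof. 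The paper resolves it by exhibiting the witness explicitly: $\eta=(\beta_5,\beta_6,\ldots,\beta_{\ell(\beta)},\xi)$ with $\xi=\emptyset$ if $\beta'_3=3$ and $\xi=(1)$ if $\beta'_3\geq 4$ (i.e., $\eta$ is obtained by reordering the parts of $\beta/(\beta\cap\beta')$), and then checks $c_{\beta'\beta^c}^{\eta^c+\eta'}=1$ and $c_{(\beta,1)'(\beta,1)^c}^{\eta^c+\eta'}=0$. Without a concrete $\eta$, your argument does not close.

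A second, related gap: your dichotomy for why $c^{\eta^c+\eta'}_{\mu'\mu^c}=0$ (``either the new height exceeds $4$, or column $4$ becomes a third column of height $4$'') silently assumes that the rightmost column of $(\nu/\eta)^*$ already has height $3$ or $4$. If that column had height $\leq 2$, adding the box would produce a column of height $\leq 3$ and neither horn of your dichotomy applies. That the height really is $4-\ell(\xi)\in\{3,4\}$ is a consequence of the specific choice of $\eta$ above, so this step too depends on the construction you have deferred. You should supply $\eta$ explicitly (the two subcases according to whether $\beta'_3=3$ or $\beta'_3\geq 4$ are enough, given $\nu_1=4$ and $\nu'_4=2$) and verify the two coefficient computations for it.
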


\begin{proof} To prove the proposition,  it is enough to show that $(\nu,1)$ does not cover $\nu$ by finding a partition $\eta$ such that $\ds c_{\nu'\nu^c}^{\eta^c+\eta'} \neq 0$ and  $\ds c_{(\nu,1)'(\nu,1)^c}^{\eta^c+\eta'} = 0$. If we can show this, the argument in the proof of Theorem \ref{nsp-type1-restricted} shows that $\nu$ is not covered by any   partition other than $\nu+(1)$.
 
  Let $\eta=(\b_5, \b_6, \ldots, \b_{\ell(\b)}, \xi)$, where $\xi=\emptyset$ if $\b'_3=3$, and $\xi=(1)$ if $\b'_3\geq 4$. (Note that $\eta$ is obtained by reordering the parts of $\b/(\b \cap \b')$ to form a partition.) One can easily check that $\ds c_{\b'\b^c}^{\eta^c+\eta'}=1$ and $\ds c_{(\b,1)'(\b,1)^c}^{\eta^c+\eta'}=0$.
\end{proof} 

The results above lead to the following theorem. 

\begin{theorem} \label{i=4} If $\nu$ is a partition of type 1 with $\nu_1=4$, then Conjecture \ref{conjecture} is true. 
\end{theorem}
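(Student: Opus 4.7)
The plan is to assemble this theorem directly from the results established earlier in Section 3.2. Since $\nu$ is of type 1, condition (C1) on $\nu$ holds automatically, and condition (C2) forces $\mu = \nu + (1)$. Thus, proving Conjecture \ref{conjecture} in this setting amounts to verifying two claims: (a) that $\nu + (1)$ covers $\nu$; and (b) that no other partition $\mu$ with $\nu \subset \mu$ and $|\mu| = |\nu|+1$ covers $\nu$.

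Because $\nu$ is of type 1 with $\nu_1 = 4$, the definition of type 1 (with $s=0$, $\b_1 = i = 4$) forces $\nu = \beta$ to contain the staircase $(4,4,3,2,1)$. In particular $\nu'_4 \geq 2$, so there are only two cases to consider: $\nu'_4 \geq 3$ and $\nu'_4 = 2$. I would dispose of them in that order.

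In the first case, $\nu$ contains the rectangle $(\nu_1^{\nu_1 - 1}) = (4^3)$, so Corollary \ref{sp-type1} delivers claim (a) and Theorem \ref{nsp-type1-restricted} delivers claim (b). In the second case, $\nu$ does not contain $(4^3)$, but the specialized injection constructed in Theorem \ref{algorithm2} (and packaged in the corollary immediately following it) yields claim (a), while Proposition \ref{failure-sp-i=4} yields claim (b). Since these two cases exhaust all type 1 partitions with $\nu_1 = 4$, Conjecture \ref{conjecture} holds in this range.

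I do not anticipate a real obstacle in executing this combination step: all of the genuine combinatorial work — the tableau insertion algorithm for the $\nu'_4 = 2$ case and the lex-minimal partition construction used to defeat Schur-positivity for the wrong $\mu$ — is already contained in Theorems \ref{algorithm}, \ref{algorithm2}, Proposition \ref{nsp-type1-first-col}, and Proposition \ref{failure-sp-i=4}. The only thing to verify carefully is that the dichotomy $\nu'_4 \in \{2\} \sqcup \{3,4,\ldots\}$ really does cover every type 1 partition with $\nu_1 = 4$, which follows from Lemma \ref{properties of beta}(ii) applied with $i=4$ and $s=0$.
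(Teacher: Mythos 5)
Your proposal is correct and matches the paper's intent exactly: the paper states this theorem with only the remark ``The results above lead to the following theorem,'' and the intended assembly is precisely your case split into $\nu'_4\geq 3$ (handled by Corollary \ref{sp-type1} and Theorem \ref{nsp-type1-restricted} via the rectangle $(4^3)\subseteq\nu$) and $\nu'_4=2$ (handled by Theorem \ref{algorithm2}, its corollary, and Proposition \ref{failure-sp-i=4}). Your additional check that $\nu'_4\geq 2$ always holds, via Lemma \ref{properties of beta}(ii) with $i=4$, $s=0$, correctly confirms the dichotomy is exhaustive.
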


The  proof of Theorem \ref{algorithm2} gives a glimpse into the difficulty of proving the conjecture in general by matching Littlewood-Richardson tableaux.\medskip

There are certain similarities between the algorithm above (when $\nu_1=4$ and $\nu'_4=2$) and the algorithm given at the beginning of this section for the case when $\nu$ is a partition of type 1 such that $(\nu_1^{\nu_1-1})\subseteq \nu$. In both cases, if all  labels $i$ (\textit{i.e.}, highest label) are in the last row of a tableau $T$, one creates $T'$ by bumping labels up and then to the left (and possibly South-West) according to certain rules.  Otherwise, one just bumps labels to the left in the last row. One might ask why such an algorithm would not work for partitions of higher width. Below we give an example where the natural generalization of the algorithm above fails. 

\begin{example} Consider the partition $\nu=(5,5,5,3,3,1)$ of type 1 with $\nu_1=5$. Let $\eta=(1^3)$, $\gamma=(2,1)$, and $\sigma=\emptyset$. In Figure \ref{i=5} we show a particular tableau $T \in LR(\gamma\times\sigma\times (\nu/\eta)^*, \nu')$. 

\begin{figure}[ht]
\centering
{\tiny \young(::::::11,::::::2,::::1,::112,::223,1333,2444,3555)} 
\caption{ A tableau $T$ for which the corresponding $T'$ is not a SSYT}
\label{i=5}
\end{figure} 
Notice that all labels $5$ are in the last row and all labels $4$ are in the last two rows, but there is a label $3$ in a row higher than the last three rows. If the algorithm in the proposition above would apply, $6$ would bump the rightmost $5$ in the last row, $5$ would bump the rightmost $4$ in the second to last row, $4$ would bump the rightmost $3$ in the third row from the bottom. Then, we would start bumping left in the third to last row. However, the leftmost $3$ is larger than the label $2$, which is directly to its SW. According to the algorithm, label $3$ would have to bump $2$ and label $2$ would be placed into $x$. The resulting tableau is not a SSYT.

\end{example}


\subsection{Failure of Schur-positivity}
The \textit{only if} part of Conjecture \ref{conjecture} for partitions of type 1 states that, if $\nu$ is of type 1 and $\mu$ is a partition such that $\nu \subseteq \mu$, $|\mu|=|\nu|+1$, $\mu \neq \nu+(1)$, then   $s_{\nu'\nu^c}-s_{\mu'\mu^c}$ is not Schur-positive. 

Based on the proofs of Theorem \ref{nsp-type1-first-col} and Proposition \ref{failure-sp-i=4}, and many explicit examples performed with Maple and Sage, we make the following conjecture.\medskip

\noindent {\bf Lex-minimality Conjecture:} \textit{If $\nu=\b$ is of type 1, the smallest partition $\eta$ in lexicographic order such that $\ds c^{\eta^c+\gamma, \sigma}_{\b'\b^c} \neq 0$, for some $\gamma, \sigma$, is given by reordering the rows of $\b/(\b\cap \b')$ to form a partition.  Moreover, for this $\eta$, we have $\ds c^{\eta^c+\eta'}_{\b'\b^c}=1$.} \medskip

The Lex-minimality conjecture implies that, a partition $\nu$  of type 1 is not covered by $(\nu,1)$. Using the same argument as in Theorem \ref{nsp-type1-restricted}, this would be enough to show that  a partition $\nu$ of type 1 could  only be covered by a partition $\mu$ satisfying (C2).

 In the remainder of this section we prove the failure of Schur-positivity  of the expression $s_{\nu'\nu^c}-s_{\mu'\mu^c}$
for   partitions $\nu$ of type 1 satisfying a symmetry condition and partitions $\mu$  such that $\nu \subseteq \mu$, $|\mu|=|\nu|+1$, $\mu \neq \nu+(1)$. 

Recall that an  outer corner of a Young diagram, and thus a partition, is a position (outside the diagram) such that, if we add a box in that position we still obtain a Young diagram.

\begin{defi} \label{c-s} A partition $\nu$ is called \textit{corner-symmetric} if for every outer corner $(k,j)$ of $\nu$ with $k>1$, there exists a non-empty partition $\eta_{k,j}$ such that $\nu/\eta_{k,j}$ is a self-conjugate (non-skew) partition and $\nu/\eta_{k,j}$ contains box $(k-1,j)$ of $\nu$ but does not contain any box from the first $j-1$ columns of $\nu$.  
\end{defi}
The partition in Figure \ref{corner-symmetric} (a) is corner-symmetric. It has three outer corners (marked with $\bullet$) in positions $(7,1)$, $(5,3)$, and $(3,5)$. We can take $\eta_{7,1}= (5,5)$, $\eta_{5,3}= (5,5,2,2,2,2,2)$, and $\eta_{3,5}=(5,4,4,4,2,2)$.
The partition in Figure \ref{corner-symmetric} (b) is not corner-symmetric. For the outer corner $(6,1)$ there is no partition $\eta_{6,1}$ satisfying Definition \ref{c-s}. 


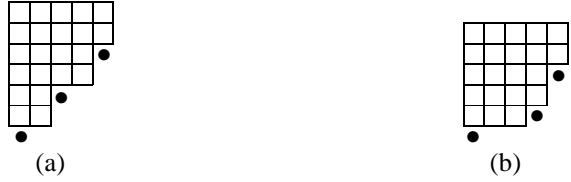
\begin{figure}[ht]
\begin{picture}(400,80)
\put(80,20){\tiny \yng(5,5,4,4,2,2)}
\put(113,44){$\bullet$}
\put(97,28){$\bullet$}
\put(82,13){$\bullet$}
\put(250,20){\tiny \yng(5,5,4,4,3)} 
\put(283,36){$\bullet$}
\put(275,21){$\bullet$}
\put(251,13){$\bullet$}
\put(90,3){\small (a)}
\put(260,3){\small (b)}
\end{picture}
\caption{  (a) Corner-symmetric and (b) non-corner-symmetric partitions }
\label{corner-symmetric}
\end{figure}

\begin{theorem} \label{cs-nsp}If $\nu$ is corner-symmetric and $\mu$ is such that $|\mu|=|\nu|+1$, $\nu \subseteq \mu$,  $\mu \neq\nu+(1)$, then $\mu$ does not cover $\nu$. 
\end{theorem}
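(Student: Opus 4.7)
The plan is to exhibit, for each $\mu$ with $\nu \subseteq \mu$, $|\mu|=|\nu|+1$, and $\mu \neq \nu+(1)$, a partition $\lambda$ whose multiplicity in $s_{\nu'}s_{\nu^c}$ strictly exceeds its multiplicity in $s_{\mu'}s_{\mu^c}$. The unique box of $\mu\setminus\nu$ lies at an outer corner $(k,j)$ of $\nu$ with $k\geq 2$, because the only outer corner in row $1$ is $(1,\nu_1+1)$, which would give $\nu+(1)$. I would invoke corner-symmetry to select $\eta := \eta_{k,j}$, so that $\rho := \nu/\eta$ is a self-conjugate partition containing $(k-1,j)$ and avoiding the first $j-1$ columns of $\nu$. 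Set $r := \rho_1 = \ell(\rho)$; then $\rho$ occupies the block of rows $a$ through $k-1$ and columns $j$ through $j+r-1$ in $\nu$, for a unique $a \geq 1$. Because $(k,j)$ is an outer corner we have $\nu_k = j-1$, so $\mu/\eta$ equals the partition $\tilde\rho := (\rho_1,\ldots,\rho_r, 1)$: the added box sits directly below the bottom-left cell of $\rho$, creating a new row of length $1$.

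Next, take $\lambda := (\eta^c + \eta', \emptyset)$. By Corollary~\ref{cor-sym}, the rotation invariance $s_{D^*} = s_D$ of skew Schur functions, and the observation that $\nu/\eta$ and $\mu/\eta$ are themselves partitions,
\[
\langle s_\lambda, s_{\nu'}s_{\nu^c}\rangle \,=\, c^{\nu'}_{\eta'\,\rho} \quad\text{and}\quad \langle s_\lambda, s_{\mu'}s_{\mu^c}\rangle \,=\, c^{\mu'}_{\eta'\,\tilde\rho}.
\]
Transposing, $\nu'/\eta' = (\nu/\eta)' = \rho' = \rho$ as a shifted partition shape. The number of LR tableaux of shape $\rho$ and type $\rho$ is $1$ (the superstandard tableau), so $c^{\nu'}_{\eta'\,\rho}=1$. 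Similarly $\mu'/\eta' = \tilde\rho' = (\rho_1+1,\,\rho_2,\ldots,\rho_r)$: a partition with only $r$ rows whose first row has $\rho_1+1$ cells, while the type $\tilde\rho$ provides only $\rho_1$ ones. In any SSYT all $1$s lie in row $1$, so the rightmost cell of row $1$ bears a label $\geq 2$, which heads the reverse reading word and violates the lattice permutation condition. Hence $c^{\mu'}_{\eta'\,\tilde\rho} = 0$, and $\langle s_\lambda, s_{\mu'}s_{\mu^c} - s_{\nu'}s_{\nu^c}\rangle = -1$, so $\mu$ does not cover $\nu$.

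The only delicate point is the geometric bookkeeping that identifies $\nu'/\eta'$ with $\rho$ and $\mu'/\eta'$ with $\tilde\rho'$. The key facts are $\eta_i = \nu_i$ for $i < a$ or $i > k$ and $\eta_i = j-1$ for $a \leq i \leq k$, which together with the self-conjugacy $\rho = \rho'$ force the transposed shape $\nu'/\eta'$ to be $\rho$ again. This self-conjugacy is precisely what keeps the first LR count at $1$ after transposing; and the geometry of the single added box ensures that, on the transposed side, it lands in the \emph{first} row of the shape, which is exactly the location that destroys the lattice permutation condition.
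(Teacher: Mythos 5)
Your proof is correct and follows essentially the same route as the paper: you choose the very same test partition $\lambda=\eta_{k,j}^c+\eta_{k,j}'$ and show its multiplicity is $1$ in $s_{\nu'}s_{\nu^c}$ and $0$ in $s_{\mu'}s_{\mu^c}$. The only difference is that you spell out the two multiplicity computations (via transposing to the straight shapes $\rho$ and $\tilde\rho'$ and counting $1$'s in the first row), which the paper merely asserts.
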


\begin{proof} Let $\mu\neq \nu+(1)$ be a partition such that $|\mu|=|\nu|+1$ and $\nu \subseteq \mu$. Then $\mu$ is obtained from $\nu$ by adding a box at an outer corner $(k,j)$ with $k>1$. Let $\eta_{k,j}$ be the partition from Definition \ref{c-s} (which is not necessarily unique) and let  $\l=\eta^c_{k,j}+\eta'_{k,j}$. Then, $\l/\nu^c= \eta'_{k,j} \times (\nu/\eta_{k,j})^*$ and 
 the partition $\l$ appears in $s_{\nu'}s_{\nu^c}$ with multiplicity one but does not appear in $s_{\mu'}s_{\mu^c}$. Therefore $\mu$ does not cover $\nu$. 
\end{proof}

\begin{corollary} \label{cs-type1} A corner-symmetric partition of type $1$  is covered by at most one partition. \end{corollary}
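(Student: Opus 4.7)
The plan is to deduce the corollary directly from Theorem \ref{cs-nsp}, which was just established. The key observation is that the set of candidate partitions $\mu$ that could conceivably cover a fixed partition $\nu$ is already very constrained by the definition of ``covers'': we need $\nu\subseteq\mu$ and $|\mu|=|\nu|+1$, so $\mu$ must be obtained from $\nu$ by adding a single box at an outer corner of $\nu$.

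First, I would enumerate the outer corners of $\nu$. Exactly one of them lies in row $1$, and it produces the partition $\nu+(1)$ (appending a box at the end of the first row). Every other outer corner has the form $(k,j)$ with $k>1$. Next, I would invoke Theorem \ref{cs-nsp}: under the standing hypothesis that $\nu$ is corner-symmetric, for each such $(k,j)$ with $k>1$ the resulting partition $\mu$ fails to cover $\nu$ (the theorem in fact produced an explicit witness partition $\lambda=\eta_{k,j}^c+\eta_{k,j}'$ appearing in $s_{\nu'}s_{\nu^c}$ but not in $s_{\mu'}s_{\mu^c}$). Consequently, the only remaining candidate is $\mu=\nu+(1)$, and at most one partition covers $\nu$.

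There is essentially no obstacle here beyond recognising that the corollary is an immediate bookkeeping consequence of Theorem \ref{cs-nsp}: once one knows that every ``non-trivial'' way of adding a box destroys Schur-positivity for corner-symmetric $\nu$, only the single first-row addition can survive. The type $1$ hypothesis is not used in the argument itself; it simply matches the setting of the subsection and ensures that $\nu+(1)$ is the partition singled out by condition (C2) of Conjecture \ref{conjecture}, so that the at-most-one bound is consistent with the conjectural picture. No existence claim is needed: the corollary only asserts uniqueness.
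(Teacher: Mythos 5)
Your proposal is correct and is exactly the argument the paper intends: Theorem \ref{cs-nsp} rules out every box-addition except the first-row one, so $\nu+(1)$ is the only possible cover. The paper leaves this as an immediate consequence (no written proof), and your observation that the type~$1$ hypothesis is not actually needed is also accurate.
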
 


\section {Partitions of Type $2$}


Recall that $\nu$ is a partition of type $2$ if  $\nu=\beta+(s^s)+\alpha$, where $s\geq 1$, $\alpha\neq \emptyset$.  Moreover, by Proposition \ref{lem:possible-types} (a), this decomposition is unique.  For the rest of the section we set $i=\b_1$. Then,  $ (i^{s+1}, i, i-1, i-2, \ldots, 1)\subseteq \beta$. Consider the partition $\mu=\beta+(s^s,1)+\a$.  In this section, we prove that, for some particular partitions $\nu$ of type 2,  $\mu$ is the only partition covering $\nu$.

\begin{theorem} \label{square-alpha} Suppose $\nu$ is a partition of type $2$ with $\beta=\emptyset$, \textit{i.e.},  $\nu=(s^s)+\alpha$,  with $s\geq 1$,  $\alpha\neq\emptyset$. Then Conjecture \ref{conjecture} is true. 

\end{theorem}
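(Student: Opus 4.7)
The plan is to reduce to the (already proved) type 1 case via conjugation. Since $\beta=\emptyset$, Proposition \ref{lem:conjugate-types}(c) gives that $\nu'$ is of type 1. A direct computation yields
\[
\nu'=(s^{s},\alpha'_1,\alpha'_2,\ldots,\alpha'_{\alpha_1}),
\]
because every row of $\nu$ has length at least $s$, so the first $s$ columns of $\nu$ all have height exactly $s$. In particular $\nu'_1=s$ and $(\nu'_1)^{\nu'_1-1}=(s^{s-1})\subseteq \nu'$, so the hypothesis of Corollary \ref{sp-type1} is satisfied. That corollary yields that $\nu'+(1)$ covers $\nu'$. Next I would compute the transpose: from $\nu'+(1)=(s+1,s^{s-1},\alpha'_1,\alpha'_2,\ldots)$ one sees that $(\nu'+(1))'$ has first $\ell(\alpha)$ rows equal to $s+\alpha_j$, the next $s-\ell(\alpha)$ rows equal to $s$, and a final row equal to $1$; that is, $(\nu'+(1))'=(s^s,1)+\alpha=\mu$.

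To transfer the covering relation to $\nu$, note that Proposition \ref{conj} states that if $\mu$ covers $\nu$ then $\mu'$ covers $\nu'$; applying the proposition to a hypothetical cover of $\nu'$ yields the reverse implication, so conjugation is a bijection between the set of partitions covering $\nu$ and those covering $\nu'$. Hence $\mu=(\nu'+(1))'$ covers $\nu$, which proves the ``if'' direction of Conjecture \ref{conjecture}.

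For the ``only if'' direction, I would suppose $\tilde\mu$ covers $\nu$ with $\tilde\mu\neq\mu$, $|\tilde\mu|=|\nu|+1$, and $\nu\subseteq\tilde\mu$. By the bijection above, $\tilde\mu'$ covers $\nu'$ with $\tilde\mu'\neq\nu'+(1)$. Since $\nu'$ is of type 1 and $(\nu'_1)^{\nu'_1-1}=(s^{s-1})\subseteq\nu'$, Theorem \ref{nsp-type1-restricted} forces every cover of $\nu'$ to equal $\nu'+(1)$, contradicting $\tilde\mu'\neq\nu'+(1)$. Hence $\mu$ is the unique cover of $\nu$ and the conjecture holds.

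The proof is essentially bookkeeping built on Corollary \ref{sp-type1} and Theorem \ref{nsp-type1-restricted}; there is no serious obstacle, only the verification that $\nu'$ contains the required rectangle $(\nu'_1)^{\nu'_1-1}$ and that the transpose of $\nu'+(1)$ coincides with the partition $\mu$ prescribed by (C2). The edge case $s=1$ (which forces $\ell(\alpha)\le 1$ and $\nu$ to be a single row) is covered by the same argument, since Corollary \ref{sp-type1} and Theorem \ref{nsp-type1-restricted} impose no lower bound on $\nu'_1$.
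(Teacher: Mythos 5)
Your proof is correct and takes essentially the same route as the paper's: both reduce to the type~1 case by noting (via Proposition \ref{lem:conjugate-types}(c)) that $\nu'$ is of type 1 with $\nu'_1=s$ and $(s^{s-1})\subseteq\nu'$, then invoke Corollary \ref{sp-type1} and Theorem \ref{nsp-type1-restricted} and transfer back through conjugation using Proposition \ref{conj}. Your explicit verification that $(\nu'+(1))'=(s^s,1)+\alpha$ is the partition prescribed by (C2) is a detail the paper leaves implicit, but it is the same argument.
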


\begin{proof} By Proposition \ref{lem:conjugate-types} (c), if $\nu$ is as in the statement of the theorem, then $\nu'$ is of type 1. Moreover, $\nu'_1=s$ and $(s^{s-1})\subseteq \nu'$. 
 The result now follows from Corollary \ref{sp-type1}, Theorem \ref{nsp-type1-restricted}, and Proposition \ref{conj}.  
\end{proof}

Before considering additional cases of partitions of type $2$, we introduce more notation and prove two helpful lemmas. 

Let $\lambda=(\eta^c+\gamma, \sigma)$ be a partition such that $c^{\lambda}_{\nu'\nu^c}\neq0$.  As we did with partitions of type 1 in the previous section, we attempt   to show that $c^{\lambda}_{\mu'\mu^c}\geq c^{\lambda}_{\nu'\nu^c}$ by matching each   tableau $T\in LR(\gamma\times\sigma\times (\nu/\eta)^*, \nu')$ with a unique tableau $T' \in LR(\gamma\times\sigma\times (\mu/\eta)^*, \mu')$. 

The shape  $\mu'$ is obtained from $\nu'$ by adding a box in position  $(i+1, s+1)$.  Thus, the type $\mu'$ provides us with  the same labels as the type $\nu'$ plus an additional label $i+1$. In $\gamma\times\sigma\times (\mu/\eta)^*$, we denote the box  $(\gamma\times\sigma\times (\mu/\eta)^*)/(\gamma\times\sigma\times (\nu/\eta)^*)$ by $x$. As before, we refer  to $x$ as the added box.  We refer to the row of $x$ in $T$ or $T'$ as $r_x$. Thus, $r_x$ is the $(s+1)$-st row from the bottom. 
In  $T$, we denote by $a$  the label directly to the right of $x$ (if it exists) and  by $b$ the label directly below $x$ (if it exists). Depending on the shape $\eta$,  there might not be a box directly below or directly to the right of $x$. 

\begin{example}In Figure \ref{type2-with-x}, we consider the diagram  in which $\nu=\b+(s^s)+\a$ with $\b=(2^5,1)$, $s=3$ and $\a=(2,1,1)$. Moreover,  $\eta=(3,1)$, $\gamma=\emptyset$ and $\sigma=(2,2)$. The figure shows $\gamma\times\sigma\times (\nu/\eta)^*$ with the place where the added box, $x$, would be placed. The label in the box to the right of $x$ is $a$ and the label in the  box below $x$ is $b$.  

\begin{figure}[ht]
\begin{picture}(400,70)
\put(150,0){\tiny \young(::::::::\hfil\hfil,::::::::\hfil\hfil,::::::\hfil,:::::\hfil\hfil,:::::a\hfil,:\hfil\hfil\hfil b\hfil\hfil,:\hfil\hfil\hfil\hfil\hfil,\hfil\hfil\hfil\hfil)} \put(183,25){\tiny$x$}
\end{picture}
\caption{}
\label{type2-with-x}
\end{figure}

If $\eta=(2^4)$, $\sigma=(4,4)$, and $\gamma=\emptyset$,  there is no box to the right of $x$ in $\gamma\times\sigma\times (\nu/\eta)^*$. If $\eta=(3,3,3,2)$, $\sigma=(4,4,3)$, and $\gamma=\emptyset$, there is no box directly below or directly to the right of $x$ in $\gamma\times\sigma\times (\nu/\eta)^*$. \end{example}


\begin{lemma}\label{obs1} Suppose $\nu=\b+(s^s)+\a$ is of type 2 with $\b_1=i$ and  $\lambda=(\eta^c+\gamma, \sigma)$ is such that  $c^{\lambda}_{\nu'\nu^c}\neq0$. Let $T\in LR(\gamma\times\sigma\times (\nu/\eta)^*, \nu')$. If $1\leq j\leq s$ and label $i+j$ appears in row $r$ of $T$, then $r$ is at most $j$ rows under $r_x$. In particular, the lowest row  in which label $i+1$ can appear is the $s$-th row from the bottom. 
\end{lemma}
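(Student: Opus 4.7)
The plan is to combine a direct calculation of the partition $\nu'$ with the horizontal-strip reading of the lattice-permutation condition given in Remark~\ref{horizontal-strips}. The key observation is that under the hypothesis $\nu=\b+(s^s)+\a$ of type $2$, the rows $i+1,i+2,\ldots,i+s$ of $\nu'$ all have length exactly $s$, producing a rigid $s\times s$ block of labels $i+1,\ldots,i+s$ in the superstandard tableau of $\nu'$. It is precisely this block that will prevent those labels from appearing too near the bottom of $T$.

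First I would establish $\nu'_{i+1}=\nu'_{i+2}=\cdots=\nu'_{i+s}=s$. By Lemma~\ref{properties of beta}(i), $\b_1=\b_2=\cdots=\b_{s+2}=i$, and the decomposition $\b+(s^s)+\a$ is set up so that $\ell(\a)\le s$. Hence row $k$ of $\nu$ has length $\b_k+s+\a_k\ge i+s+1$ for $k\le \ell(\a)$, length $i+s$ for $\ell(\a)<k\le s$, and length $\b_k\le i$ for $k>s$. Therefore the rows of $\nu$ of length at least $i+j$ (with $1\le j\le s$) are precisely rows $1$ through $s$, yielding the claim; in particular $\nu'_{i+j}-\nu'_{i+j+1}=0$ for $1\le j\le s-1$.

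Next I would apply Remark~\ref{horizontal-strips}. Set $\nu^{(1)}=\nu'$ and let $h_k$ denote the horizontal strip in $\nu^{(k)}$ carved out by the labels of the $k$-th row from the bottom of $T$, with $\nu^{(k+1)}=\nu^{(k)}/h_k$. The horizontal-strip condition forces $\nu^{(k+1)}_p\ge \nu^{(k)}_{p+1}$, so $h_k$ contains at most $\nu^{(k)}_p-\nu^{(k)}_{p+1}$ labels equal to $p$. I then prove by induction on $k$ the statement
\[
\nu^{(k)}_{i+j}=s \quad \text{for every } j \text{ with } 1\le j\le s-k+1.
\]
The base case $k=1$ is the calculation above. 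For the inductive step, if $j\le s-k$, then the hypothesis gives $\nu^{(k)}_{i+j}=\nu^{(k)}_{i+j+1}=s$, so the strip bound forces $h_k$ to contain no label $i+j$; hence $\nu^{(k+1)}_{i+j}=s$, completing the induction.

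Translating the inductive conclusion: if a label $i+j$ appears in the $k$-th row from the bottom of $T$ (that is, in $h_k$), then one cannot have $j\le s-k$, hence $k\ge s+1-j$. Because $r_x$ is the $(s+1)$-st row from the bottom, this says the row $r$ containing $i+j$ satisfies $r\ge s+1-j$, i.e.\ lies at most $j$ rows below $r_x$; specializing $j=1$ recovers the particular statement about label $i+1$. I do not anticipate a substantive obstacle here: once the rigid block $\nu'_{i+1}=\cdots=\nu'_{i+s}=s$ is in hand, the horizontal-strip induction is essentially mechanical. The only care needed is to keep the row-counting conventions consistent so that $r_x$ truly is the $(s+1)$-st row from the bottom of $\gamma\times\sigma\times(\nu/\eta)^*$.
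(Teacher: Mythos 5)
Your proof is correct and rests on the same core observation as the paper's: since $\nu'_{i+1}=\cdots=\nu'_{i+s}=s$, each of the labels $i+1,\ldots,i+s$ occurs exactly $s$ times, and the lattice condition then prevents these labels from sinking too low given that only $s$ rows lie below $r_x$. The paper phrases the middle step directly on the reverse reading word (the last occurrence of $i+j+1$ must lie strictly below the last occurrence of $i+j$, and chaining these strict descents exhausts the available rows), whereas you encode the same constraint through the horizontal-strip recursion of Remark~\ref{horizontal-strips} and an induction on the residual partitions $\nu^{(k)}$; the two formulations are equivalent and your version is sound.
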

\begin{proof} Since $T$ is of  type $\nu'$, each label  $i+j$,  $1 \leq j\leq s$, appears exactly $s$ times in $T$. Therefore, the lattice permutation condition forces $T$ to contain label $i+j+1$, $1 \leq j \leq s-1$, in a row below the row of the last label $i+j$. The statement of the lemma follows from the fact that there are at most $s$ rows under $r_x$. 
\end{proof} 
Note that it is possible for all labels $i+j$ (for some $1\leq j\leq s$) to be above $r_x$. 

\begin{corollary} Suppose $\nu=\b+(s^s)+\a$ is of type 2 with $\b_1=i$. For $\lambda=(\eta^c+\gamma, \sigma)$ such that  $c^{\lambda}_{\nu'\nu^c}\neq0$, let $T\in LR(\gamma\times\sigma\times (\nu/\eta)^*, \nu')$.  In $T$ there are at least $s$ letters $i$ occurring in  row $r_x$ or a row above it. 
\end{corollary}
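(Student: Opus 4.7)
The plan is to leverage Lemma 4.2 together with the lattice permutation condition on the reverse reading word of $T$. The first step is a short count: for $\nu=\beta+(s^s)+\alpha$ of type 2 with $\beta_1=i$, every part $\nu_r$ with $r\leq s$ satisfies $\nu_r=i+s+\alpha_r\geq i+1$, while $\nu_{s+1}=\beta_{s+1}=i<i+1$. Hence $\nu'_{i+1}=s$, so $T$ contains \emph{exactly} $s$ labels equal to $i+1$.

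Next, apply Lemma 4.2 with $j=1$: each of the $s$ occurrences of $i+1$ lies in $r_x$, above $r_x$, or in the single row immediately below $r_x$. Let $(r,c)$ be the position of the \emph{last} occurrence of $i+1$ in the reverse reading word of $T$; then $r\leq r_x+1$. Because $T$ is Littlewood--Richardson, the prefix of the reverse reading word through the cell $(r,c)$ contains all $s$ copies of $i+1$, and so must contain at least $s$ copies of $i$ by the lattice permutation condition.

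The remaining task is to locate these $s$ copies of $i$. Each of them lies at a cell $(r',c')$ that precedes $(r,c)$ in the reverse reading order, i.e., either $r'<r$ or $r'=r$ with $c'>c$. If $r\leq r_x$, then rows $r'<r$ and row $r$ itself are all at or above $r_x$, and we are done. If $r=r_x+1$, the cells of row $r$ with $c'>c$ have entries that are $\geq$ the entry at $(r,c)$ (since rows weakly increase left-to-right), hence $\geq i+1$, so none of them is an $i$; thus all $s$ copies of $i$ must actually lie in rows $r'\leq r_x$, proving the claim. The only subtlety to verify is the case analysis on $r$; once Lemma 4.2 is invoked and $\nu'_{i+1}=s$ is established, the rest is a direct consequence of the horizontal-strip/lattice structure already exploited throughout Section~4, so no new combinatorial obstruction appears.
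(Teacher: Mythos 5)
Your proof is correct and follows exactly the route the paper intends: the corollary is stated without proof as an immediate consequence of Lemma \ref{obs1} (your Lemma 4.2) applied with $j=1$, combined with the count $\nu'_{i+1}=s$ and the lattice permutation condition on the reverse reading word, which is precisely what you spell out. The only (harmless) caveat is that the argument, like the statement itself, implicitly assumes $i=\b_1\geq 1$ so that the lattice condition comparing labels $i$ and $i+1$ applies.
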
 

\begin{lemma}\label{obs2} Suppose $\nu=\b+(s^s)+\a$ is of type 2 with $\b_1=i$. For $\lambda=(\eta^c+\gamma, \sigma)$ such that $c^{\lambda}_{\nu'\nu^c}\neq0$, let $T \in LR(\gamma\times\sigma\times (\nu/\eta)^*, \nu')$. In $T$ there is at least one label $i+1$ in row $r_x$ or a row above it.
\end{lemma}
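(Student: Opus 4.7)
The strategy is a proof by contradiction. Assume every label $i+1$ in $T$ appears in a row strictly below $r_x$. By Lemma \ref{obs1} applied with $j=1$, each $i+1$ can sit at most one row below $r_x$, so all $s$ copies of $i+1$ must occupy row $r_x+1$, namely the $s$-th row from the bottom.

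I would then propagate this downward by induction on $j$: assuming that for some $1\le j<s$ all $s$ copies of label $i+j$ occupy exactly row $r_x+j$, I claim every $i+j+1$ is forced into row $r_x+j+1$. Indeed, if some $i+j+1$ lay in a row strictly above $r_x+j$, it would be encountered in the reverse reading word before any $i+j$; and if an $i+j+1$ occurred in row $r_x+j$ itself, the weak increase of labels along a row would place it to the right of every $i+j$ in that row, so it would again precede the $i+j$'s in the reverse reading order. Either situation produces a prefix of the reverse reading word containing more $(i+j+1)$'s than $(i+j)$'s, violating the lattice permutation condition. Hence every $i+j+1$ lies strictly below row $r_x+j$, and combining this with the upper bound from Lemma \ref{obs1} (each $i+j+1$ sits at most $j+1$ rows below $r_x$) forces all $s$ of them into row $r_x+j+1$.

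Iterating up to $j=s$ confines every copy of $i+s$ to row $r_x+s$, which is the bottom row of $\gamma\times\sigma\times(\nu/\eta)^*$. Since $\nu$ is of type 2 we have $\a\neq\emptyset$, so $\nu'_{i+s+1}=\alpha'_1\geq 1$, and $T$ must contain at least one label $i+s+1$. Repeating the reverse-reading-word argument with the pair $(i+s,i+s+1)$ shows this label cannot lie above row $r_x+s$ (no $i+s$ would have been read yet), cannot lie in row $r_x+s$ itself (it would appear to the right of every $i+s$ there and hence be read first), and cannot lie below row $r_x+s$ (no such row exists). This contradiction proves the lemma. The main technical point is the inductive step, which couples the upper bound of Lemma \ref{obs1} with the lower bound extracted from the lattice permutation condition; once this coupling pins each consecutive label one row deeper, the hypothesis $\a\neq\emptyset$ supplies the ``one label too many'' that runs out of room at the bottom of the diagram.
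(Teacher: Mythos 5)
Your proof is correct and follows essentially the same route as the paper's: assume all labels $i+1$ lie below $r_x$, cascade each label $i+j$ one row deeper using Lemma \ref{obs1} together with the lattice-word condition, and derive a contradiction because $\a\neq\emptyset$ forces a label $i+s+1$ below the bottom row. The paper states this cascade in one sentence; you have merely made the induction explicit.
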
 

\begin{proof} If all $i+1$ labels appear in the first row after $r_x$, then all labels $i+s$ appear in the last row of $T$. Since $\a \neq \emptyset$, there is a  label $i+s+1$ and it would have to be placed below the last row. 
\end{proof} 

 \begin{theorem}\label{one-col} Suppose $\nu=\beta+(s^s)+\a$ is a partition of type $2$ and $\b_1=i=1$. Then, $\mu=\beta+(s^s,1)+\a$ covers $\nu$. 
\end{theorem}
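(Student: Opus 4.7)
The plan is to prove the Schur-positivity of $s_{\mu'}s_{\mu^c} - s_{\n'}s_{\n^c}$ by showing, for every partition $\lambda = (\eta^c + \gamma, \sigma)$ with $c^{\lambda}_{\n'\n^c} \neq 0$, the coefficient-wise inequality $c^{\lambda}_{\n'\n^c} \leq c^{\lambda}_{\mu'\mu^c}$. By Corollary~\ref{cor-sym}, this reduces to constructing an injection
$$
\Phi : LR(\gamma \times \sigma \times (\n/\eta)^*,\, \n') \hookrightarrow LR(\gamma \times \sigma \times (\mu/\eta)^*,\, \mu'),
$$
which, summed over $\lambda$, yields Schur-positivity and hence that $\mu$ covers $\n$.

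First I would unpack the hypothesis. Since $\b_1 = 1$, condition (C1)(i) forces $\b = (1^\ell)$ for some $\ell \geq s + 2$, so $\n$ has the shape of $(s^s) + \a$ placed on top of a column of length $\ell$, and $\n' = (\ell,\, s^s,\, \a')$. The partition $\mu$ is obtained from $\n$ by adjoining one box at position $(s+1, 2)$; dually $\mu' = (\ell,\, s+1,\, s^{s-1},\, \a')$ is obtained from $\n'$ by adjoining a box at $(2, s+1)$. Consequently the type $\mu'$ has exactly one more label $2$ than the type $\n'$, and the only difference between the skew shapes $\gamma \times \sigma \times (\mu/\eta)^*$ and $\gamma \times \sigma \times (\n/\eta)^*$ is the single box $x$ sitting in row $r_x$ (the $(s+1)$-st row from the bottom).

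The map $\Phi$ will be defined by an insertion algorithm in the spirit of Theorems~\ref{algorithm} and~\ref{algorithm2}: starting from $T$, place an extra label $2$ in the added box $x$, and propagate any resulting violations of the column-strict or lattice condition through a bumping path that shifts labels up or to the left. The essential structural inputs will be Lemmas~\ref{obs1} and~\ref{obs2}: in $T$ there are at least $s$ labels $1$ at or above $r_x$, and at least one label $2$ at or above $r_x$. These reserves supply room to perform the bumps and certify that the reverse reading word of $\Phi(T)$ remains a lattice permutation, since inserting a $2$ at $x$ cannot outrun the $1$'s already lying above it in reading order.

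The main obstacle will be the case analysis on $\eta$: one must distinguish whether the boxes immediately to the right of $x$ and immediately below $x$ lie in the skew shape, and, if so, which labels $a$ and $b$ they carry in $T$. The easiest case is $b = 1$, where $2$ can be placed directly in $x$; the remaining cases require a bumping path that threads through column $1$ (which is dense with $1$'s) and along row $r_x$. Injectivity will be verified case by case by checking that the output $T'$ uniquely determines the path of bumps and the input labels, so that $T$ is recoverable from $T'$. A cleaner corroborating route worth pursuing is a reduction to Theorem~\ref{square-alpha}: removing the first column of $\n$ and $\mu$ produces $\n^{(1)} = (s^s) + \a$ with its unique cover $\mu^{(1)} = (s^s,1) + \a$, so one may try to lift an injection on the trimmed tableaux back to the original by separately accounting for the placements of the $\ell$ labels $1$ in the first column.
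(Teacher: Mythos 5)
Your overall strategy coincides with the paper's: fix $\lambda=(\eta^c+\gamma,\sigma)$, use Corollary \ref{cor-sym}, and build an injection $LR(\gamma\times\sigma\times(\nu/\eta)^*,\nu')\hookrightarrow LR(\gamma\times\sigma\times(\mu/\eta)^*,\mu')$, with Lemmas \ref{obs1} and \ref{obs2} as the structural inputs. But the proposal stops where the actual work begins: you never define the map. The paper's construction is not a bumping path at all; it is a two-case swap. If $T$ has no label $1$ strictly below $r_x$, place a $2$ in $x$ (legal because $a\geq 2$ since $(1^{s+2})\subseteq\b$, and because Lemma \ref{obs2} rules out $b=2$ in this situation). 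Otherwise, replace the highest, leftmost label $1$ below $r_x$ by a $2$ and put a $1$ in $x$; Lemma \ref{obs1} with $j=1$ guarantees that no label $2$ lies more than one row below $r_x$, which is exactly what makes this replacement column-strict and keeps the reverse reading word a lattice permutation. Identifying this dichotomy --- keyed to whether a $1$ occurs below $r_x$, not to the value of $b$ --- is the content of the proof, and it is absent from your write-up.

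Moreover, the one concrete case you do commit to is backwards: you call $b=1$ ``the easiest case, where $2$ can be placed directly in $x$.'' If the box directly below $x$ carries a $1$, then writing a $2$ (or a $1$) in $x$ violates column-strictness; $b=1$ is precisely a case in which $T$ must first be modified below $r_x$. The fallback route you sketch (strip the first column and lift from Theorem \ref{square-alpha}) is also not viable as stated: deleting the first column of $\nu$ removes the part $\ell=\nu'_1$ from the type, i.e.\ all $\ell$ labels $1$, but in an LR tableau of shape $\gamma\times\sigma\times(\nu/\eta)^*$ those labels are not confined to any single column of the skew shape, so there is no evident correspondence between the trimmed and untrimmed tableau sets through which to lift an injection.
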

\begin{proof} Suppose $\lambda=(\eta^c+\gamma, \sigma)$  is such that   $\eta \subseteq \nu$ and $|\gamma|+|\sigma|=|\eta|$ and $c^{\lambda}_{\nu'\nu^c}\neq0$. Let $T\in LR(\gamma\times\sigma\times (\nu/\eta)^*, \nu')$. We will assign to $T$ a tableau $T'\in LR(\gamma\times\sigma\times (\mu/\eta)^*, \mu')$ in a unique way. The tableau $T$ contains $s$ labels $2$ whereas the tableau $T'$ contains $s+1$ labels $2$.

Since the partition $(1^{s+2})$ is contained in $\b$, if there is a box in $T$ directly to the right of $x$, then $a \geq 2$. If there is no label $1$ in a row below $r_x$, then either there is no box directly below $x$ or  $b>2$. (If $b=2$ and all labels $1$ appear in row $r_x$ or a row above it, then all $s$ boxes in the row of $b$ and to the left of $b$ must be filled with label $2$.  This contradicts Lemma \ref{obs2}.)  

The tableau $T'$ is constructed from $T$ in the following way.  If there is no label $1$ in a row below $r_x$, place a label $2$ into the box $x$. Otherwise, find the highest, leftmost  label $1$ below $r_x$  and replace it by $2$. Then, place a label $1$ into  the box $x$.  By Lemma \ref{obs1}, there is no label $2$ in a row lower than the first row below $r_x$. Thus,  the tableau $T'$ obtained above is a SSYT. By  construction, its reverse reading word is a lattice permutation.

It is straightforward to see that this construction gives an injection \[LR(\gamma\times\sigma\times (\nu/\eta)^*, \nu') \hookrightarrow LR(\gamma\times \sigma \times (\mu/\eta)^*,\mu').\]
  \end{proof}
Recall that $\nu_{(s)}$ denotes the partition obtained from $\nu$ by removing its first $s$ rows.
\begin{proposition} \label{nsp-type2} Let $\nu=\beta+(s^s)+\a$ be a partition  of type $2$ with $\b_1=i$ and such that $\nu_{(s)}$ is corner-symmetric. Then $\nu$ is not covered by any partition $\mu$ with $\nu \subseteq \mu$, $|\mu|=|\nu|+1$, and $\mu\neq\beta+(s^s,1)+\a$. 
\end{proposition}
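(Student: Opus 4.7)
The plan is to prove the contrapositive of Schur-positivity by producing, for every partition $\mu$ with $\nu\subseteq\mu$, $|\mu|=|\nu|+1$, and $\mu\neq\beta+(s^s,1)+\alpha$, a specific partition $\lambda$ such that $c^{\lambda}_{\nu'\nu^c}\geq 1$ and $c^{\lambda}_{\mu'\mu^c}=0$. First I would classify the outer corners of $\nu$. Using that $\nu_r=\beta_r+s+\alpha_r\geq i+s$ for $r\leq s$ whereas $\nu_{s+1}=\nu_{s+2}=i$, the outer corners split into three groups: (a) upper corners $(k,j)$ with $1\leq k\leq s$ (necessarily with $j>i+s$), (b) the distinguished corner $(s+1,i+1)$ giving exactly the conjectured cover $\beta+(s^s,1)+\alpha$, and (c) lower corners $(k,j)$ with $k\geq s+3$, which are in bijection with the outer corners of $\nu_{(s)}$ of row index $k-s\geq 3>1$.

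For group (c), the argument parallels the proof of Theorem \ref{cs-nsp}. Given such a corner $(k,j)$, apply the corner-symmetry of $\nu_{(s)}$ at its outer corner $(k-s,j)$ to obtain a non-empty partition $\eta^{*}:=\eta_{k-s,j}$ with $\nu_{(s)}/\eta^{*}$ a self-conjugate non-skew partition containing the box $(k-s-1,j)$ of $\nu_{(s)}$ and no box in the first $j-1$ columns. Then I would lift $\eta^{*}$ to $\nu$ by setting $\tilde\eta=(\nu_1,\ldots,\nu_s,\eta^{*}_1,\eta^{*}_2,\ldots)$. This is a partition inside $\nu$ because $\nu_s\geq i\geq \eta^{*}_1$, and the skew shape $\nu/\tilde\eta$ equals $\nu_{(s)}/\eta^{*}$ sitting below row $s$, hence remains a self-conjugate non-skew partition. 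Setting $\lambda=\tilde\eta^c+\tilde\eta'$ and invoking Corollary \ref{cor-sym}, $c^{\lambda}_{\nu'\nu^c}$ equals the number of LR tableaux of shape $\tilde\eta'\times(\nu/\tilde\eta)^{*}$ and type $\nu'$, which is one thanks to the superstandard forcing on $\tilde\eta'$ and the self-conjugate shape of $(\nu/\tilde\eta)^{*}$, while $c^{\lambda}_{\mu'\mu^c}=0$ because the extra box at $(k,j)$ breaks the self-conjugate hole and obstructs the LR filling, exactly as in Theorem \ref{cs-nsp}.

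For group (a), the added box lies strictly to the right of column $i$ in the top $s$ rows, inside the $(s^s)+\alpha$ region. Here I plan to choose $\tilde\eta$ that fills rows $1,\ldots,s$ of $\nu$ entirely \emph{except} in the row $k$ of the added box, and in the lower part uses a self-conjugate hole arising from the corner-symmetric structure of $\nu_{(s)}$ (for instance, the natural single-box hole at $(s+1,i+1)$). Then $\nu/\tilde\eta$ is the disjoint union of a partial piece in row $k$ and a self-conjugate lower piece, whose LR filling of type $\nu'$ can be shown to be unique by tracking which labels are forced into the top row using the horizontal-strip characterization of Remark \ref{horizontal-strips}. The corresponding $\mu/\tilde\eta$ acquires an extra box at the right end of row $k$, incompatible with the forced filling above (either violating the horizontal-strip condition or the lattice permutation condition), giving $c^{\lambda}_{\mu'\mu^c}=0$.

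The main obstacle will be group (a): the corner-symmetric hypothesis is localized to $\nu_{(s)}$ and does not directly furnish an obstruction inside the $(s^s)+\alpha$ region. The delicate point is to construct a $\tilde\eta$ whose skew $\nu/\tilde\eta$ admits a unique LR completion of type $\nu'$ but whose $\mu$-analogue does not, since a priori the extra box could be absorbed by a different choice of $\eta$, $\gamma$, $\sigma$ in the decomposition of $\lambda$. Depending on whether the upper corner comes from a step of $\alpha$, from the transition between $\alpha$ and the square, or from the top-right corner $(1,\nu_1+1)$, the construction of $\tilde\eta$ may have to be tailored case-by-case, so a family of $\lambda$'s rather than a single canonical one is likely required to rule out every non-conjectured $\mu$ in group (a).
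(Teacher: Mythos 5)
Your classification of the outer corners is correct, and your treatment of group (c) (corners below the square, in the $\beta$ region) is essentially identical to the paper's: lift the witness $\eta_{k-s,j}$ supplied by corner-symmetry of $\nu_{(s)}$ by prepending $\nu_1,\ldots,\nu_s$, and conclude as in Theorem \ref{cs-nsp}. The problem is group (a), the corners in rows $1,\ldots,s$ inside the $(s^s)+\alpha$ region. There your argument is not a proof: you propose a $\tilde\eta$ that omits part of row $k$ and attaches a self-conjugate hole below, but you only assert (rather than establish) that the resulting skew shape has a unique LR filling of type $\nu'$ and that the $\mu$-analogue has none, and you yourself concede that the construction ``may have to be tailored case-by-case'' and that ``a family of $\lambda$'s \ldots is likely required.'' As stated, this is a genuine gap, and it is precisely the part of the proposition that requires a new idea: the corner-symmetry hypothesis on $\nu_{(s)}$ says nothing about the $\alpha$ region, so no direct analogue of the Theorem \ref{cs-nsp} witness is available there.

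The paper closes this gap not by a direct construction but by a reduction you did not consider: apply the involution $\omega$ (Proposition \ref{conj}), so it suffices to show $\mu'$ does not cover $\nu'$. Writing $\nu'=(\beta',s^s,\alpha')$, the partition $\chi=(\nu')_{(i)}=(s^s,\alpha')$ is of type 1 and contains the rectangle $(\chi_1^{\chi_1})=(s^s)$, so Theorem \ref{nsp-type1-restricted} applies and yields an explicit $\eta$ with $c_{\chi'\chi^c}^{\eta^c+\eta'}=1$ and $c_{\tilde\chi'\tilde\chi^c}^{\eta^c+\eta'}=0$, where $\tilde\chi$ is $\chi$ with the transported box added. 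Prepending $\beta'$ to $\eta$ then gives the witness for $\nu'$ versus $\mu'$. If you want to salvage your group (a) argument, this conjugation-plus-type-1 route is the missing ingredient; without it (or a fully worked-out direct construction with the uniqueness and vanishing claims proved), the proposition is not established.
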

\begin{proof}
If $\mu$ is obtained from $\nu$ by adding a box at an outer corner $(k,j)$ at the end of the $j$th column with $1 \leq j \leq i$, let $\eta_{k,j}$ be the partition obtained by applying Theorem \ref{cs-nsp} to $\nu_{(s)}$, 
and let $\eta=(\nu_1, \nu_2, \ldots, \nu_s, \eta_{k,j})$. Then, $\ds c_{\nu'\nu^c}^{\eta^c+\eta'}=1$ and $\ds c_{\mu'\mu^c}^{\eta^c+\eta'}=0$. Therefore, $\mu$ does not cover $\nu$.

Suppose now that $\mu$ is obtained from $\nu$ by adding a box at an outer corner at the end of the $j$th column with $i+s+1\leq j\leq \nu_1$, \textit{i.e.}, the added box is at the end of one of the columns of $\a$. We denote by $\tilde{\a}$ the partition obtained from $\a$ by adding this box. 
We will show that $\nu'$ is not covered by $\mu'$. Then, Proposition \ref{conj} will imply that $\nu$ is not covered by $\mu$.
Consider  $\nu'=(\b',s^s,\a')$ and let $\chi=(\nu')_{(i)}$. Then, $\chi=(s^s,\a')$ is a partition of type 1 with $\chi'_{\chi_1}\geq \chi_1$. By Proposition \ref{nsp-type1-restricted}, $\chi$ is not covered by $\tilde{\chi}=(s^s,\tilde{\a}')$. Moreover, the proof of Proposition \ref{nsp-type1-restricted} gives  a partition $\eta$ for which $\ds c_{\chi'\chi^c}^{\eta^c+\eta'}=1$ and $\ds c_{\tilde{\chi}'\tilde{\chi}^c}^{\eta^c+\eta'}=0$. Let $\tilde{\eta}=(\b', \eta)$. It is easy to see that $\ds c_{\nu(\nu')^c}^{\tilde{\eta}^c+\tilde{\eta}'}=1$ and $\ds c_{\mu(\mu')^c}^{\tilde{\eta}^c+\tilde{\eta}'}=0$. Therefore $\mu'$ does not cover $\nu'$. 
\end{proof} 

\begin{corollary} Suppose $\nu=\beta+(s^s)+\a$ is a partition of type $2$ and $\b_1=i=1$. Then $\nu$ is not covered by any partition $\mu$ with $\nu \subseteq \mu$, $|\mu|=|\nu|+1$, and $\mu\neq\beta+(s^s,1)+\a$. 
\end{corollary}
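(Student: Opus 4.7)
The plan is to derive the corollary as a direct application of Proposition \ref{nsp-type2}. To invoke that proposition, it suffices to verify that $\nu_{(s)}$ is corner-symmetric in the special case $\b_1 = i = 1$. Once this is established, the two cases treated in the proof of Proposition \ref{nsp-type2} (adding a box in a column of index between $1$ and $i$, or in a column of index between $i+s+1$ and $\nu_1$) exhaust all outer corners of $\nu$ other than position $(s+1, 2)$, which is precisely the one giving the excluded $\mu = \b + (s^s,1) + \a$.

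First, I would pin down the shape of $\b$. Since $\b_1 = 1$, condition (C1)(i) forces $\b$ to contain $(1^{s+1},1) = (1^{s+2})$, and no part of $\b$ can exceed $\b_1 = 1$; hence $\b = (1^t)$ for some $t \geq s+2$. Consequently,
\[
\nu = (1^t) + (s^s) + \a, \qquad \nu_{(s)} = (1^{t-s}), \qquad t - s \geq 2.
\]

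Next, I would verify that every single-column partition $(1^m)$ with $m \geq 2$ is corner-symmetric in the sense of Definition \ref{c-s}. The only outer corner $(k,j)$ of $(1^m)$ with $k > 1$ is $(m+1,1)$. Taking $\eta_{m+1,1} = (1^{m-1})$ (which is non-empty because $m \geq 2$), we get $(1^m)/(1^{m-1}) = (1)$, a non-skew self-conjugate partition; it contains the box $(m,1) = (k-1,j)$, and the condition regarding the first $j-1 = 0$ columns is vacuous. Thus $\nu_{(s)}$ is corner-symmetric, so Proposition \ref{nsp-type2} applies and yields the claim.

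There is no real obstacle here; the only point requiring attention is checking the corner-symmetry definition in this degenerate situation and confirming that the bound $t - s \geq 2$ (which excludes the pathological $m = 1$ case where $(1)$ fails to be corner-symmetric) is ensured by Lemma \ref{properties of beta}(iii). Both are immediate from the hypothesis $i = 1$ together with condition (C1).
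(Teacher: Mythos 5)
Your proposal is correct and matches the paper's (implicit) argument: the corollary is stated as an immediate consequence of Proposition \ref{nsp-type2}, and the only content is exactly what you verify, namely that $\b=(1^t)$ with $t\geq s+2$ forces $\nu_{(s)}=(1^{t-s})$ with $t-s\geq 2$, which is corner-symmetric via $\eta_{t-s+1,1}=(1^{t-s-1})$. Your observation that the two cases in the proof of Proposition \ref{nsp-type2} exhaust all outer corners other than $(s+1,2)$ is also the intended reading.
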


We remark that  the proof of Proposition \ref{nsp-type2} shows that if we can prove that a  partition  of type 1 is not covered by any partition  not satisfying (C2), then  it follows that a partition of type 2 is not covered by any partition not satisfying (C2). Therefore, establishing the Lex-minimality conjecture would prove the \textit{only if} part of Conjecture \ref{conjecture} for partitions of type 1 and 2. 


\section{Non-Type $1$ or $2$ Partitions}

In this section we consider some partitions $\nu$ such that neither $\nu$ nor $\nu'$ is of type $1$ or $2$, \textit{i.e.}, partitions not satisfying (C1). The next  proposition sheds light on the necessity of $\a\neq \emptyset $ in the definition of type $2$ partitions.

\medskip

\begin{proposition} \label{squareplusbeta}Let $\nu=\beta+(s^s)$ with $s\geq 1$, $\beta_1=i\geq 0$ and such that $\b$ contains  $(i^{s+1}, i, i-1, i-2, \ldots, 1)$. Then, $\nu$ is not covered by $\mu=\b+(s^s, 1)$. Moreover, $\nu$ is not covered by $\mu=\b+(s^s)+(1)$ either.\end{proposition}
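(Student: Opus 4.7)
The plan is to exhibit a single partition $\l$ whose coefficient in $s_{\m'}s_{\m^c}-s_{\n'}s_{\n^c}$ is strictly negative for each of the two candidate covers. Using the decomposition of Proposition~\ref{sym} with $\eta=\b$, $\gamma=\b'$, and $\sigma=\emptyset$, I would take $\l=\b^c+\b'$; since $\b\subseteq\n\subseteq\m$, the containment $\m^c\subseteq\n^c\subseteq\l$ holds, so the coefficients of $s_\l$ in both products are well defined. As $\n/\b$ and each of the two candidate shapes $\m/\b$ will turn out to be straight partitions, the identity $s_{D^*}=s_D$ lets me rewrite the 3-fold coefficients of Proposition~\ref{sym} as ordinary Littlewood--Richardson coefficients.

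First, I would verify that $\langle s_\l, s_{\n'}s_{\n^c}\rangle=1$. The hypothesis that $\b$ contains $(i^{s+1}, i, i-1, \ldots, 1)$ quickly yields $\n'=(\b'_1,\ldots,\b'_i,s^s)$, so $\n'/\b'=(s^s)$; also $\n/\b=(s^s)$. Hence the coefficient equals $c^{\n'}_{\b',(s^s)}=|LR((s^s),(s^s))|=1$, the unique filling being the superstandard tableau of the $s\times s$ square.

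For $\m=\b+(s^s,1)$, direct computation gives $\m/\b=(s^s,1)$ and $\m'/\b'=(s+1,s^{s-1})$, so I would show $|LR((s+1,s^{s-1}),(s^s,1))|=0$. The type $(s^s,1)$ contains exactly one label $s+1$. I would split on its placement: either it lies in the isolated cell in row $1$, column $s+1$, or at the bottom of some column $j\leq s$. In both sub-cases, semistandardness and row-weak-increase force the top-right cell of the tableau to carry a label $l\geq 2$ (namely $l=s+1$ in the first sub-case, and $l=k$ for some $k\geq 2$ in the second, where $k$ is the unique label missing from column $j$), while no copy of $l-1$ has yet been read. Thus the reverse reading word opens with $l$ before any $l-1$, violating the lattice permutation condition. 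For $\m=\b+(s^s)+(1)$, we have $\m/\b=(s+1,s^{s-1})$ and $\m'/\b'=(s^s,1)$, and the vanishing $|LR((s^s,1),(s+1,s^{s-1}))|=0$ is an immediate pigeonhole observation: the first column of $(s^s,1)$ contains $s+1$ cells, but the type $(s+1,s^{s-1})$ uses only labels in $\{1,2,\ldots,s\}$, making a strictly increasing column of length $s+1$ impossible.

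In either case the coefficient of $s_\l$ in $s_{\m'}s_{\m^c}-s_{\n'}s_{\n^c}$ will equal $0-1=-1$, proving that $\m$ does not cover $\n$. The main obstacle is the case analysis for $\m=\b+(s^s,1)$, where pinning down the forced placement of the unique label $s+1$ and the resulting structure of row $1$ requires careful bookkeeping; the second case reduces to the one-line pigeonhole argument above.
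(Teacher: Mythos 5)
Your proposal is correct and is essentially the paper's own proof: the paper also takes the single partition $\l=\b^c+\b'$ and observes it has coefficient $1$ in $s_{\n'}s_{\n^c}$ and $0$ in $s_{\m'}s_{\m^c}$ for both candidate covers. You simply supply the verification details that the paper leaves implicit (and your two vanishing statements are immediate anyway, since $\m'/\b'$ and $(\m/\b)^*$ are translates of distinct straight shapes, so the relevant inner product is $\langle s_{(s+1,s^{s-1})},s_{(s^s,1)}\rangle=0$ by orthonormality).
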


\begin{proof} If $\mu=\b+(s^s, 1)$ or $\mu=\b+(s^s)+(1)$, then $\lambda=\beta^c+\beta'$ appears with multiplicity $1$ in $s_{\nu'}s_{\nu^c}$ but does not appear in  $s_{\mu'}s_{\mu^c}$.  (Note that if $\b=\emptyset$, then $\l=(m^m)$ is the square in which we take the complement.) \end{proof}  

\begin{corollary}\label{square-part} Let $\nu=(s^s)$, $s\geq 1$. Then $\nu$ is not covered by any partition $\mu$.
\end{corollary}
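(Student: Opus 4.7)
The plan is to observe that for $\nu=(s^s)$ there are only two partitions $\mu$ with $|\mu|=|\nu|+1$ and $\nu\subseteq\mu$, namely the two obtained by adding a box at one of the two outer corners of the square. Hence to show $\nu$ has no cover, it suffices to rule out both of these candidates.

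First I would identify the two candidates explicitly. The Young diagram of $(s^s)$ has precisely two outer corners, at positions $(s+1,1)$ and $(1,s+1)$. Adding a box at these positions yields $\mu_1=(s^s,1)$ and $\mu_2=(s+1,s^{s-1})$ respectively. Any $\mu\vdash s^2+1$ with $\nu\subseteq\mu$ must therefore equal $\mu_1$ or $\mu_2$.

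Next I would apply Proposition \ref{squareplusbeta} with the degenerate choice $\beta=\emptyset$ and $i=\beta_1=0$. In this case the containment condition ``$\beta$ contains $(i^{s+1},i,i-1,\dots,1)$'' is vacuous since the partition in question is empty when $i=0$, so the hypotheses are satisfied. The proposition then tells us directly that $\nu=\emptyset+(s^s)$ is covered neither by $\emptyset+(s^s,1)=\mu_1$ nor by $\emptyset+(s^s)+(1)=\mu_2$.

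As a sanity check I would note that this is consistent with the symmetry offered by Proposition \ref{conj}: since $\nu=(s^s)$ is self-conjugate and $\mu_1'=\mu_2$, ruling out $\mu_1$ automatically rules out $\mu_2$, so really only one of the two exclusions needs to be done by hand. Concretely, the proof of Proposition \ref{squareplusbeta} constructs a partition $\lambda=\emptyset^c+\emptyset'=(m^m)$ (the ambient square itself) which appears with coefficient $1$ in $s_{\nu'}s_{\nu^c}=s_{(s^s)}s_{(s^s)^c}$ (via the unique LR filling in which the skew shape $(m^m)/\nu^c=\nu^*=(s^s)$ is filled by its superstandard tableau), but cannot appear in $s_{\mu'}s_{\mu^c}$ for either choice of $\mu$ by a shape count. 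There is no real obstacle here: the statement is essentially immediate from Proposition \ref{squareplusbeta} once one checks that the only covers to consider are the two partitions listed above.
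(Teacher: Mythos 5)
Your proof is correct and follows the paper's own argument exactly: the paper likewise observes that $(s^s,1)$ and $(s^s)+(1)$ are the only two candidates and invokes Proposition \ref{squareplusbeta} with $\beta=\emptyset$. Your additional remarks about the conjugation symmetry and the explicit witness $\lambda=(m^m)$ are accurate elaborations of what the cited proposition already provides.
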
 

\begin{proof} This follows from Proposition \ref{squareplusbeta} with $\b=\emptyset$ and the fact that the only partitions $\mu$ such that $\nu \subseteq \mu$, $|\mu|=|\nu|+1$ are $\mu=(s^s, 1)$ and $\mu=(s^s)+(1)$.  
\end{proof}

We now consider  partitions $\nu$ such that both $\nu$ and  $\nu'$ are corner symmetric. We show that they are neither of type 1 nor of type 2 and are not covered by any partition $\mu$ with $\nu\subseteq \mu$   and $|\mu|=|\nu|+1$. 

\begin{proposition} \label{nu-nu'-cs}  If $\nu$ is a partition such that both $\nu$ and $\nu'$ are corner-symmetric, then $\nu$ is neither of type 1 nor of type 2. 
\end{proposition}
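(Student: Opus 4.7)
The plan is to argue by contrapositive: assuming $\nu$ is of type $1$ or type $2$, I will exhibit an outer corner at which either $\nu$ or $\nu'$ fails to be corner-symmetric, so $\nu$ and $\nu'$ cannot both be corner-symmetric.

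Suppose first that $\nu=\beta$ is of type $1$, and set $i=\beta_1$, so that $\ell(\nu')=i$. I will show that the outer corner $(i+1,1)$ of $\nu'$ fails corner-symmetry. The defining containment $(i,i,i-1,\ldots,1)\subseteq\beta$ gives the row bound $\nu'_k\geq i-k+2$ for $1\leq k\leq i$. Any candidate shape $\nu'/\eta$ at this corner contains $(i,1)$, so its leftmost column is column $1$; since $\nu'/\eta$ must be a non-skew partition and $\eta$ is non-empty, this forces $\nu'/\eta$ to be the suffix of rows $(\nu'_r,\nu'_{r+1},\ldots,\nu'_i)$ for some $2\leq r\leq i$. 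Self-conjugacy of this suffix requires its length to equal its largest part, i.e.\ $\nu'_r=i-r+1$, contradicting $\nu'_r\geq i-r+2$.

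Next, suppose $\nu=\beta+(s^s)+\alpha$ is of type $2$. If $\beta=\emptyset$, then Proposition \ref{lem:conjugate-types}(c) gives that $\nu'$ is of type $1$, and applying the previous paragraph to $\nu'$ shows that $\nu=(\nu')'$ is not corner-symmetric. Otherwise $i=\beta_1\geq 1$, and Lemma \ref{properties of beta}(i) gives $\beta_{s+1}=i$, so $\nu_{s+1}=i$ while $\nu_s\geq i+s$; hence $(s+1,i+1)$ is an outer corner of $\nu$. Since $(s+1,i+1)\notin\nu$, any non-skew partition $\nu/\eta$ containing $(s,i+1)$ and avoiding columns $1,\ldots,i$ must occupy the full portions of rows $r_0,r_0+1,\ldots,s$ lying in columns $\geq i+1$, producing the translated shape
\[
D=\bigl(s+\alpha_{r_0},\,s+\alpha_{r_0+1},\,\ldots,\,s+\alpha_{\ell(\alpha)},\,s,\,\ldots,\,s\bigr)
\]
of length $s-r_0+1$ (using the convention $\alpha_j=0$ for $j>\ell(\alpha)$). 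Self-conjugacy forces $D_1=s+\alpha_{r_0}=s-r_0+1$, so $\alpha_{r_0}=1-r_0$. The only solution with $\alpha_{r_0}\geq 0$ and $r_0\geq 1$ is $r_0=1$ and $\alpha_1=0$, contradicting $\alpha\neq\emptyset$. Thus $\nu$ is not corner-symmetric.

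The main obstacle in each case is the same reduction: using the non-skew requirement together with the prescribed leftmost column to force $\nu'/\eta$ (respectively $\nu/\eta$) into a one-parameter family of shapes. Once that shape analysis is in place, self-conjugacy collapses to an immediate arithmetic contradiction—with the defining staircase inequalities doing the work in the type $1$ case, and the non-emptiness of $\alpha$ doing the work in the type $2$ case.
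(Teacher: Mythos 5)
Your proof is correct and follows essentially the same route as the paper: it tests corner-symmetry at the outer corner $(i+1,1)$ of $\nu'$ in the type 1 case and at $(s+1,i+1)$ of $\nu$ in the type 2 case, deriving the same arithmetic contradictions from the staircase inequality $\nu'_r\geq i-r+2$ and from $\alpha\neq\emptyset$. If anything, your use of the identity $D_1=\ell(D)$ for a self-conjugate partition $D$ is cleaner than the paper's intermediate claim that such a $D$ with smallest part $j$ must end in $j$ rows of length $j$ (which, as literally stated, fails for $D=(4,4,3,2)$), so your version is the more careful one.
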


\begin{proof} First we show that  a partition $\nu$  of type 2 is not corner-symmetric. Let $\nu$ be a partition of type 2, $\nu=\b+(s^s)+\a$, with $\b_1=i$, and consider the outer corner $(s+1,i+1)$. Suppose $\nu$ is corner-symmetric. Then, by Definition \ref{c-s}, there exists a partition $\eta_{s+1,i+1}$ such that $\nu/ \eta_{s+1,i+1}$ is a self-conjugate (non-skew) partition containing the box $(s, i+1)$. Because of the shape of $\nu$, the partition $\nu/ \eta_{s+1,i+1}$ has last part equal to $s$ and, in order  to be self-conjugate,  it must end in $s$ parts of length $s$. Since $\a \neq \emptyset$, this is impossible.  Thus $\nu$ is not corner-symmetric. 

Next, we show that, if $\nu$ is a partition such that  $\nu'$ is corner-symmetric, then $\nu$ is not of type 1. Suppose that $\nu$ is of type 1 and $\nu'$ is corner-symmetric. Let $j=\b'_i$. Then $\nu'$ has an  outer corner at $(i+1, 1)$. For any partition $\eta_{i+1,1}$ as in Definition \ref{c-s}, the last row of $\nu'/ \eta_{i+1,1}$ has length $j$. This implies that $\nu'/ \eta_{i+1,1}$, and therefore $\nu'$, must end in $j$ rows of length $j$, \textit{i.e.}, $\nu'_{i-j+1}=j$. However, since $\nu$ is of type 1, by Lemma \ref{properties of beta} (ii), we have $\nu'_{i-j+1}\geq i-(i-j+1)+2=j+1$. Thus $\nu$ cannot be of type 1 if $\nu'$ is corner-symmetric.  
\end{proof} 
The next proposition follows directly from Proposition \ref{nu-nu'-cs} and Theorem \ref{cs-nsp}. 

\begin{proposition} If $\nu$ is a partition such that both $\nu$ and $\nu'$ are corner-symmetric, then $\nu$ is not covered by any partition $\mu$ such that $\mu\subseteq \nu$ and $|\mu|=|\nu|+1$.  
\end{proposition}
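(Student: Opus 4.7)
The plan is to reduce the statement to a short argument combining Theorem \ref{cs-nsp}, applied to both $\nu$ and $\nu'$, with the conjugation symmetry of Proposition \ref{conj}. Suppose, for contradiction, some partition $\mu$ with $\nu \subseteq \mu$ and $|\mu| = |\nu|+1$ covers $\nu$. Since $\nu$ is corner-symmetric, Theorem \ref{cs-nsp} immediately forces $\mu = \nu+(1)$; that is, the extra box must lie in position $(1, \nu_1+1)$.

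Next I will exploit the hypothesis on $\nu'$. By Proposition \ref{conj}, $\mu$ covers $\nu$ if and only if $\mu'$ covers $\nu'$. Applying Theorem \ref{cs-nsp} to the corner-symmetric partition $\nu'$ then forces $\mu' = \nu'+(1)$, which under conjugation translates to $\mu = (\nu, 1)$; the extra box must lie in position $(\ell(\nu)+1, 1)$.

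These two conclusions are incompatible whenever $\nu$ is non-empty: the equality $(1, \nu_1+1) = (\ell(\nu)+1, 1)$ would require $\nu_1 = 0$ and $\ell(\nu) = 0$, i.e., $\nu = \emptyset$. For any $\nu$ having outer corners (hence in particular for any $\nu$ that could be covered nontrivially), this contradiction rules out the existence of $\mu$, so no cover exists.

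Proposition \ref{nu-nu'-cs} enters the picture only to frame the result: it tells us that $\nu$ is neither of type $1$ nor of type $2$, so $\nu$ does not satisfy \text{\emph{(C1)}}, and hence the absence of a cover is exactly what Conjecture \ref{conjecture} predicts. All the technical work is already contained in Theorem \ref{cs-nsp}, and the only subtlety worth flagging is the edge case $\nu = \emptyset$, which I would either exclude explicitly or dispose of by noting that an empty $\nu$ has no outer corner $(k,j)$ with $k > 1$ to test against, making the statement vacuous.
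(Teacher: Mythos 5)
Your argument is correct and matches the paper's approach: the paper gives no explicit proof beyond citing its ingredients, but the intended argument is exactly yours --- Theorem \ref{cs-nsp} applied to the corner-symmetric $\nu$ forces $\mu=\nu+(1)$, Theorem \ref{cs-nsp} applied to the corner-symmetric $\nu'$ together with Proposition \ref{conj} forces $\mu=(\nu,1)$, and these are incompatible, while Proposition \ref{nu-nu'-cs} only serves to reconcile the conclusion with Conjecture \ref{conjecture}. One small correction: for $\nu=\emptyset$ the corner-symmetry hypotheses are vacuously \emph{satisfied} (there are no outer corners $(k,j)$ with $k>1$), so the statement is not vacuous there and that case must simply be excluded, as the paper implicitly does.
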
 

\begin{corollary} If $\nu$ is self-conjugate, then $\nu$ is not covered by any partition $\mu$ such that $\mu\subseteq \nu$ and $|\mu|=|\nu|+1$. 
\end{corollary}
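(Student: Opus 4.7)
The plan is to obtain this immediately from the preceding proposition. Since $\nu$ is self-conjugate, $\nu=\nu'$, so the hypothesis that both $\nu$ and $\nu'$ are corner-symmetric collapses to the single requirement that $\nu$ itself be corner-symmetric.

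The substantive work is therefore to verify that every self-conjugate partition is corner-symmetric. Fix an outer corner $(k,j)$ of $\nu$ with $k>1$. Self-conjugacy forces $(j,k)$ to be an outer corner as well, and a short computation (comparing row lengths and column lengths at position $j$) gives $\nu_j=\nu'_j=k-1$. In the generic case $k\neq j$, I may assume $k>j$ (the case $j>k$ is handled identically after interchanging the roles of rows and columns), and I would take $\nu/\eta_{k,j}$ to be the sub-diagram $\{(r,c)\in\nu:\, r\geq j,\ c\geq j\}$. The diagonal reflection $(r,c)\mapsto(c,r)$ is a symmetry of $\nu$ because $\nu=\nu'$, and it preserves this region; hence after translating by $(-(j-1),-(j-1))$ the region becomes a straight self-conjugate partition. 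By construction it contains $(k-1,j)$, since $k-1\geq j$, and clearly has no boxes in the first $j-1$ columns of $\nu$.

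The main obstacle will be the diagonal case $k=j$, where the sub-diagram above fails to contain the required box $(k-1,k)$. Here I would work inside the arm of $\nu$ above the Durfee square of side $d=k-1$, i.e.\ the partition with parts $\nu_i-d$ for $1\leq i\leq d$. Among the allowable shapes $\tau_i=\nu_{r_0+i-1}-d$ that are realizable as $\nu/\eta_{k,k}$ for some cutoff $r_0\leq d$, the self-conjugacy of $\nu$ translates to the relation $\tau'_j=\nu_{j+d}-r_0+1$; the task is to choose $r_0$ (possibly simply $r_0=d$, giving a single box $\{(k-1,k)\}$ when $(k-1,k)$ is a removable corner of $\nu$) so that $\tau$ is self-conjugate. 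This combinatorial selection inside the arm is the delicate step of the argument.

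Once corner-symmetry of $\nu$ is established, the preceding proposition applies with $\nu'=\nu$ and yields the corollary at once: any $\mu$ covering $\nu$ would be forced to equal both $\nu+(1)$ and $(\nu,1)$ (via Proposition \ref{conj}), which is impossible.
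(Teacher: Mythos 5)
Your reduction---deduce the corollary from the preceding proposition by showing that every self-conjugate partition is corner-symmetric---is exactly the step the paper leaves implicit, but that step is false, and the failure occurs at precisely the points your write-up passes over. Definition \ref{c-s} requires $\eta_{k,j}$ to be \emph{non-empty}. Every non-empty $\nu$ has the outer corner $(\ell(\nu)+1,1)$ at the foot of its first column, and there your ``generic case $k>j$'' region $\{(r,c)\in\nu:\ r\geq j,\ c\geq j\}$ is all of $\nu$, i.e.\ $\eta_{k,1}=\emptyset$, which is disallowed. For $j=1$ one instead needs a non-empty initial block of rows whose removal leaves a self-conjugate partition, and this is impossible for, e.g., $\nu=(s^s)$ (every candidate $\nu/\eta$ is a proper subrectangle $(s^t)$, $t<s$, never self-conjugate) and for $\nu=(3,3,2)$ (neither $(3,2)$ nor $(2)$ is self-conjugate). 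So ``self-conjugate $\Rightarrow$ corner-symmetric'' is simply not true, and no cleverer choice of $\eta_{k,j}$ can repair it; your construction is valid only for corners with $2\leq j<k$.

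Two further points. The case $j>k$ is not ``handled identically after interchanging rows and columns'': Definition \ref{c-s} is not conjugation-symmetric (it demands that $\nu/\eta_{k,j}$ contain the box \emph{above} the corner and avoid the first $j-1$ \emph{columns}), and the transposed region $\{r\geq k,\ c\geq k\}$ both misses $(k-1,j)$ and meets columns $k,\dots,j-1$; what is actually required there is an $r_0$ making $(\nu_{r_0}-j+1,\dots,\nu_{k-1}-j+1)$ self-conjugate, a problem of the same delicate type as your unresolved diagonal case---and that case can also genuinely fail (e.g.\ $\nu=(5,5,2,2,2)$ at its outer corner $(3,3)$). The upshot is that the corollary needs a separate argument for self-conjugate partitions that are not corner-symmetric: squares are disposed of by Proposition \ref{squareplusbeta} and Corollary \ref{square-part}, but shapes such as $(3,3,2)$ are not handled by any result proved in the paper. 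This appears to be a gap in the paper itself; your proof inherits it rather than closing it.
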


Notice that a partition $\nu$ such that both $\nu$ and $\nu'$ are corner-symmetric is not necessarily self-conjugate. For example $\nu=(6,5,5,5,4,4,3,3,3)$ is corner-symmetric and so is $\nu'$. However, $\nu$ is not self-conjugate. 

\begin{corollary} The staircase partition $\delta_i=(i, i-1,   \ldots, 2,1)$, with $i\geq 1$,  is not covered by any partition.
\end{corollary}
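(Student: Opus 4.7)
The plan is to derive this immediately from the preceding corollary, which asserts that any self-conjugate partition is not covered by any partition. The only thing to verify is that the staircase partition $\delta_i = (i, i-1, \ldots, 2, 1)$ is self-conjugate. This is transparent from its shape: for every $j$ with $1 \le j \le i$, the $j$-th column of $\delta_i$ runs from row $1$ to row $i - j + 1$ and hence contains exactly $i - j + 1$ boxes. Therefore the conjugate shape has parts $(i, i-1, \ldots, 2, 1) = \delta_i$, so $\delta_i' = \delta_i$. Invoking the preceding corollary concludes the argument.

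If one preferred an approach going directly through the proposition on corner-symmetric pairs rather than through its corollary, one would need to check that $\delta_i$ (and hence also $\delta_i' = \delta_i$) is corner-symmetric. For $i \ge 2$ this is not difficult: for each outer corner $(r, i-r+2)$ with $2 \le r \le i$, one can take $\eta_{r, i-r+2}$ to be the partition obtained from $\delta_i$ by removing the top-right staircase of shape $\delta_{r-1}$ sitting in rows $1, \ldots, r-1$ and columns $i-r+2, \ldots, i$; the resulting skew shape $\delta_i / \eta_{r, i-r+2}$ is the straight self-conjugate partition $\delta_{r-1}$, which contains the required box $(r-1, i-r+2)$ and avoids the first $i-r+1$ columns of $\delta_i$. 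For the remaining outer corner $(i+1, 1)$, the choice $\eta_{i+1, 1} = (i)$ yields $\delta_i / (i) = \delta_{i-1}$, which is again self-conjugate.

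I expect essentially no obstacle in this argument: the statement reduces in one line to the preceding corollary, and the only ingredient actually needed is the self-conjugacy of $\delta_i$, which is immediate from the description of the shape. The alternative corner-symmetry route requires a brief case-by-case description of the witnesses $\eta_{k,j}$, but it is also routine once the top-right staircase decomposition is observed.
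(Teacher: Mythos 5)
Your proposal is correct and matches the paper's (implicit) argument exactly: the corollary is stated without proof because it follows immediately from the preceding corollary on self-conjugate partitions once one observes that $\delta_i' = \delta_i$, which is precisely your one-line verification. The alternative corner-symmetry route you sketch is a valid (if unnecessary) elaboration for $i\geq 2$, essentially unwinding how the self-conjugate corollary is itself derived.
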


\section{Final Remarks}

The proof of the main conjecture in complete generality  will likely involve some new combinatorial ideas. As the proofs of Theorem \ref{algorithm} and Theorem \ref{algorithm2} suggest, trying to match tableaux to show Schur-positivity seems to involve complicated insertion algorithms. The larger the width of the shape $\b$, the more exception rules need to be introduced. In an attempt to prove the main conjecture we have also tried (unsuccessfully) using the Jacobi-Trudi identity (see \cite{stanleyEC2}) and, separately, the Pl\"ucker relations (see \cite{K}). \medskip

We note that if $\nu$ and $\mu$ are such that $\nu \subseteq \mu$ and $|\mu|=|\nu|+1$, then McNamara's necessary conditions for Schur-positivity \cite{PM} for $s_{\m'}s_{\m^c}-s_{\nu'}s_{\nu^c}$ are satisfied. The particular cases of Conjecture \ref{conjecture} proved in this article provide another set of examples showing that the conditions are not sufficient. \medskip

\end{document}